\documentclass{amsart}
%
\usepackage{mathrsfs,amssymb,multirow,setspace,color,amsmath}
\usepackage[a4paper, total={7in, 9in}]{geometry}
\theoremstyle{plain}
\usepackage{graphicx}
\usepackage{mathtools}

\newtheorem{theorem}{Theorem}[section]
\newtheorem{lemma}[theorem]{Lemma}
\newtheorem{proposition}[theorem]{Proposition}
\newtheorem{corollary}[theorem]{Corollary}

\theoremstyle{definition}

\newtheorem{example}[theorem]{Example}

\theoremstyle{remark}
\newtheorem{remark}[theorem]{Remark}

\input xy
\xyoption{all}

\def\c{{\mathcal{P}_E(G)}}
\def\dd{\mathcal{P}_E^{**}(G)}
\def\po{\mathcal{P}(G)}
\def\ppo{\mathcal{P}^{**}(G)}
\def\t{\mathcal{T}(G)}
\def\del{\Delta(G)}

\def\m{\mathcal{M}(G)}

\begin{document}
\title[Line Graph of Power Graphs of Finite Groups]{On Finite groups whose power graphs are line graphs}



\author[Parveen, Jitender Kumar]{Parveen, $\text{JITENDER KUMAR}^{^*}$}
\address{$\text{}^1$Department of Mathematics, Birla Institute of Technology and Science Pilani, Pilani-333031, India}
\email{p.parveenkumar144@gmail.com,jitenderarora09@gmail.com}

\begin{abstract}
S. Bera (Line graph characterization of power graphs of finite nilpotent groups, \textit{Communication in Algebra}, 50(11), 4652-4668, 2022) characterized finite nilpotent groups whose power graphs and proper power graphs are line graphs. In this paper, we extend the results of above mentioned paper to arbitrary finite groups. Also, we correct the corresponding result  of the proper power graphs of dihedral groups. Moreover, we classify all the finite groups whose enhanced power graphs are line graphs. We classify all the finite nilpotent groups (except non-abelian $2$-groups) whose proper enhanced power graphs are line graphs of some graphs. Finally, we determine all the finite groups whose power graphs, proper power graphs, enhanced power graphs and proper enhanced power graphs are the complement of line graphs, respectively. 
\end{abstract}
\subjclass[2020]{05C25, 20D15}

\keywords{Power graph, enhanced power graph, line graph, nilpotent groups. \\ * Corresponding Author}

\maketitle

\section{Historical background}
The study of graphs associated to algebraic structures is a large research area and one of the important topic of research of algebraic graph theory. Such study provides an interplay between algebra and graph theory. The study of graphs associated to groups have been studied extensively because they have valuable applications and related to automata theory  
(see \cite{kelarev2003graph, kelarev2004labelled,a.kelarev2009cayley}). Graphs associated to groups, viz: Cayley graphs, power graphs, commuting graphs, enhanced power graphs, prime graphs, intersection graphs etc., have been studied by various researchers. Kelarev \emph{et. al} \cite{a.kelarev2000groups} introduced the notion of power graphs. The \emph{power graph} $\mathcal{P}(G)$ of a group $G$ is a simple undirected graph with vertex set $G$ such that two vertices $a$ and $b$ are adjacent if one is a power of the other or equivalently: either $a \in  \langle b\rangle$ or $b \in \langle a\rangle$.  Cameron \cite{a.Cameron2010} proved that two finite groups which have isomorphic power graphs have the same number of elements of each order. Further, Cameron \emph{et. al} \cite{a.Cameron2011} showed that two finite abelian groups are isomorphic if and only if their power graphs are isomorphic. A graph is said to be $\Gamma$-free if it has no induced subgraph isomorphic to $\Gamma$.  Doostabadi \emph{et. al} \cite{a.doostabadiforbidden} characterized all the finite groups whose power graphs are $K_{1,3}$-free, $K_{1,4}$-free or $C_4$-free. Power graphs of groups with certain forbidden subgraphs such as split, threshold, chordal and cograph have been investigated in \cite{a.MannaForbidden2021}.  For more results on power graphs of groups, we refer the reader to \cite{a.kelarev2000groups,a.powergraphsurvey} and references therein. The dominating vertices of a graph are the one which are adjacent to all other vertices of the graph. The study of connectedness of the graphs obtained by deleting dominating vertices becomes important and interesting. Proper power graph $\ppo$ (is the graph obtained from $\po$ after deleting its dominating vertices) of a group $G$ is also studied in the literature. The connectivity of proper power graphs for certain groups including nilpotent groups have been studied in \cite{a.doostabadi2015connectivity}. Further, Cameron and Jafari \cite{a.cameron2020connectivity} discussed the connectivity of  proper power graph of an arbitrary finite group and characterize all groups whose power graphs have finite independence number. They showed that a group whose proper power graph is connected must be either a torsion group or a torsion-free group. Also, they classify those groups whose power graph is dominatable.

In order to study how close the power graph is to the commuting graph of a finite group $G$, Aalipour \emph{et. al} \cite{a.Cameron2016} introduced the \emph{enhanced power graph}.  The \emph{enhanced power graph} $\c$ of a group $G$ is a simple undirected graph with vertex set $G$ and two vertices $x$ and $y$ are adjacent if $x,y\in \langle z \rangle$ for some $z\in G$. Equivalently, two vertices $x$ and $y$ are adjacent in $\c$ if and only if $\langle x,y\rangle$ is a cyclic subgroup of $G$. Note that the power graph $\po$ is a spanning subgraph of the enhanced power graph $\c$. Bera and Bhuniya \cite{a.Bera2017} studied the interconnection between algebraic properties of the group $G$ and graph theoretic properties of its enhanced power graph $\c$. They proved that the enhanced power graph $\c$ is Eulerian if and only if $G$ is of odd order. Also, they characterized the abelian groups and non-abelian $p$-groups having dominatable enhanced power graphs. Together with certain graph theoretic invariants such as minimum degree, independence number, strong metric dimension, matching number etc., Panda \emph{et al.} \cite{a.panda2021enhanced} studied perfectness of the enhanced power graphs of certain groups including finite abelian $p$-groups. Zahirovi\'{\rm c} \emph{et al.} \cite{a.zahirovic2020study} proved that two finite abelian groups are isomorphic if and only if their enhanced power graphs are isomorphic. Also, they supplied a characterization of finite nilpotent groups whose enhanced power graphs are perfect. Enhanced power graphs of groups with certain forbidden subgraphs such as split, threshold, chordal and cograph have been investigated in \cite{a.ma2021forbidden}. A detailed list of results and open problems related to enhanced power graph of a group can be found in \cite{a.masurvey2022}. Moreover, the proper enhanced power graph (is the graph obtained from $\c$ after deleting its dominating vertices) is also studied in the literature. Bera \emph{et al.} \cite{a.bera2022dominating} classified all nilpotent groups whose proper enhanced power graph is connected and calculated their diameter. Moreover, they determined the domination number of proper enhanced power graphs of finite nilpotent groups. Bera \emph{et. al} \cite{a.bera2021connectivity} 
 computed the number of connected components of proper enhanced power graph. Moreover, they studied the connectivity of proper enhanced power graphs of certain non-abelian groups.

The \emph{line graph} $L(\Gamma)$ of the graph $\Gamma$ is a graph whose vertex set is all the edges of  $\Gamma$ and two vertices of $L(\Gamma)$ are adjacent if they are incident in $\Gamma$.
 An example of $L(\Gamma)$ of the graph $\Gamma$ is shown  in Figure \ref{arbitary line graph}. Line graphs are described by nine forbidden subgraphs (cf. Theorem \ref{induced lemma}). Recently, Bera \cite{a.bera2022} classified all the finite nilpotent groups whose power graphs and proper power graphs are line graphs. Motivated by the work of \cite{a.bera2022}, in the present paper, we intend to study the line graph of certain power graphs associated to finite groups, viz: power graph, proper power graph, enhanced power graph, proper enhanced power graph. In order to extend the results of \cite{a.bera2022}, we study the following problems.

\begin{itemize}
    \item Classification of finite groups $G$ such that $\del \in \{\po, \ppo, \c, \dd \} $ is a line graph.
     \item Classification of finite groups $G$ such that $\del \in \{\po, \ppo, \c, \dd \} $ is the complement of a line graph.
\end{itemize}

The main results of this manuscript are stated in Section 3. 

\section{Preliminaries}
A graph $\Gamma$ consists of a vertex set $V(\Gamma)$ and an edge set $E(\Gamma)$, where $E(\Gamma)$ is an unordered subset of $V(\Gamma) \times V(\Gamma)$. If $\{u,v\} \in E(\Gamma)$, then we say $u$ and $v$ are \emph{adjacent} and we write as $u \sim v$.  Otherwise, we write $u \nsim v$. If $\{u,v\} \in E(\Gamma)$ then the vertices $u$ and $v$ are called \emph{endpoints} of the edge $\{u,v\}$. Two edges $e_1$ and $e_2$ are said to be \emph{incident} if they have a common endpoint. An edge $\{u,v\}$ is called a \emph{loop} if $u=v$. A graph without loops or repeated edges is called \emph{simple graph}. Throughout this paper, we are considering only finite simple graphs. Let $\Gamma$ be a graph. The set of all vertices adjacent to a vertex $u$ is called \emph{neighbours} of $u$ in $\Gamma$, and it is denoted by $N(u)$, $N[u]= N(u)\cup \{u\}$. The \emph{degree} of a vertex $u$ in a graph $\Gamma$ is the cardinality of $N(u)$ in $\Gamma$. A \emph{subgraph} of a graph $\Gamma$ is a graph $\Gamma'$ such that $V(\Gamma')\subseteq V(\Gamma)$ and $E(\Gamma')\subseteq E(\Gamma)$. If $V(\Gamma')= V(\Gamma)$, we call $\Gamma '$ a \emph{spanning subgraph} of $\Gamma$. A  subgraph $\Gamma'$ of $\Gamma$ is an \emph{induced subgraph} by a set $X$ if $V(\Gamma')=X$ and two vertices of $\Gamma'$ are adjacent if they are adjacent in $\Gamma$. A vertex $u$ is said to be a \emph{dominating vertex} of a graph $\Gamma$ if $u$ is adjacent to all other vertices $\Gamma$, and it is denoted by $\mathrm{Dom}(\Gamma)$. A graph $\Gamma$ is called \emph{complete}  if every vertex of $\Gamma$ is a dominating vertex, and the complete graph on $n$ vertices is denoted by $K_n$.  A graph $\Gamma$ is said to be \emph{bipartite} if $V(\Gamma)$ can be partitioned into two subsets such that no two vertices in the same partition subset are adjacent. A \emph{complete bipartite} graph is a bipartite graph such that every vertex in one part is adjacent to all the vertices of the other part. A \emph{complete bipartite graph} with partition size $m$ and $n$ is denoted by $K_{m, n}$. A complete bipartite graph $K_{1,n}$ is called a \emph{star graph}.  
  \begin{figure}[ht]
    \centering
    \includegraphics[scale=.9]{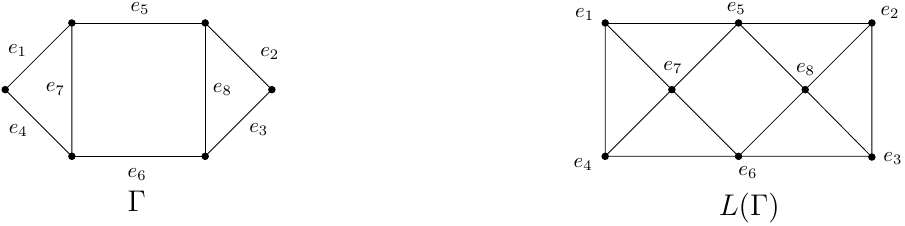}
    \caption{A graph $\Gamma$ and its line graph $L(\Gamma)$}
    \label{arbitary line graph}
\end{figure}
The \emph{complement} of a graph $\Gamma$ is a graph $\overline{\Gamma}$ such that $V(\overline{\Gamma})= V(\Gamma)$ and two vertices are adjacent in $\overline{\Gamma}$ if and only if they are not adjacent in $\Gamma$.
A \emph{path} of length $r$ from $u$ to $v$ in a graph is a sequence of $r+1$ distinct vertices starting with $u$ and ending with $v$ such that consecutive vertices are adjacent. If there is a path between any two vertices of a graph, then $\Gamma$ is \emph{connected}, otherwise \emph{disconnected}. A maximal connected subgraph $\Gamma'$, of a graph $\Gamma$, is called \emph{component}. A \emph{path graph} is a connected graph having at least $2$ vertices and it has two (terminal) vertices that have degree $1$, while all other vertices have degree $2$. We denote by $P_n$, a path graph on $n$ vertices. Let $\Gamma_1,\ldots , \Gamma_m$ be $m$ graphs such that $V(\Gamma_i)\cap V(\Gamma_j)= \phi$, for $i\neq j$. Then $\Gamma =\Gamma_1 \cup \cdots \cup \Gamma_m$ is a graph with vertex set  $V(\Gamma_1) \cup \cdots \cup V(\Gamma_m)$ and edge set $E(\Gamma_1) \cup \cdots \cup E(\Gamma_m)$. Let $\Gamma_1$ and $\Gamma _2$ be two graphs with disjoint vertex set, the \emph{join} $\Gamma_1 \vee \Gamma_2$ of $\Gamma_1$ and $\Gamma_2$ is the graph obtained from the union of $\Gamma_1$ and $\Gamma_2$ by adding new edges from each vertex of $\Gamma_1$ to every vertex of $\Gamma_2$.  Two graphs $\Gamma_1$ and $\Gamma _2$ are \emph{isomorphic} if there is a bijection, $f$ from $V(\Gamma _1)$ to $V(\Gamma _2)$ such that if $u\sim v$ in $\Gamma _1$ if and only if $f(u)\sim f(v)$ in $\Gamma_2$.

Next lemma is very important relation for characterization of line graph.
\begin{lemma}{\rm \cite{a.beineke1970}}{\label{induced lemma}}
    Let $\Gamma$ be a graph. Then $\Gamma$ is the line graph of some graph if and only if none of the nine graphs in $\mathrm{Figure \; \ref{fig line graph}}$  is an induced subgraph of $\Gamma$.
\end{lemma}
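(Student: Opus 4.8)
This is Beineke's forbidden-subgraph characterization of line graphs, and the natural plan is to prove the two implications separately, using Krausz's edge-clique-partition description as the main engine: a graph $\Gamma$ without isolated vertices is a line graph if and only if $E(\Gamma)$ can be partitioned into complete subgraphs so that every vertex lies in at most two of them. I would either quote this or establish it as a short preliminary lemma, since the forward direction supplies the cliques consisting of the edges incident at each vertex, and the reverse direction reconstructs the preimage graph $H$ from the partition.

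For the forward implication, the key observation is that the class of line graphs is closed under taking induced subgraphs: if $\Gamma = L(H)$ and $W\subseteq V(\Gamma)$ corresponds to a set $F$ of edges of $H$, then the induced subgraph $\Gamma[W]$ equals $L(H')$, where $H'$ is the spanning subgraph of $H$ with edge set $F$. Granting this, to show that a line graph can contain none of the nine graphs as an induced subgraph, it suffices to verify that none of the nine graphs is itself a line graph. This reduces to a finite, graph-by-graph check; the model case is the claw $K_{1,3}$, whose three edges would need to be covered by cliques with its centre in at most two of them, which is impossible, and the remaining eight graphs are dispatched by the same Krausz bookkeeping on a fixed small graph.

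The reverse implication is the substantive part. Assuming $\Gamma$ has none of the nine graphs as an induced subgraph, I would construct a Krausz partition of $E(\Gamma)$. The $K_{1,3}$-free condition is responsible for keeping the number of partition-cliques through each vertex down to two, while the eight remaining forbidden graphs ensure that the triangles of $\Gamma$ assemble consistently into those cliques. Following the odd/even triangle distinction (a triangle being odd when some vertex is adjacent to an odd number of its vertices), I would show that any two odd triangles sharing an edge must span a $K_4$, so that the odd triangles aggregate into well-defined maximal cliques. Declaring these cliques, together with the edges lying in no such clique, to be the parts of the partition yields the desired edge-clique-partition, and Krausz's theorem then gives $\Gamma = L(H)$ for the reconstructed $H$.

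The main obstacle is precisely this consistency step in the reverse direction: proving that no vertex is forced into three cliques and that no edge is double-counted. This is where each of the eight non-claw forbidden graphs enters, each excluding one specific bad local configuration of overlapping triangles; organising this verification as a clean structural argument, rather than as an unstructured case enumeration, is the crux of the proof. As the lemma is classical and quoted from \cite{a.beineke1970}, in practice I would cite that argument rather than reproduce the full case analysis.
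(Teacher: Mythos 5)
The paper offers no proof of this lemma: it is a classical result quoted directly from Beineke \cite{a.beineke1970}, exactly as you acknowledge at the end of your proposal. Your outline (Krausz's edge-clique-partition criterion, closure of line graphs under induced subgraphs plus the finite check for the forward direction, and the odd-triangle assembly for the converse) is precisely the standard argument of that cited source, so your approach coincides with the paper's treatment.
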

   \begin{figure}[ht]
    \centering
    \includegraphics[scale=.9]{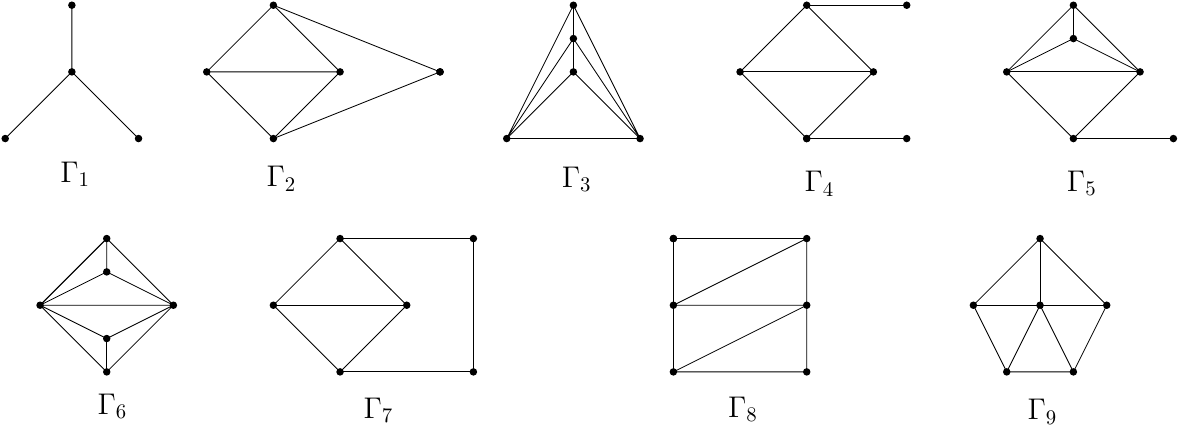}
    \caption{Forbidden induced subgraphs of line graphs}
    \label{fig line graph}
\end{figure}
For characterization of complement of line graph we use the following lemma.
\begin{lemma}{\rm \cite[Theorem 3.1]{a.barati2021}}{\label{complement induced lemma}}
A graph $\Gamma$ is the complement of a line graph if and only if none of the nine graphs $\overline{\Gamma_i}$ of $\mathrm{Figure \; \ref{fig complement_line_graph}}$, is an induced subgraph of $\Gamma$.
   \begin{figure}[ht]
    \centering
    \includegraphics[scale=.9]{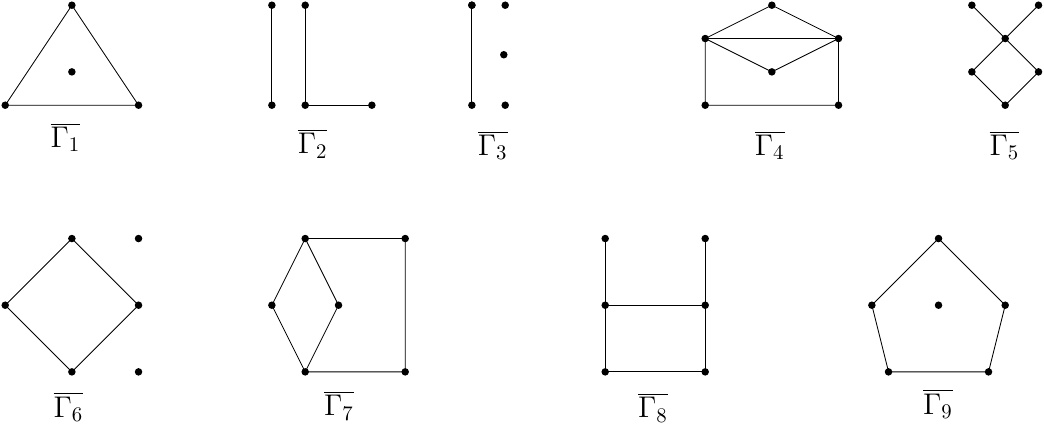}
    \caption{Forbidden induced subgraphs of Complement of line graphs}
    \label{fig complement_line_graph}
\end{figure}
\end{lemma}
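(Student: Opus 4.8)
The plan is to deduce this statement directly from Beineke's characterization (Lemma \ref{induced lemma}) by passing to complements, rather than reproving a forbidden-subgraph theorem from scratch. First I would observe that, by definition and since complementation is an involution ($\overline{\overline{\Gamma}}=\Gamma$), the graph $\Gamma$ is the complement of a line graph if and only if $\overline{\Gamma}$ is itself the line graph of some graph. This reduces the whole problem to deciding when $\overline{\Gamma}$ is a line graph, and for that I may apply Lemma \ref{induced lemma} verbatim to $\overline{\Gamma}$: the graph $\overline{\Gamma}$ is a line graph if and only if none of the nine Beineke graphs $\Gamma_1,\dots,\Gamma_9$ of Figure \ref{fig line graph} occurs as an induced subgraph of $\overline{\Gamma}$.

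The key step is the elementary fact that forming induced subgraphs commutes with complementation. Concretely, for any subset $S\subseteq V(\Gamma)=V(\overline{\Gamma})$, the subgraph of $\overline{\Gamma}$ induced on $S$ is exactly the complement of the subgraph of $\Gamma$ induced on $S$, because two vertices of $S$ are adjacent in $\overline{\Gamma}$ precisely when they are non-adjacent in $\Gamma$. Consequently a fixed graph $\Gamma_i$ embeds as an induced subgraph of $\overline{\Gamma}$ if and only if its complement $\overline{\Gamma_i}$ embeds as an induced subgraph of $\Gamma$. I would record this equivalence as the one technical observation driving the argument.

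Combining the two steps closes the proof: $\Gamma$ is the complement of a line graph if and only if $\overline{\Gamma}$ is a line graph, if and only if none of $\Gamma_1,\dots,\Gamma_9$ is an induced subgraph of $\overline{\Gamma}$, if and only if none of $\overline{\Gamma_1},\dots,\overline{\Gamma_9}$ is an induced subgraph of $\Gamma$, and these are precisely the nine graphs $\overline{\Gamma_i}$ of Figure \ref{fig complement_line_graph}. I do not expect a serious obstacle here: the content is purely the complementation bijection. The only point requiring care is the bookkeeping of verifying that the nine graphs drawn in Figure \ref{fig complement_line_graph} are genuinely the complements of the nine Beineke graphs of Figure \ref{fig line graph}, so that the translated forbidden list matches the figure exactly. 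Since the statement is already available as \cite[Theorem 3.1]{a.barati2021}, one may simply cite it, but the complementation argument above furnishes a short self-contained derivation from Lemma \ref{induced lemma}.
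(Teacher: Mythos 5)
Your proposal is correct, but note that the paper does not prove this lemma at all: it is imported purely by citation from \cite[Theorem 3.1]{a.barati2021}, so there is no internal argument to compare against. Your complementation derivation supplies exactly the missing self-contained proof, and each step is sound: $\Gamma$ is the complement of a line graph if and only if $\overline{\Gamma}$ is a line graph (since complementation is an involution); the subgraph of $\overline{\Gamma}$ induced on a set $S$ is the complement of the subgraph of $\Gamma$ induced on $S$, so a fixed graph $\Gamma_i$ occurs as an induced subgraph of $\overline{\Gamma}$ precisely when $\overline{\Gamma_i}$ occurs as an induced subgraph of $\Gamma$; and applying Lemma \ref{induced lemma} to $\overline{\Gamma}$ then translates Beineke's nine forbidden graphs into their nine complements. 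The single point of bookkeeping, which you correctly flag, is verifying that the graphs drawn in Figure \ref{fig complement_line_graph} really are the complements of those in Figure \ref{fig line graph}; the paper's notation $\overline{\Gamma_i}$ indicates this is how the figure was produced, so nothing further is needed. What your route buys over the paper's bare citation is independence from \cite{a.barati2021} for this ingredient: the lemma becomes a two-line corollary of the Beineke characterization already present in the paper, which is almost certainly how the cited reference proves it as well.
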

We shall use $\Gamma_i$(or $\overline{\Gamma_i})$, for $1\leq i \leq 9$, explicitly in this paper without referring to it. \\
Let $G$ be a finite group.  We write $o(x)$ by the order of an element $x$ in $G$. For a positive integer $n$, $\phi(n)$ denotes the Euler's totient function of $n$. A cyclic subgroup of the group $G$ is called a \emph{maximal cyclic subgroup} if it is not contained in any cyclic subgroup of $G$ other than itself. We denote $\m$ by the set of all maximal cyclic subgroups of $G$ and $\mathcal{M}^{(p)}(G)=\{M\in \m : M \text{ is a }p\text{-group}\}$. Note that $|\m|=1$ if and only if $G$ is a cyclic group. The intersection of all the maximal cyclic subgroups of $G$ is denoted by $\t$.  
The following remark is useful in the sequel.
\begin{remark}{\label{remark maximal}}
Let $G$ be a finite group. Then $G= \bigcup\limits_{M\in \m} M$ and the generators of a maximal cyclic subgroup does not belong to any other maximal cyclic subgroup of $G$.
\end{remark}
\begin{remark}{\label{remark}} 
(i) In the enhanced power graph $\c$, $x\sim y$ if and only if $x,y\in M$ for some $M\in \m$. Consequently, $\mathrm{Dom}(\c)= \t$.\\
(ii) In the power graph $\po$, $x\sim y$ if and only if  $x, y \in M$ for some $M\in \m$ and either $o(x)\vert o(y)$ or $o(y)\vert o(x)$.\\
(iii) Let $M\in \m$ such that $M=\langle m \rangle$. Then $N[m]=M$ in $\c$ as well as in $\po$. Moreover, if $G$ is non-cyclic then $m\notin \t$ and so $m\in V(\dd)$.\\
(iv) Let $M, M'\in\m $ such that $M=\langle m \rangle$ and $M'=\langle m'\rangle$. Then $m\nsim m'$ in $\c$ and so $m\nsim m'$ in  $\Gamma (G)\in \{  \po, \ppo, \dd\}$.

\end{remark}
\begin{lemma}{\rm \cite[Lemma 2.7]{a.kumar2022complement}}{\label{M(G) >2}}
    If $G$ is a finite group, then $| \m |\neq 2$.
\end{lemma}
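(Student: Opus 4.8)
The plan is to argue by contradiction. Suppose, on the contrary, that $|\m| = 2$, say $\m = \{M_1, M_2\}$ where $M_1 = \langle a \rangle$ and $M_2 = \langle b \rangle$ are the two distinct maximal cyclic subgroups of $G$. By Remark \ref{remark maximal}, every element of $G$ lies in some maximal cyclic subgroup, so we obtain the decomposition $G = M_1 \cup M_2$.

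The key structural input is the classical fact that a group can never be written as the union of two of its proper subgroups. I would recall its short proof: if $G = H \cup K$ with both $H$ and $K$ proper, then neither contains the other, so one may pick $h \in H \setminus K$ and $k \in K \setminus H$; the product $hk$ lies in $H \cup K$, but $hk \in H$ forces $k = h^{-1}(hk) \in H$ while $hk \in K$ forces $h = (hk)k^{-1} \in K$, each a contradiction. Applying this to $G = M_1 \cup M_2$, at least one of $M_1, M_2$ must fail to be proper, that is, it equals $G$; hence $G$ is cyclic.

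Finally, I would invoke the characterization noted just before the statement, namely that $|\m| = 1$ precisely when $G$ is cyclic (a cyclic group is its own unique maximal cyclic subgroup, since every cyclic subgroup of it is contained in the whole group). Thus $G$ being cyclic forces $|\m| = 1$, contradicting the assumption $|\m| = 2$. This contradiction establishes that $|\m| \neq 2$ for every finite group $G$.

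Since the whole argument reduces to the union-of-two-proper-subgroups fact together with the already-recorded equivalence for $|\m| = 1$, I do not anticipate any serious obstacle. The only point requiring care is that $M_1$ and $M_2$ are genuinely distinct, so that $G = M_1 \cup M_2$ is truly a union of two \emph{distinct} subgroups rather than a single one; this is exactly what the hypothesis $|\m| = 2$ guarantees.
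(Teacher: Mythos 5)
Your proof is correct: the decomposition $G = M_1 \cup M_2$ supplied by Remark \ref{remark maximal}, the classical fact that no group is the union of two proper subgroups, and the equivalence ``$|\m|=1$ if and only if $G$ is cyclic'' combine to give the contradiction with no gaps. Note that the paper does not prove this lemma internally but cites it from \cite{a.kumar2022complement}; your argument is the standard one and is essentially the proof given in that reference, so there is nothing of substance to compare beyond observing that your write-up is self-contained where the paper relies on a citation.
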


\begin{theorem}{\rm \cite{b.dummit1991abstract}}{\label{nilpotent}}
 Let $G$ be a finite group. Then the following statements are equivalent:
 \begin{enumerate}
     \item[(i)] $G$ is a nilpotent group.
     \item[(ii)] Every Sylow subgroup of $G$ is normal.
    \item[(iii)] $G$ is the direct product of its Sylow subgroups.
    \item[(iv)] For $x,y\in G, \  x$ and $y$ commute whenever $o(x)$ and $o(y)$ are relatively primes.
 \end{enumerate}
 \end{theorem}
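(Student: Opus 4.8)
The plan is to establish the four conditions are equivalent by proving the chain $(i)\Rightarrow(ii)\Rightarrow(iii)\Rightarrow(iv)$ and then closing the loop through $(iv)\Rightarrow(ii)$. The tools I would use are standard: the \emph{normalizer condition} for nilpotent groups (every proper subgroup $H$ satisfies $H\subsetneq N_G(H)$), the fact that the normalizer of a Sylow subgroup is self-normalizing, the internal direct product criterion, and the two closure facts that finite $p$-groups are nilpotent and that a finite direct product of nilpotent groups is nilpotent.

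For $(i)\Rightarrow(ii)$, fix $P\in\mathrm{Syl}_p(G)$ and first I would record that $N_G(N_G(P))=N_G(P)$: if $g$ normalizes $N_G(P)$, then $P$ and $gPg^{-1}$ are both Sylow $p$-subgroups of $N_G(P)$, and since $P\trianglelefteq N_G(P)$ is the unique one, $gPg^{-1}=P$. Thus $N_G(P)$ is self-normalizing, so by the normalizer condition it cannot be proper; hence $N_G(P)=G$ and $P\trianglelefteq G$. For $(ii)\Rightarrow(iii)$, normality makes the Sylow subgroup $P_i$ for each prime $p_i\mid|G|$ unique; for $i\neq j$, coprimality gives $P_i\cap P_j=1$ and normality gives $[P_i,P_j]\subseteq P_i\cap P_j=1$, so distinct Sylow subgroups commute elementwise. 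Then $P_1\cdots P_k$ is a subgroup of order $\prod_i|P_i|=|G|$ meeting each factor trivially, which is exactly the internal direct product condition, so $G\cong P_1\times\cdots\times P_k$. The closure fact $(iii)\Rightarrow(i)$ is then immediate.

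The implication $(iii)\Rightarrow(iv)$ is a short coordinate computation: writing $x=(x_i)$ and $y=(y_i)$ in $G=\prod_iP_i$, if $\gcd(o(x),o(y))=1$ then in each coordinate $o(x_i)$ and $o(y_i)$ are powers of $p_i$ dividing $o(x)$ and $o(y)$ respectively, so they cannot both exceed $1$; hence one of $x_i,y_i$ is trivial in every coordinate and $x,y$ commute. The step I expect to be the main obstacle is closing the cycle with $(iv)\Rightarrow(i)$, which I would route through $(iv)\Rightarrow(ii)$ and then invoke $(ii)\Rightarrow(iii)\Rightarrow(i)$ already obtained. For $(iv)\Rightarrow(ii)$, fix $P\in\mathrm{Syl}_p(G)$; by $(iv)$ every element of order coprime to $p$ commutes with every element of $P$, so $C_G(P)$ contains all such elements and hence a full Sylow $q$-subgroup for each prime $q\neq p$. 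Since any $p$-element centralizing $P$ generates a $p$-group together with $P$ and therefore lies in $P$ by maximality, the Sylow $p$-subgroup of $C_G(P)$ is exactly $Z(P)$; an order count then gives $|C_G(P)|=|Z(P)|\cdot(|G|/|P|)$, whence $|P\,C_G(P)|=|P|\cdot|C_G(P)|/|Z(P)|=|G|$ and $G=P\,C_G(P)$. As $C_G(P)$ centralizes $P$ and $P$ normalizes itself, this forces $P\trianglelefteq G$. The delicate points throughout are the order bookkeeping in this $C_G(P)$ argument and the justification of the normalizer condition in $(i)\Rightarrow(ii)$, which is precisely where the genuine nilpotency input enters.
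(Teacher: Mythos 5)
This statement is quoted in the paper as a known result from \cite{b.dummit1991abstract}; the paper supplies no proof of its own, so there is no internal argument to compare yours against — the comparison is necessarily with the standard textbook treatment. Your proof is correct and follows essentially that classical route: the cycle $(i)\Rightarrow(ii)\Rightarrow(iii)\Rightarrow(i)$ via the normalizer condition, the self-normalizing property of Sylow normalizers, and the internal direct product criterion is exactly the argument in the cited reference, and your $(iii)\Rightarrow(iv)$ coordinate computation together with the $(iv)\Rightarrow(ii)$ centralizer argument correctly brings condition (iv) into the cycle. Two remarks. First, you take the normalizer condition for finite nilpotent groups as a black box; this is legitimate as a standard fact (if $Z_i\leq H$ but $Z_{i+1}\not\leq H$ in the upper central series, any element of $Z_{i+1}\setminus H$ normalizes $H$), but it is the one genuinely nontrivial input you leave unproved, as you yourself flag. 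Second, your $(iv)\Rightarrow(ii)$ step can be streamlined considerably: once you know every Sylow $q$-subgroup of $G$ with $q\neq p$ centralizes $P$, each such subgroup already lies in $N_G(P)$, so the full power of $q$ dividing $|G|$ divides $|N_G(P)|$ for every prime $q\neq p$, while $|P|$ divides $|N_G(P)|$ trivially; since these prime powers are pairwise coprime, $|G|$ divides $|N_G(P)|$, forcing $N_G(P)=G$ and $P\trianglelefteq G$. This avoids the bookkeeping with $Z(P)$, $C_G(P)$, and the product formula entirely, which is where you anticipated the delicacy to lie.
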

\begin{lemma}{\rm \cite[Lemma 2.11]{a.chattopadhyay2021minimal}}{\label{on maximal connectivity}}
Any maximal cyclic subgroup of a finite nilpotent group $G=P_1\times P_2\times \cdots \times P_r$ is of the form $M_1\times M_2\times \cdots \times M_r $, where $M_i$ is a maximal cyclic subgroup of $P_i,  \ (1 \leq i \leq r)$.
\end{lemma}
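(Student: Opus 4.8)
The plan is to exploit the coprimality of the orders arising in the Sylow decomposition. By Theorem~\ref{nilpotent}, since $G$ is nilpotent, $G=P_1\times\cdots\times P_r$ is the direct product of its Sylow subgroups, where $|P_i|$ is a power of a prime $p_i$ and the $p_i$ are pairwise distinct. The single fact I would isolate first is the following elementary consequence of the Chinese Remainder Theorem: for $g=(g_1,\ldots,g_r)\in G$ the orders $o(g_1),\ldots,o(g_r)$ are pairwise coprime, and hence
\begin{equation*}
\langle g\rangle=\langle g_1\rangle\times\cdots\times\langle g_r\rangle .
\end{equation*}
The inclusion $\subseteq$ is clear, and for the reverse inclusion, given any tuple $(g_1^{k_1},\ldots,g_r^{k_r})$ the congruences $k\equiv k_i\pmod{o(g_i)}$ admit a simultaneous solution $k$, whence $g^k=(g_1^{k_1},\ldots,g_r^{k_r})$. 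By the same argument its converse holds: for any cyclic subgroups $C_i=\langle h_i\rangle\le P_i$ the product $C_1\times\cdots\times C_r$ is cyclic, generated by $(h_1,\ldots,h_r)$. Thus $(C_1,\ldots,C_r)\mapsto C_1\times\cdots\times C_r$ is an inclusion-preserving bijection between tuples of cyclic subgroups of the $P_i$ and cyclic subgroups of $G$.

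Granting this correspondence, the argument is short. Let $M$ be a maximal cyclic subgroup of $G$ and write $M=\langle g\rangle$ with $g=(g_1,\ldots,g_r)$. Setting $M_i:=\langle g_i\rangle\le P_i$, the displayed identity gives $M=M_1\times\cdots\times M_r$, which is already of the required shape, so it remains only to check that each $M_i$ is maximal cyclic in $P_i$. I would argue by contradiction: if some $M_i$ were properly contained in a cyclic subgroup $C_i$ of $P_i$, then
\begin{equation*}
M_1\times\cdots\times M_{i-1}\times C_i\times M_{i+1}\times\cdots\times M_r
\end{equation*}
would be, by the converse half of the correspondence, a cyclic subgroup of $G$ properly containing $M$, contradicting the maximality of $M$. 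Hence every $M_i$ is maximal cyclic in $P_i$, as claimed.

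The only real content sits in the first paragraph, namely the inclusion-preserving bijection; once that is in hand, maximality transfers factor-by-factor almost formally. The point demanding the slightest care is that $\langle g\rangle=\langle g_1\rangle\times\cdots\times\langle g_r\rangle$ is genuinely \emph{the} Sylow decomposition of the cyclic group $\langle g\rangle$ and is compatible with the ambient decomposition $G=P_1\times\cdots\times P_r$; this is exactly what guarantees that enlarging one factor $M_i$ inside $P_i$ produces a strictly larger cyclic subgroup of $G$ rather than leaving $M$ unchanged (the bijection is injective and inclusion-preserving, so strict inclusion in one coordinate forces strict inclusion of the products). For completeness one may also record the converse, that any product $M_1\times\cdots\times M_r$ of maximal cyclic subgroups of the $P_i$ is maximal cyclic in $G$, obtained by reversing the same contradiction; this yields the full bijection $\mathcal{M}(G)\leftrightarrow\mathcal{M}(P_1)\times\cdots\times\mathcal{M}(P_r)$ used implicitly elsewhere in the paper.
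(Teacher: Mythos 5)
The paper offers no proof of this lemma at all: it is imported verbatim as Lemma 2.11 of the cited reference of Chattopadhyay, Patra and Sahoo, so there is no internal argument to compare yours against. Your proof is correct and self-contained, and it supplies exactly what the paper takes on faith. The three ingredients are all sound: (1) the CRT identity $\langle g\rangle=\langle g_1\rangle\times\cdots\times\langle g_r\rangle$, valid because each $o(g_i)$ is a power of the distinct prime $p_i$ (including the degenerate case $o(g_i)=1$); (2) injectivity and inclusion-preservation of the correspondence, which follows since each factor $\langle g_i\rangle$ is recoverable from the product as its Sylow $p_i$-subgroup, i.e.\ as the intersection with $P_i$ inside $G$; and (3) the coordinate-swap argument, where properly enlarging one factor $M_i$ to a cyclic $C_i\leq P_i$ yields, by the converse half of the correspondence, a cyclic subgroup of $G$ properly containing $M$, contradicting maximality. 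Your closing observation that the correspondence restricts to a bijection between $\mathcal{M}(G)$ and $\mathcal{M}(P_1)\times\cdots\times\mathcal{M}(P_r)$ is also worth keeping: that stronger form (products of maximal cyclic subgroups of the $P_i$ are maximal cyclic in $G$) is what the paper actually invokes, e.g.\ in the proof of Lemma \ref{one non cyclic}, where subgroups of the form $M_1M_2\cdots M_r$ are asserted to lie in $\mathcal{M}(G)$, and it is the converse direction of the lemma as stated, so it does require the extra sentence you give rather than following from the statement alone.
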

  For $n \geq 2$, the \emph{generalized quaternion group} $Q_{4n}$ is defined in terms of generators and
relations as $Q_{4n} = \langle a, b  :  a^{2n}  = e'' , a^n= b^2, ab = ba^{-1} \rangle.$
Let $G_1$ be a nilpotent group having no Sylow subgroups that are cyclic or generalized quaternion. Suppose $e, e^{\prime \prime}$ are the identities of $G_1$ and $Q_{2^k}$ respectively. Let $\mathcal{D}_1= \left\{\left(e, x, e^{\prime \prime}\right): x \in \mathbb{Z}_n\right\}$ and $\mathcal{D}_2=\left\{(e, x, y): x \in \mathbb{Z}_n, y \in Q_{2^k}\right.$ and $\left.o(y)=2\right\}$.
\begin{theorem}{\rm \cite[Corollary 4.2]{a.bera2022dominating}}{\label{Dominating enhanced power graph}}
    Let $G$ be a finite nilpotent group. Then
$$
\operatorname{Dom}\left(\c \right)= \begin{cases}\{e\} & \text { if } G=G_1, \\ \left\{(e, x): x \in \mathbb{Z}_n\right\} & \text { if } G=G_1 \times \mathbb{Z}_n \text { and } \operatorname{gcd}\left(\left|G_1\right|, n\right)=1, \\ \left\{\left(e, e^{\prime \prime}\right),(e, y)\right\} & \text { if } G=G_1 \times Q_{2^k} \text { and } \operatorname{gcd}\left(\left|G_1\right|, 2\right)=1, \\ \mathcal{D}_1 \cup \mathcal{D}_2 & \text { if } G=G_1 \times \mathbb{Z}_n \times Q_{2^k}  \text { and } \operatorname{gcd}\left(\left|G_1\right|, n\right)=\operatorname{gcd}\left(\left|G_1\right|, 2\right) =\operatorname{gcd}(n, 2)=1 .\end{cases}
$$
\end{theorem}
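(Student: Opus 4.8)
The plan is to reduce the computation of $\mathrm{Dom}(\c)$ to a purely group-theoretic computation of the subgroup $\t$, and then to evaluate $\t$ one Sylow factor at a time. By Remark \ref{remark}(i) we already know $\mathrm{Dom}(\c)=\t$, the intersection of all maximal cyclic subgroups of $G$, so it suffices to determine $\t$ for a finite nilpotent group $G$. Writing $G=P_1\times\cdots\times P_r$ as the direct product of its Sylow subgroups (Theorem \ref{nilpotent}), Lemma \ref{on maximal connectivity} tells us that the maximal cyclic subgroups of $G$ are precisely the subgroups $M_1\times\cdots\times M_r$ with each $M_i$ maximal cyclic in $P_i$; the reverse inclusion—that every such product is itself maximal cyclic—follows since the orders $|M_i|$ are pairwise coprime, so the product is cyclic, and maximality of each factor forces maximality of the product. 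Intersecting over all admissible tuples $(M_1,\ldots,M_r)$, which vary independently in each coordinate, then yields
$$\t=\mathcal{T}(P_1)\times\cdots\times\mathcal{T}(P_r).$$
Thus the entire theorem is reduced to computing $\mathcal{T}(P)$ for a single $p$-group $P$.

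Next I would treat the three possible shapes of a Sylow subgroup separately. If $P$ is cyclic, it is its own unique maximal cyclic subgroup, so $\mathcal{T}(P)=P$. If $P$ is non-cyclic and not generalized quaternion, I claim $\mathcal{T}(P)=\{e\}$: suppose not; then $\mathcal{T}(P)$ contains a subgroup $\langle a\rangle$ of order $p$ lying inside \emph{every} maximal cyclic subgroup. By Remark \ref{remark maximal} every element of $P$—hence every order-$p$ subgroup—lies in some maximal cyclic subgroup $M_0$, and a cyclic $p$-group has a unique subgroup of order $p$; consequently every order-$p$ subgroup of $P$ would coincide with $\langle a\rangle$. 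But a $p$-group possessing a unique subgroup of order $p$ is cyclic or generalized quaternion, contradicting the hypothesis, so $\mathcal{T}(P)=\{e\}$. Finally, if $P=Q_{2^k}$ is generalized quaternion, its unique involution $z$ lies in every cyclic subgroup of even order, whence $\langle z\rangle\subseteq\mathcal{T}(P)$; conversely the maximal cyclic subgroup $\langle a\rangle$ meets each order-$4$ maximal cyclic subgroup $\langle a^i b\rangle$ exactly in $\langle z\rangle$, giving $\mathcal{T}(Q_{2^k})=\langle z\rangle=\{e'',z\}$, a group of order $2$.

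To finish, I would assemble these local computations. Grouping the Sylow factors of $G$ by type, the non-cyclic non-quaternion factors each contribute a trivial $\mathcal{T}$ and collect into $G_1$; the cyclic factors have $\mathcal{T}$ equal to the whole factor and collect into a cyclic group $\mathbb{Z}_n$; and a (necessarily unique) quaternion Sylow $2$-subgroup contributes $\{e'',z\}$ and is recorded as $Q_{2^k}$. Substituting into the product formula for $\t$ then produces exactly $\{e\}$, $\{(e,x):x\in\mathbb{Z}_n\}$, $\{(e,e''),(e,y)\}$, and $\mathcal{D}_1\cup\mathcal{D}_2$ in the four respective cases, the coprimality conditions merely recording that these components lie over distinct primes. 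The main obstacle is the middle case: showing that a non-cyclic, non-quaternion $p$-group has trivial $\t$ rests on the classical classification of $p$-groups with a unique subgroup of order $p$, which is the single external fact the argument genuinely requires; everything else is bookkeeping once the factorization $\t=\mathcal{T}(P_1)\times\cdots\times\mathcal{T}(P_r)$ is established.
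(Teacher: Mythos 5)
This theorem is not proved in the paper at all: it is imported, with a citation, as Corollary 4.2 of Bera and Dey's paper on proper enhanced power graphs of nilpotent groups, so there is no in-paper proof against which to compare your argument; I can only judge it on its own merits, and on those it is correct. Your reduction $\operatorname{Dom}(\mathcal{P}_E(G))=\mathcal{T}(G)$ is exactly Remark \ref{remark}(i); the product formula $\mathcal{T}(G)=\mathcal{T}(P_1)\times\cdots\times\mathcal{T}(P_r)$ is sound because, as you note, Lemma \ref{on maximal connectivity} together with the coprime-orders observation upgrades to a bijection between maximal cyclic subgroups of $G$ and tuples $(M_1,\dots,M_r)$ of maximal cyclic subgroups of the Sylow factors, and these tuples vary independently coordinate-by-coordinate. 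The three local computations are also right: $\mathcal{T}(P)=P$ for $P$ cyclic; $\mathcal{T}(Q_{2^k})=\{e'',z\}$, since the unique involution $z$ lies in every nontrivial cyclic subgroup while $\langle a\rangle\cap\langle a^ib\rangle=\langle z\rangle$; and $\mathcal{T}(P)=\{e\}$ in the remaining case, where your contradiction argument correctly shows that a nontrivial $\mathcal{T}(P)$ would force $P$ to have a unique subgroup of order $p$. The only genuinely nontrivial external input is the one you flag: the classical theorem that a finite $p$-group with a unique subgroup of order $p$ is cyclic or generalized quaternion. That result is not among the preliminaries quoted in this paper, so in a self-contained write-up you should cite it explicitly (it is standard, e.g.\ in Gorenstein's \emph{Finite Groups}); with that citation in place your proof is complete, and the final bookkeeping producing the four cases $\{e\}$, $\{(e,x):x\in\mathbb{Z}_n\}$, $\{(e,e''),(e,y)\}$ and $\mathcal{D}_1\cup\mathcal{D}_2$ is exactly right, the stated coprimality hypotheses being automatic for Sylow factors over distinct primes.
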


\begin{theorem}{\rm \cite[Theorem 2.4]{a.Bera2017}}{\label{complete}}
The enhanced power graph $\c$   of the group $G$  is complete if and only if $G$ is cyclic.
\end{theorem}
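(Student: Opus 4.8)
The plan is to establish the two implications separately: the forward direction is immediate from the definition, while the converse is most cleanly handled by contraposition using the structure of maximal cyclic subgroups.

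First I would dispose of the easy direction. Suppose $G$ is cyclic, say $G = \langle g \rangle$. Then every element of $G$ lies in $\langle g \rangle$, so for any two vertices $x, y$ we have $x, y \in \langle g \rangle$; equivalently $\langle x, y \rangle \leq \langle g \rangle$ is cyclic. Hence $x \sim y$ in $\c$ by the definition of the enhanced power graph, and therefore $\c$ is complete.

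For the converse I would prove the contrapositive: if $G$ is non-cyclic then $\c$ is not complete. Recall from the preliminaries that $|\m| = 1$ if and only if $G$ is cyclic, so a non-cyclic $G$ admits at least two distinct maximal cyclic subgroups, say $M = \langle m \rangle$ and $M' = \langle m' \rangle$. By Remark \ref{remark}(iv), the generators of distinct maximal cyclic subgroups are non-adjacent, that is, $m \nsim m'$ in $\c$. Thus $\c$ possesses a pair of non-adjacent vertices and cannot be complete, which finishes the contrapositive.

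The argument carries essentially no obstacle, since all the real content has been front-loaded into Remark \ref{remark}(iv) together with the characterization $|\m| = 1 \iff G$ is cyclic. An equivalent route for the converse would instead invoke Remark \ref{remark}(i), which identifies $\mathrm{Dom}(\c) = \t$: if $\c$ were complete then every vertex would be dominating, forcing $\t = G$, whence $M = G$ for every $M \in \m$, and so $G$ itself is a (maximal) cyclic subgroup, again contradicting non-cyclicity.
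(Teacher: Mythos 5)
Your proof is correct. One thing to note at the outset: the paper does not actually prove this statement --- it is imported verbatim as Theorem 2.4 of the cited Bera--Bhuniya paper and used as a preliminary --- so there is no internal proof to compare against. Judged on its own, your argument is sound and self-contained modulo the paper's stated preliminaries: the forward direction is immediate from the definition of $\c$, and the contrapositive of the converse correctly combines the fact that $|\m|=1$ if and only if $G$ is cyclic with Remark \ref{remark}(iv) (generators of distinct maximal cyclic subgroups are non-adjacent, which in turn rests on Remark \ref{remark maximal}). Your alternative route via $\mathrm{Dom}(\c)=\t$ is also valid: completeness forces $\t=G$, hence every maximal cyclic subgroup equals $G$. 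It is worth pointing out that the standard proof in the cited source proceeds differently, arguing directly rather than by contraposition: take $x\in G$ of maximal order; for any $y$, completeness gives that $\langle x,y\rangle$ is cyclic, say $\langle x,y\rangle=\langle z\rangle$, and maximality of $o(x)$ forces $o(z)=o(x)$, hence $\langle z\rangle=\langle x\rangle\ni y$, so $G=\langle x\rangle$. That argument needs no apparatus of maximal cyclic subgroups; yours leverages the machinery this paper has already set up, which makes it shorter in context but dependent on the unproved (though elementary) Remarks \ref{remark maximal} and \ref{remark}.
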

\begin{theorem}{\rm \cite[Theorem 2.12]{a.chakrabarty2009undirected}}{\label{complete power graph}}
For a finite group $G$, the power graph $\po$  is complete if and only if $G$ is a cyclic group of order $1$ or $p^m$, for some prime $p$ and for some $m\in \mathbb{N}$.
\end{theorem}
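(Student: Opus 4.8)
The plan is to prove both implications directly from the adjacency condition of $\po$: two distinct elements $a,b \in G$ are adjacent if and only if $a \in \langle b \rangle$ or $b \in \langle a \rangle$. Saying that $\po$ is complete is therefore equivalent to saying that for every pair $a,b \in G$ the cyclic subgroups $\langle a \rangle$ and $\langle b \rangle$ are comparable under inclusion, and the whole argument reduces to analyzing when this comparability property can hold.

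For the sufficiency, suppose first $|G| = 1$; then $\po = K_1$ is trivially complete. Now suppose $G$ is cyclic of order $p^m$. I would use the fact that a cyclic $p$-group possesses a unique subgroup $H_i$ of each order $p^i$ ($0 \le i \le m$), and that these subgroups form a chain under inclusion. Since every element $a$ satisfies $\langle a \rangle = H_i$ for one of these subgroups, any two subgroups $\langle a \rangle, \langle b \rangle$ are comparable; hence $a \in \langle b \rangle$ or $b \in \langle a \rangle$, and so $\po$ is complete.

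For the necessity, assume $\po$ is complete. First I would show that $G$ is cyclic: pick $g \in G$ of maximal order and observe that for any $a \in G$, completeness forces $a \in \langle g \rangle$ or $g \in \langle a \rangle$; in the latter case $o(g) \mid o(a)$ contradicts the maximality of $o(g)$ unless $\langle a \rangle = \langle g \rangle$, so in every case $a \in \langle g \rangle$, giving $G = \langle g \rangle$. It then remains to rule out more than one prime divisor of $n = |G|$. If $p$ and $q$ were distinct primes dividing $n$, the cyclic group would contain an element $a$ of order $p$ and an element $b$ of order $q$; since $p \nmid q$ and $q \nmid p$, neither of $\langle a \rangle, \langle b \rangle$ contains the other, so $a \nsim b$, contradicting completeness. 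Hence $n = 1$ or $n = p^m$.

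The argument is essentially elementary, relying only on the subgroup lattice of cyclic groups. The one step requiring care — the main (mild) obstacle — is the cyclicity argument in the necessity direction, where one must argue that a maximal-order element actually generates $G$; this is where completeness is used in full strength, and it is crucial that the maximality of $o(g)$ excludes the possibility $g \in \langle a \rangle$ with $\langle a \rangle \neq \langle g \rangle$.
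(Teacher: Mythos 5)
Your proof is correct. Note that the paper itself does not prove this statement: it is imported as a black box from \cite[Theorem 2.12]{a.chakrabarty2009undirected}, so there is no in-paper proof to compare against. Your argument is the standard elementary one and it is complete: sufficiency follows because the subgroups of a cyclic $p$-group form a chain under inclusion, so any two cyclic subgroups $\langle a\rangle$, $\langle b\rangle$ are comparable; necessity is handled correctly by the maximal-order element $g$ (where $g\in\langle a\rangle$ forces $o(g)\mid o(a)$, hence $o(a)=o(g)$ and $\langle a\rangle=\langle g\rangle$, so in all cases $a\in\langle g\rangle$ and $G=\langle g\rangle$), followed by the observation that elements of distinct prime orders $p$ and $q$ can never be adjacent, ruling out a second prime divisor of $|G|$.
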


\begin{theorem}{\rm \cite[Theorem 4]{a.cameron2020connectivity}}{\label{dominating power graph}}
    Let $G$ be a finite group. Suppose that $x\in G$ has the property that for all $y\in G$, either $x$ is a power of $y$ or vice versa. Then one of the following holds:
    \begin{itemize}
        \item[(i)] $x=e$;
        \item[(ii)] $G$ is cyclic and $x$ is a generator;
        \item[(iii)] $G$ is a cyclic $p$-group for some prime $p$ and $x$ is arbitrary;
        \item[(iv)] $G$ is a generalized quaternion group and $x$ is order $2$.
    \end{itemize}
\end{theorem}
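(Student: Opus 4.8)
The plan is to read the hypothesis as the statement that $x$ is a dominating vertex of the power graph $\po$, and then exploit the behaviour of $x$ with respect to the maximal cyclic subgroups. First I would dispose of the trivial case $x=e$, which gives (i); so assume $x\neq e$. The first reduction is to show that $x$ lies in \emph{every} maximal cyclic subgroup of $G$, i.e. $x\in\t$. Indeed, if $M=\langle m\rangle\in\m$, then applying the hypothesis to $y=m$ gives either $x\in\langle m\rangle\subseteq M$, or $m\in\langle x\rangle$, in which case $M=\langle m\rangle\subseteq\langle x\rangle$ forces $M=\langle x\rangle$ by maximality, so again $x\in M$. Since $G=\bigcup_{M\in\m}M$ by Remark \ref{remark maximal}, every $y$ then shares a cyclic subgroup with $x$, and by Remark \ref{remark}(ii) the relation ``$x\in\langle y\rangle$ or $y\in\langle x\rangle$'' is equivalent to the divisibility relation ``$o(x)\mid o(y)$ or $o(y)\mid o(x)$''.

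Two consequences drive the rest. (a) For each prime $p\mid |G|$ pick $z$ of order $p$ (Cauchy); comparability forces $z\in\langle x\rangle$ (the alternative $x\in\langle z\rangle$ gives $\langle x\rangle=\langle z\rangle$ since $z$ has prime order). Hence all elements of order $p$ lie in the cyclic group $\langle x\rangle$, so $G$ has a \emph{unique} subgroup of order $p$ for every prime $p\mid|G|$. (b) The same comparison shows every prime dividing $|G|$ divides $o(x)$.

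Next I would split on whether $G$ is cyclic. If $G$ is cyclic of order $n$ and $n$ is a prime power, then $\po$ is complete by Theorem \ref{complete power graph}, every $x$ dominates, and we land in (iii). If $n$ has at least two prime divisors and $x$ is a non-generator, then $d:=o(x)$ is a proper divisor of $n$, so by (b) there is a prime $p$ with $v_p(d)<v_p(n)=:a$; the element $y$ of order $p^{a}$ then satisfies $p^{a}\nmid d$ and, because $d$ carries a second prime by (b), $d\nmid p^{a}$, so $y$ is incomparable to $x$ — contradicting domination. Thus $x$ must be a generator, giving (ii).

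The non-cyclic case is where the real content sits. By (a), $G$ has a unique subgroup of order $p$ for every prime $p$, and the classical classification then identifies $G$ as a generalized quaternion group (the cyclic alternative being excluded). Finally, $x\in\t$, and the intersection of all maximal cyclic subgroups of a generalized quaternion group is its central subgroup $\{e,z\}$ of order $2$; hence $x=z$ and $o(x)=2$, and combined with (b) this even forces $|G|$ to be a $2$-power, so $G\cong Q_{2^{k}}$. This yields (iv). The main obstacle is exactly this classification input: proving that a finite group with a unique subgroup of order $p$ for each prime $p$ must be cyclic or generalized quaternion. It relies on the $p$-group theorem that a $p$-group with a unique subgroup of order $p$ is cyclic for odd $p$ and cyclic or generalized quaternion for $p=2$, together with a normal-complement argument controlling how the Sylow subgroups are assembled; I would invoke this as a known result rather than reprove it.
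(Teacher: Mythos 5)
This statement is not proved in the paper at all: it is quoted as an external result of Cameron and Jafari \cite{a.cameron2020connectivity}, so your proposal has to stand on its own. Most of it does: the reduction to $x\neq e$, the argument that $x$ lies in every maximal cyclic subgroup, consequences (a) and (b), and the treatment of the cyclic case (prime power order giving (iii), otherwise $x$ forced to be a generator, giving (ii)) are all correct.

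The non-cyclic branch, however, rests on a classification statement that is false. A finite group with a unique subgroup of order $p$ for every prime $p$ dividing its order need \emph{not} be cyclic or generalized quaternion: $Q_8\times \mathbb{Z}_3$ has a unique subgroup of order $2$ (the centre of $Q_8$) and a unique subgroup of order $3$ (its normal Sylow $3$-subgroup), yet it is neither. Burnside's unique-minimal-subgroup theorem is a theorem about $p$-groups, and no ``normal-complement argument'' assembles it across distinct primes in the way you assert; your consequence (a) is simply too weak to identify $G$. The missing step is to use the domination hypothesis once more to show that a non-cyclic $G$ must be a $p$-group. Indeed, if two distinct primes $p,q$ divided $|G|$, then $pq\mid o(x)$ by (b); for any $y$ of prime power order the alternative $x\in\langle y\rangle$ is then impossible, so domination forces $y\in\langle x\rangle$; and since an arbitrary $g\in G$ is the product of its prime-power components, each of which is a power of $g$, this gives $g\in\langle x\rangle$, i.e.\ $G=\langle x\rangle$, contradicting non-cyclicity. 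Once $G$ is known to be a non-cyclic $p$-group, (a) together with Burnside's $p$-group theorem gives $G\cong Q_{2^k}$, and your final step ($x\in\mathcal{T}(G)=\{e,z\}$, hence $x=z$ of order $2$) correctly completes case (iv); with this fix the appeal to (b) at the very end becomes unnecessary, since $|G|$ is already a power of $2$.
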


\section{Main results}
The  main results of the manuscript are stated in this section.  In the following three theorems, the results of \cite{a.bera2022} are extended from nilpotent groups to arbitrary finite groups. 
\begin{theorem}{\label{line power graph}}
    Let $G$ be a finite group. Then $\po$ is a line graph of some graph $\Gamma$ if and only if $G$ is a cyclic group of prime power order.
\end{theorem}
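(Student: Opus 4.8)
The plan is to prove the two implications separately, using the completeness criterion of Theorem \ref{complete power graph} for sufficiency and the forbidden–subgraph characterization of Lemma \ref{induced lemma} for necessity. For the \emph{if} part, suppose $G$ is cyclic of prime power order. Then by Theorem \ref{complete power graph} the graph $\po$ is the complete graph $K_{|G|}$, and since $K_n = L(K_{1,n})$ every complete graph is a line graph; hence $\po$ is a line graph. All the work therefore lies in the converse, which I would establish by contraposition: assuming $G$ is \emph{not} cyclic of prime power order, I would exhibit one of the nine graphs $\Gamma_i$ of Figure \ref{fig line graph} as an induced subgraph of $\po$. Two cases arise naturally, according to whether $G$ is cyclic.

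First, the non-cyclic case. Here $|\m| \neq 1$, and by Lemma \ref{M(G) >2} in fact $|\m| \geq 3$, so I can pick three distinct maximal cyclic subgroups with generators $m_1, m_2, m_3$. By Remark \ref{remark maximal} these are three distinct non-identity elements, and by Remark \ref{remark}(iv) they are pairwise non-adjacent in $\po$. Since the identity $e$ is adjacent to every vertex, the set $\{e, m_1, m_2, m_3\}$ induces a copy of the claw $K_{1,3} = \Gamma_1$, whence $\po$ is not a line graph.

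Second, the cyclic case in which $|G| = n$ is not a prime power. Then $n$ has an odd prime divisor $q$ together with a second prime divisor $p \neq q$. In $\mathbb{Z}_n$ I would select the identity $e$ and two distinct generators $g_1, g_2$ (available since $\phi(n) \geq 2$), which are mutually adjacent and adjacent to all vertices; one element $u$ of order $p$; and two distinct elements $v_1, v_2$ of order $q$ (available since $\phi(q) = q-1 \geq 2$). In a cyclic group two elements are adjacent iff one of their orders divides the other (a special case of Remark \ref{remark}(ii)), so $v_1 \sim v_2$ while $u \nsim v_1$ and $u \nsim v_2$. Consequently $\{e, g_1, g_2, u, v_1, v_2\}$ induces a subgraph isomorphic to $K_3 \vee (K_1 \cup K_2)$, which is precisely the power graph $\mathcal{P}(\mathbb{Z}_6)$.

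The hard part is to verify that this six-vertex graph $K_3 \vee (K_1 \cup K_2)$ is not a line graph, equivalently that it is one of the nine graphs $\Gamma_i$. A pure claw argument cannot settle this case, since the graph has independence number $2$ and is therefore $K_{1,3}$-free; isolating the correct six-vertex obstruction is the crux. I would confirm it either by direct comparison with Figure \ref{fig line graph}, or by the following counting argument: if $K_3 \vee (K_1 \cup K_2) = L(\Gamma)$ for a simple graph $\Gamma$ with $6$ edges, then from the degree data $(5,5,5,4,4,3)$ one gets $\sum_v \deg_\Gamma(v) = 12$ and $\sum_v \deg_\Gamma(v)^2 = 38$, and a short check shows that no graphical degree sequence satisfies both constraints, a contradiction. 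Since this induced copy of $\mathcal{P}(\mathbb{Z}_6)$ then contains some $\Gamma_i$, Lemma \ref{induced lemma} shows $\po$ is not a line graph. Combining the two cases yields the necessity and completes the proof.
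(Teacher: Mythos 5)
Your proof is correct, but it takes a genuinely different route from the paper's in one of its two cases. For non-cyclic $G$ your argument coincides exactly with the paper's Proposition \ref{pg and epg proposition}: Lemma \ref{M(G) >2} supplies three maximal cyclic subgroups, and their generators together with $e$ induce a claw by Remark \ref{remark}(iv). The real difference is the cyclic case: the paper never argues it, instead combining that proposition with Theorem 2.1 of \cite{a.bera2022} (cyclic groups being nilpotent), whereas you give a self-contained forbidden-subgraph argument, embedding $K_3\vee(K_1\cup K_2)\cong \mathcal{P}(\mathbb{Z}_6)$ as an induced subgraph and ruling it out as a line graph by counting. Your counting does check out: a preimage $\Gamma$ would have $6$ edges, $\sum_v \deg_\Gamma(v)=12$ and $\sum_v \deg_\Gamma(v)^2=26+2\cdot 6=38$; the only positive-degree sequences meeting both constraints are $(5,2,2,2,1)$, $(5,3,1,1,1,1)$, $(4,4,2,1,1)$ and $(4,3,3,2)$, and none is graphical. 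One inaccuracy should be fixed, though: your parenthetical ``equivalently that it is one of the nine graphs $\Gamma_i$'' is false, and the proposed ``direct comparison with Figure \ref{fig line graph}'' would not succeed, because $K_3\vee(K_1\cup K_2)$ is not one of Beineke's nine \emph{minimal} graphs --- it properly contains $\Gamma_3\cong K_5-e$, induced on $\{e,g_1,g_2,u,v_1\}$ (compare the paper's use of $\Gamma_3$ in the proof of Theorem \ref{arbitrary power graph}). So it is your counting argument (or, more simply, exhibiting that induced $K_5-e$) that closes the case; your final sentence, which only asserts the subgraph \emph{contains} some $\Gamma_i$, is the correct formulation. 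Noticing the $K_5-e$ also simplifies your construction: five vertices suffice ($e$, two generators, one element of order $p$, one of order $q$), so the parity fuss about choosing $q$ odd to get two elements of order $q$ is unnecessary. As for what each approach buys: the paper's treatment is two lines but leaves the theorem resting on the case analysis of \cite{a.bera2022}, while yours makes the result self-contained and produces an explicit obstruction valid for every cyclic group of non-prime-power order.
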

\begin{theorem}{\label{arbitrary power graph}}
    Let $G$ be a finite non-cyclic group which is not a generalized quaternion group. Then $\ppo$ is a line graph if and only if $G$ satisfies the following conditions: \begin{itemize}
        \item[(i)] For each $M\in \m$, we have $\left|M\right| \in\left\{6, p^\alpha\right\}$ for some prime $p$.
        \item[(ii)] $\mathrm{(a)}$  If $M_i, M_j, M_k \in \mathcal{M}^{(2)}(G)$, then $\left|M_i \cap M_j\right| \leq 2$ and $\left|M_i \cap M_j \cap M_k\right|=1$.\\ 
        $\mathrm{(b)}$ If $M_s \in \m \setminus \mathcal{M}^{(2)}(G)$ and $ M_t \in \m$, then $\left|M_s \cap M_t\right|=1$.
    \end{itemize}
\end{theorem}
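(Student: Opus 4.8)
The plan is to verify the forbidden--subgraph criterion of Lemma~\ref{induced lemma}: $\ppo$ is a line graph precisely when none of the nine graphs $\Gamma_1,\dots,\Gamma_9$ occurs as an induced subgraph. First I would fix the vertex set. Since $G$ is non-cyclic and not a generalized quaternion group, Theorem~\ref{dominating power graph} forces $\mathrm{Dom}(\po)=\{e\}$, so $V(\ppo)=G\setminus\{e\}$. Next I would record the local structure coming from Remark~\ref{remark}: two vertices are adjacent exactly when they lie together in some $M\in\m$ and one of their orders divides the other; the induced subgraph on $M\setminus\{e\}$ is therefore the power graph of $\mathbb{Z}_{|M|}$ with the identity deleted; and, crucially, if $M=\langle m\rangle$ then $N[m]=M$, so generators of distinct maximal cyclic subgroups are pairwise non-adjacent. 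These three facts are the backbone of every case.

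For the necessity of (i) I would work inside a single $M$. If $|M|=p^{\alpha}$ then the induced subgraph on $M\setminus\{e\}$ is complete by Theorem~\ref{complete power graph}, hence a line graph; and a direct check shows this induced subgraph is a line graph when $|M|=6$ as well. If instead $|M|$ has at least two distinct prime divisors and $|M|\neq 6$, then $\phi(|M|)\geq 3$, so $M$ contains three generators (pairwise adjacent, as they have equal order) together with two elements $x_1,x_2$ of distinct prime orders (non-adjacent, since these orders are incomparable). The set $\{x_1,x_2,g_1,g_2,g_3\}$ then induces $K_{5}-e$, one of the nine graphs, so $\ppo$ is not a line graph. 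This also explains the exceptional value $6$: it is the only order with two prime divisors for which $\phi=2<3$, so the three generators needed for $K_5-e$ are unavailable.

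For the necessity of (ii) I would exploit the generator description: because $N[m]=M$, the common neighbours of two generators $g_s,g_t$ of distinct $M_s,M_t$ are exactly $(M_s\cap M_t)\setminus\{e\}$, while $g_s\nsim g_t$. Two mechanisms then produce forbidden subgraphs. If some non-identity element lies in three distinct maximal cyclic subgroups, the corresponding three generators are pairwise non-adjacent common neighbours of it, giving an induced claw $\Gamma_1=K_{1,3}$; this yields the triple-intersection conditions, and in particular $|M_i\cap M_j\cap M_k|=1$ in (ii)(a). If $|M_s\cap M_t|\geq 4$ and the overlap is a clique (automatic when one subgroup has prime-power order), then three shared non-identity elements together with $g_s,g_t$ induce $K_5-e$; since the order of $M_i\cap M_j$ is a power of $2$ when $M_i,M_j\in\mathcal{M}^{(2)}(G)$, the only admissible values are $1$ and $2$, giving the bound $|M_i\cap M_j|\leq 2$ in (ii)(a). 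For (ii)(b) I would argue that when $M_s$ is not a $2$-group its internal structure is richer (it is $\mathbb{Z}_6$, or a clique on at least two vertices coming from an odd prime power), and combining a shared prime-order element with a cross generator and this internal structure again forces one of the nine graphs unless $M_s\cap M_t$ is trivial.

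Finally, for sufficiency I would assume (i) and (ii) and suppose some $\Gamma_i$ were induced on a set $S$. By (i) the set $S$ cannot lie inside a single $M$ (there the induced graph is complete or the line graph obtained for $|M|=6$), so $S$ meets at least two maximal cyclic subgroups; the intersection bounds in (ii) then make the adjacencies between different subgroups too sparse to realise any of the nine graphs, a contradiction. I expect the main obstacle to be twofold: pinning down the \emph{exact} intersection thresholds in (ii), where the $2$-group case (overlaps of size up to $2$ allowed) and the non-$2$-group case (only trivial overlaps allowed) must be separated using several of the nine graphs simultaneously together with both the internal order structure of each $M$ and the cross-subgroup generator non-adjacencies; and the exhaustive check in the sufficiency direction that, under (i) and (ii), none of $\Gamma_1,\dots,\Gamma_9$ can be assembled across two or three maximal cyclic subgroups.
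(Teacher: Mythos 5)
Your overall framework is the same as the paper's (Beineke's criterion, $V(\ppo)=G\setminus\{e\}$ via Theorem \ref{dominating power graph}, and the generator facts of Remark \ref{remark}), and your necessity arguments for (i) and (ii)(a) are correct; your uniform use of $K_5-e$ (three generators of $M$ plus two elements of incomparable prime orders, respectively three shared elements of a cyclic $2$-group intersection plus the two outside generators) is in fact a mild streamlining of the paper, which instead uses $\Gamma_3$ and $\Gamma_6$ in separate cases. But two genuine gaps remain. For the necessity of (ii)(b), the two mechanisms you name do not suffice: if, say, $|M_s|=|M_t|=9$ and $|M_s\cap M_t|=3$, there are only two subgroups involved (so no claw) and only two shared non-identity elements (so no $K_5-e$), yet the theorem asserts this configuration is forbidden. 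The paper handles it with the six vertices $m,m^{-1},g_s,g_s^{-1},g_t,g_t^{-1}$, which induce $\Gamma_6=K_2\vee(K_2\cup K_2)$; the case $|M_s|=6$ needs still other configurations ($\Gamma_5$ when $|M_s\cap M_t|=2$, and $\Gamma_6$ when it equals $3$). Your sentence that the richer internal structure ``again forces one of the nine graphs'' is precisely the step that has to be exhibited, and it is not automatic.

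The second gap is the converse, which in your proposal is only an assertion (``too sparse to realise any of the nine graphs''). The paper's sufficiency proof rests on three concrete ingredients absent from your plan: (a) by (ii)(b) and the connectedness of each $\Gamma_i$, every vertex of a putative forbidden induced subgraph must lie only in maximal cyclic $2$-groups, because a vertex of $M\in\m\setminus\mathcal{M}^{(2)}(G)$ has all its neighbours inside $M$, and the induced subgraph on $M\setminus\{e\}$ is complete (odd prime power) or the small graph of Figure \ref{fig Z6 power} ($|M|=6$), neither of which contains any $\Gamma_i$; (b) a triangle $x\sim y\sim z\sim x$ of $\po$ lies inside a single cyclic subgroup, since adjacency makes $\langle x\rangle,\langle y\rangle,\langle z\rangle$ pairwise comparable, hence a chain; (c) each of $\Gamma_2,\dots,\Gamma_9$ contains an induced $K_4-e$, whose two triangles then lie in two distinct members of $\mathcal{M}^{(2)}(G)$ sharing two non-identity elements, contradicting $|M_i\cap M_j|\leq 2$, while $K_{1,3}$ itself is excluded directly by (ii)(a). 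Without (a)--(c), or a substitute for them, the ``exhaustive check'' you defer is the bulk of the theorem.
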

\begin{corollary}
    Let $G$ be a finite non-cyclic group of odd order and let $M_i \in \m$. Then $\ppo$ is a line graph if and only if $G$ satisfies the following conditions: 
    \begin{itemize}
        \item[(i)] For each $i$, we have $\left|M_i\right|= p^\alpha$ for some prime $p$.
        \item[(ii)] The intersection of any two maximal cyclic subgroups is trivial.
    \end{itemize}

\end{corollary}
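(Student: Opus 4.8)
The plan is to deduce this corollary directly from Theorem \ref{arbitrary power graph} by specializing to the odd-order setting. First I would observe that a finite group of odd order is never a generalized quaternion group, since $|Q_{4n}| = 4n$ is even for every $n \geq 2$. Thus $G$ is a finite non-cyclic group that is not a generalized quaternion group, so Theorem \ref{arbitrary power graph} applies, and it suffices to show that its two conditions are equivalent to the two conditions stated in the corollary whenever $|G|$ is odd.

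Next I would simplify condition (i) of Theorem \ref{arbitrary power graph}. By Lagrange's theorem no element of $G$ has even order when $|G|$ is odd; in particular $G$ contains no cyclic subgroup of order $6$, since such a subgroup would contain an element of order $2$. Hence no $M \in \m$ can have $|M| = 6$, and the requirement $|M| \in \{6, p^\alpha\}$ collapses to $|M| = p^\alpha$ for some prime $p$. This is precisely condition (i) of the corollary.

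For the intersection conditions, the key observation is that $\mathcal{M}^{(2)}(G) = \emptyset$: any maximal cyclic subgroup that were a $2$-group would contain an element of order $2$, contradicting the odd order of $G$. Consequently part (a) of condition (ii) of Theorem \ref{arbitrary power graph} is vacuously satisfied, and in part (b) the hypothesis $M_s \in \m \setminus \mathcal{M}^{(2)}(G)$ holds for \emph{every} $M_s \in \m$. Therefore condition (ii) reduces to the single requirement that $|M_s \cap M_t| = 1$ for all $M_s, M_t \in \m$, which is exactly the statement that any two maximal cyclic subgroups intersect trivially, i.e. condition (ii) of the corollary.

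Since both conditions of Theorem \ref{arbitrary power graph} are thus equivalent (in the odd-order case) to the two conditions of the corollary, the biconditional follows at once. I do not expect any genuine obstacle here: the entire argument is a routine specialization of Theorem \ref{arbitrary power graph}, the only substantive bookkeeping being the verification that the two even-order phenomena appearing in that theorem, namely maximal cyclic subgroups of order $6$ and the family $\mathcal{M}^{(2)}(G)$, are automatically excluded once $|G|$ is assumed odd.
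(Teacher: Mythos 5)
Your proposal is correct and is exactly the intended derivation: the paper states this corollary without proof as an immediate specialization of Theorem \ref{arbitrary power graph}, relying on precisely the observations you make (odd order excludes generalized quaternion groups, maximal cyclic subgroups of order $6$, and the family $\mathcal{M}^{(2)}(G)$, so condition (ii)(a) is vacuous and (ii)(b) becomes trivial pairwise intersection). No gaps; your write-up simply makes explicit what the paper leaves implicit.
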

\begin{theorem}{\label{proper power q4n}}
    Let $G$ be a generalized quaternion group $Q_{4n}$. Then $\ppo$ is a line graph if and only if $n\in \{p, 2^k\}$  for some odd prime $p$ and $k\geq 1$.
\end{theorem}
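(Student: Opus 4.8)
The plan is to reduce everything to a single ``cyclic part'' of $\ppo$. First I would determine $\mathrm{Dom}(\po)$. Every non-identity element of $Q_{4n}$ has even order and $a^n=b^2$ is the unique involution, so $a^n$ is a power of each element; hence by Theorem \ref{dominating power graph} (cases (i) and (iv)) we get $\mathrm{Dom}(\po)=\{e,a^n\}$. Deleting these two vertices, I would read off the structure. The maximal cyclic subgroups of $Q_{4n}$ are $\langle a\rangle$ of order $2n$ together with the $n$ subgroups $\langle a^i b\rangle$ of order $4$, each meeting $\langle a\rangle$ only in $\{e,a^n\}$. Since $\langle a^i b\rangle$ is maximal cyclic, $a^i b$ is comparable only to the elements of $\langle a^i b\rangle=\{e,a^i b,a^n,a^{n+i}b\}$, so after deleting $e,a^n$ its only remaining neighbour is $a^{n+i}b$. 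Thus
\[
\ppo \;=\; H \;\sqcup\; nK_2,
\]
where $H$ is the subgraph of $\po$ induced on $\langle a\rangle\setminus\{e,a^n\}$ (the elements of $\langle a\rangle$ of order $\ge 3$) and the $n$ copies of $K_2$ are the pairs $\{a^i b,a^{n+i}b\}$.

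Because each of the nine forbidden graphs of Figure \ref{fig line graph} is connected, a graph is a line graph iff each of its components is; as $K_2$ is trivially a line graph, $\ppo$ is a line graph iff $H$ is, and the theorem reduces to analyzing $H$. Here I would use that $\langle a\rangle$ is cyclic, so two of its elements are adjacent in $\po$ iff one of their orders divides the other. Hence $H$ is obtained from the divisibility poset of the divisors $d\ge 3$ of $2n$ by replacing each $d$ with a clique of size $\phi(d)$ and joining two cliques completely exactly when the two divisors are comparable; in particular $H$ is complete iff the divisors $\ge 3$ of $2n$ form a chain under divisibility.

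For the forward direction I would prove the elementary lemma: for $n\ge 2$, the divisors $\ge 3$ of $2n$ form a chain iff $n\in\{p,2^k\}$. If $n=2^k$ the divisors $\ge 3$ are $4\mid 8\mid\cdots\mid 2^{k+1}$, and if $n=p$ they are $p\mid 2p$. Conversely, a chain cannot tolerate two distinct odd prime divisors (these are incomparable), nor an odd prime divisor together with $4\mid 2n$ (then $4$ and $p$ are incomparable), nor $p^2\mid 2n$ (then $p^2$ and $2p$ are incomparable); ruling these out forces $2n=2^{k+1}$ or $2n=2p$, i.e.\ $n\in\{2^k,p\}$. In these cases $H=K_{2n-2}=L(K_{1,2n-2})$ is a line graph, giving one implication.

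For the converse I would argue contrapositively: if $n\notin\{p,2^k\}$ then by the lemma there are incomparable divisors $d_1,d_2\ge 3$ of $2n$, both necessarily distinct and strictly less than $2n$. Picking two generators of $\langle a\rangle$ (possible since $\phi(2n)\ge 2$) and two elements each of order $d_1$ and of order $d_2$ (possible since $\phi(d_i)\ge 2$ for $d_i\ge 3$) gives six vertices inducing $K_2\vee 2K_2$: the two generators are adjacent to all four others, the order-$d_1$ pair and the order-$d_2$ pair each form a $K_2$, and there is no edge between those two pairs. The main obstacle, which I would emphasize, is that $\ppo$ here is claw-free, so $K_{1,3}$ is unavailable and one genuinely needs a larger obstruction; a short Krausz-partition argument (checking that every $5$-vertex induced subgraph of $K_2\vee 2K_2$ is a line graph while the whole graph is not) shows $K_2\vee 2K_2$ is itself one of the nine minimal forbidden subgraphs $\Gamma_i$ of Figure \ref{fig line graph}. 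By Lemma \ref{induced lemma}, $H$, and hence $\ppo$, is then not a line graph, completing the proof.
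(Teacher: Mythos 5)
Your proposal follows the same skeleton as the paper's proof, just organized more systematically: your obstruction $K_2\vee 2K_2$ is precisely the paper's $\Gamma_6$ (the paper exhibits it via elements of orders $2p,2pq,q$ when $n$ has two prime divisors, and of orders $2p,p^2,2p^2$ when $n=p^\alpha$, $p$ odd, $\alpha\ge 2$ --- exactly two incomparable divisors joined by a common multiple, as in your chain lemma), and your unified computation $\mathcal{P}^{**}(Q_{4n})=K_{2n-2}\cup nK_2=L(K_{1,2n-2}\cup nK_{1,2})$ covers both good cases at once, where the paper instead cites Bera's theorem for $n=2^k$. That reorganization is a genuine, if modest, improvement.

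However, your opening step contains a false statement, and it is load-bearing. ``Every non-identity element of $Q_{4n}$ has even order'' holds only when $n$ is a power of $2$: in $Q_{12}$ the element $a^2$ has order $3$. Whenever $n$ has an odd prime factor, the central involution $a^n$ is \emph{not} adjacent in $\po$ to the odd-order elements of $\langle a\rangle$ (by Lagrange $a^n\notin\langle a^2\rangle$, and $a^2\notin\langle a^n\rangle=\{e,a^n\}$), so $a^n$ is not a dominating vertex and $\mathrm{Dom}(\po)=\{e\}$; Theorem \ref{dominating power graph} cannot repair this, since it gives only \emph{necessary} conditions for a vertex to dominate. This failure hits exactly the case you need: for $n=p$ an odd prime, if one deletes only the true dominating set $\{e\}$, then $a^p$ survives, it is adjacent to all $2p$ elements $a^ib$ (because $(a^ib)^2=a^p$), your decomposition $H\sqcup nK_2$ is destroyed, and $\{a^p;\,b,ab,a^2b\}$ induces $K_{1,3}$ --- so the sufficiency direction collapses. (Your necessity direction is unaffected, since the six vertices of your $\Gamma_6$ avoid $e$ and $a^n$.) To be fair, the paper has the same soft spot: it asserts $V(\ppo)=G\setminus Z(G)$ with essentially the same miscitation. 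But where the paper's gap is a loose citation, yours is an explicitly false claim; the fix is to state the vertex-set convention $V(\ppo)=G\setminus\{e,a^n\}$ openly (or restrict the parity argument to $n=2^k$) rather than derive it from the parity of element orders. A separate minor slip: $\ppo$ is not claw-free for all $n$ --- if $n$ has three distinct odd prime factors $p,q,r$, a generator of $\langle a\rangle$ together with elements of orders $2p$, $2q$, $2r$ induces $K_{1,3}$; claws merely fail to appear in the minimal bad cases, which is why a six-vertex obstruction is needed there.
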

Further, we consider the (proper) enhanced power graph of a finite group and classify the finite groups $G$ such that the graphs $\c$ and $\dd$ are line graphs.

\begin{theorem}{\label{line EPG}}
     Let $G$ be a finite group. Then $\c$ is a line graph of some graph $\Gamma$ if and only if $G$ is a cyclic group.
\end{theorem}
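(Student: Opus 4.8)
The plan is to establish the two directions separately, the forward implication being immediate and the converse reducing to exhibiting a single forbidden induced subgraph from the list of Lemma \ref{induced lemma}.

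First I would dispose of sufficiency. If $G$ is cyclic, then by Theorem \ref{complete} the enhanced power graph $\c$ is the complete graph $K_{|G|}$. Since $K_n$ is the line graph of the star $K_{1,n}$ for every positive integer $n$, it follows at once that $\c$ is a line graph.

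For necessity I would argue contrapositively: assuming $G$ is non-cyclic, I would produce a forbidden induced subgraph in $\c$. The crucial counting step is that a non-cyclic finite group has at least three maximal cyclic subgroups. Indeed, $|\m| = 1$ holds exactly when $G$ is cyclic, while $|\m| \neq 2$ by Lemma \ref{M(G) >2}; hence $|\m| \geq 3$. I would then fix three distinct maximal cyclic subgroups $M_1 = \langle m_1 \rangle$, $M_2 = \langle m_2 \rangle$, and $M_3 = \langle m_3 \rangle$ and consider the four-element set $\{e, m_1, m_2, m_3\}$.

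The claim to verify is that this set induces the claw $K_{1,3}$, which appears among the nine forbidden graphs of Lemma \ref{induced lemma}. The identity $e$ belongs to $\t = \mathrm{Dom}(\c)$ by Remark \ref{remark}(i), so $e$ is adjacent to each $m_i$; and by Remark \ref{remark}(iv) the generators of distinct maximal cyclic subgroups are pairwise nonadjacent in $\c$, so $m_i \nsim m_j$ whenever $i \neq j$. Consequently the induced subgraph on $\{e, m_1, m_2, m_3\}$ is precisely $K_{1,3}$, and Lemma \ref{induced lemma} then forces $\c$ to fail to be a line graph. I do not expect any genuine obstacle here; the only point needing a line of care is that $m_1, m_2, m_3$ are genuinely distinct nonidentity vertices, which follows from Remark \ref{remark maximal} (a generator of one maximal cyclic subgroup lies in no other) together with the nontriviality of $G$.
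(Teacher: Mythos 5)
Your proof is correct and follows essentially the same route as the paper: the sufficiency direction via Theorem \ref{complete} together with $K_n = L(K_{1,n})$ is exactly the paper's Lemma \ref{epg complete lemma}, and your necessity argument---producing an induced $K_{1,3}$ on $\{e, m_1, m_2, m_3\}$ from three maximal cyclic subgroups guaranteed by Lemma \ref{M(G) >2}, then invoking Lemma \ref{induced lemma}---is precisely the paper's Proposition \ref{pg and epg proposition}.
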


\begin{theorem}{\label{arbitrary M EPG}}
    Let $G$ be a finite non-cyclic group and $\t= \displaystyle{\bigcap_{M_{i}\in \m} M_{i}}$. Then $\dd$ is a line graph of some graph $\Gamma$ if and only if $G$ satisfies the following conditions:
    \begin{itemize}
        \item[(i)] $|(M_i\cap M_j) \setminus \t|\leq 1$
        \item[(ii)] $|(M_i\cap M_j\cap M_k) \setminus \t|=0$ 
    \end{itemize}
\end{theorem}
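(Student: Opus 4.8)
The plan is to read off the structure of $\dd$ from Remark \ref{remark} and then apply Beineke's characterization (Lemma \ref{induced lemma}) in both directions. By Remark \ref{remark}(i) two vertices of $\c$ are adjacent precisely when they lie in a common maximal cyclic subgroup, and $\mathrm{Dom}(\c)=\t$; hence $\dd$ is the induced subgraph of $\c$ on $G\setminus\t$, and the family $\{M_i\setminus\t : M_i\in\m\}$ is an edge clique cover of $\dd$ (each $M_i\setminus\t$ is a complete subgraph since $M_i$ is cyclic, and every edge of $\dd$ lies in one of them). Conditions (i) and (ii) are exactly the assertions that any two of these cliques meet in at most one vertex and that no vertex lies in three of them; thus together they say that $\{M_i\setminus\t\}$ is a \emph{Krausz partition} of $E(\dd)$, i.e.\ a partition of the edge set into cliques in which every vertex lies in at most two parts.

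For the sufficiency I would argue as follows. Assuming (i), any two cliques $M_i\setminus\t$ and $M_j\setminus\t$ share at most one vertex, so no edge of $\dd$ lies in two of them and the cover is an edge partition; assuming (ii), no vertex of $\dd$ belongs to three of the cliques, so every vertex lies in at most two parts. Hence $\{M_i\setminus\t\}$ is a Krausz partition, and $\dd$ is a line graph. Equivalently, within the framework of Lemma \ref{induced lemma} one checks directly that none of the nine graphs can be induced: an induced copy of any of them would force either two covering cliques to share an edge (contradicting (i)) or some vertex to lie in three covering cliques (contradicting (ii)). In particular $N(v)$ is a union of at most two cliques for every $v$, which already rules out the claw $\Gamma_1=K_{1,3}$.

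For the necessity I would prove the contrapositive, producing a forbidden induced subgraph whenever (i) or (ii) fails. If (ii) fails, pick $x\in (M_i\cap M_j\cap M_k)\setminus\t$ with $M_i,M_j,M_k$ distinct and let $m_i,m_j,m_k$ be generators of $M_i,M_j,M_k$. Since a generator of a maximal cyclic subgroup lies in no other maximal cyclic subgroup, the elements $m_i,m_j,m_k$ are distinct from $x$ and from one another, lie in $V(\dd)$ (Remark \ref{remark}(iii)), are pairwise non-adjacent (Remark \ref{remark}(iv)), and each is adjacent to $x$; thus $\{x,m_i,m_j,m_k\}$ induces $K_{1,3}=\Gamma_1$. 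If (i) fails, choose distinct $x,y\in(M_i\cap M_j)\setminus\t$. Since $e\in\t$ while $x,y\notin\t$, the set $\{e,x,y\}$ gives $|M_i\cap M_j|\ge 3$; as $M_i\cap M_j$ is a proper subgroup of each (otherwise $M_i\subseteq M_j$ or $M_j\subseteq M_i$, contradicting maximality), we get $|M_i|,|M_j|\ge 6$, so each of $M_i,M_j$ has at least two generators $m_i,m_i'$ and $m_j,m_j'$. Moreover $x,y$ cannot generate $M_i$ or $M_j$, so these four generators are distinct from $x,y$ and from one another, and all six elements lie in $V(\dd)$. Here $m_i\sim m_i'$, $m_j\sim m_j'$, every generator of $M_i$ is non-adjacent to every generator of $M_j$ (Remark \ref{remark}(iv)), while $x,y$ are adjacent to each other and to all four generators. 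Hence $\{x,y,m_i,m_i',m_j,m_j'\}$ induces the graph $K_2\vee(K_2\cup K_2)$ obtained by joining the adjacent pair $x,y$ to two disjoint edges.

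The main obstacle is exactly this last configuration. Unlike the failure of (ii), the failure of (i) does \emph{not} produce a claw: the smaller natural configurations (the diamond on $\{m_i,x,y,m_j\}$, or its extension by a single further generator) are claw-free line graphs, so one is forced to exhibit the six-vertex graph $K_2\vee(K_2\cup K_2)$ and to verify that it is not a line graph. I would do this by checking that it admits no Krausz partition: its two vertices of degree $5$ (the images of $x,y$) force the edge $xy$ into a clique contained in one of the two $K_4$'s, after which one of $x,y$ is driven into a third clique. Since it is not a line graph, Lemma \ref{induced lemma} shows it contains some $\Gamma_\ell$ as an induced subgraph, which is then induced in $\dd$; this contradicts $\dd$ being a line graph. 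The remaining work is pure bookkeeping: confirming the order bounds guaranteeing two generators in each of $M_i,M_j$ and that $x,y$ are non-generators, hence genuinely distinct from the chosen generators.
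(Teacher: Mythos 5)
Your proof is correct, and its necessity half is essentially the paper's own argument: the failure of (ii) produces the same induced $K_{1,3}$ on a common element and three generators, and the failure of (i) produces the same six-vertex graph $K_2\vee(K_2\cup K_2)$ on two common elements together with two generators of each of $M_i$, $M_j$; that graph is exactly the paper's $\Gamma_6$ from Beineke's list, so your hand verification that it admits no Krausz partition (which is correct) could be replaced by a direct appeal to Lemma \ref{induced lemma}. Where you genuinely diverge is the sufficiency direction. The paper works entirely inside Beineke's framework: condition (ii) rules out an induced claw, and for the remaining eight forbidden graphs it invokes the figure-dependent fact that each of them contains the diamond $\Gamma'$ of Figure \ref{gamma} as an induced subgraph, then shows an induced diamond contradicts (i) via a small lemma that every triangle of $\dd$ lies inside a single maximal cyclic subgroup (otherwise condition (i), together with the observation $N(x)=N(x^{-1})$, forces all three triangle vertices to have order $2$, which a cyclic subgroup cannot accommodate). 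You instead verify Krausz's clique-partition criterion directly: by Remark \ref{remark}(i) the sets $M_i\setminus\t$ are cliques covering every edge of $\dd$, condition (i) makes this cover a partition of $E(\dd)$, and condition (ii) puts every vertex of $\dd$ in at most two parts. Your route is shorter, reveals the structural reason the theorem is true (the maximal cyclic subgroups minus $\t$ \emph{are} the Krausz cliques), and avoids both the diamond reduction and the order-two trick; its only cost is reliance on Krausz's theorem, which the paper never cites (Lemma \ref{induced lemma} is its only line-graph tool), though the direction you actually need --- a Krausz partition implies the graph is a line graph --- is the elementary direction and could be made self-contained in a couple of lines.
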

\begin{theorem}{\label{nilpotent proper EPG}}
    Let $G$ be a finite non-cyclic nilpotent group (except  non-abelian $2$-groups). Then $\dd$ is a line graph of some graph $\Gamma$ if and only if $G$ is isomorphic to one of the following groups.
    \begin{itemize}
        \item[(i)] $\mathbb{Z}_2\times \mathbb{Z}_{2^2}$ 
        \item[(ii)]  $\mathbb{Z} _{2^2}\times \mathbb{Z}_{2^2}$ 
        \item[(iii)] $\mathbb{Z}_n\times \mathbb{Z}_p\times \mathbb{Z}_p \times \cdots \times\mathbb{Z}_p$, where $p$ is a prime and $\mathrm{gcd}(n,p)=1$. 
        \item[(iv)] $\mathbb{Z}_n\times Q_{2^k}$ such that $\mathrm{gcd}(2,n)=1$.
         \item[(v)] $\mathbb{Z}_n\times P$ such that $P$ is a non-abelian $p$-group with $\mathrm{gcd}(n,p)=1$ and the intersection of any two maximal cyclic subgroups of $P$ is trivial.
          
    \end{itemize}
\end{theorem}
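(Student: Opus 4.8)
The plan is to derive the classification from the general criterion of Theorem \ref{arbitrary M EPG}, which reduces the question to controlling the sets $(M_i\cap M_j)\setminus\t$ and $(M_i\cap M_j\cap M_k)\setminus\t$ for maximal cyclic subgroups. Since $G$ is nilpotent, Theorem \ref{nilpotent} lets me write $G=P_1\times\cdots\times P_r$ as the product of its Sylow subgroups, and Lemma \ref{on maximal connectivity} presents each member of $\m$ as a product $M_1\times\cdots\times M_r$ with $M_i$ maximal cyclic in $P_i$; in particular $\t=\mathcal{T}(P_1)\times\cdots\times\mathcal{T}(P_r)$, where $\mathcal{T}(P_i)$ denotes the intersection of all maximal cyclic subgroups of $P_i$. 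The first step is to bound the number of non-cyclic Sylow factors: if two distinct Sylow subgroups $P_a,P_b$ were non-cyclic, I would take two maximal cyclic subgroups of $G$ differing only in the $P_a$-coordinate and compute $|(M_i\cap M_j)\setminus\t|$ factorwise; a lower-bound estimate isolates a factor $(p_b-1)|\mathcal{T}(P_b)|$ that condition (i) forces to be $\le 1$, giving $p_b=2$, and by symmetry $p_a=2$, contradicting $p_a\neq p_b$. Hence at most one Sylow subgroup is non-cyclic, and as $G$ is non-cyclic exactly one factor, say $P$, is non-cyclic while the rest multiply to a cyclic group $\mathbb{Z}_n$ with $\gcd(n,p)=1$; thus $G\cong P\times\mathbb{Z}_n$.

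Next I would rewrite the two conditions of Theorem \ref{arbitrary M EPG} in this product. Because $\mathbb{Z}_n$ is cyclic, every maximal cyclic subgroup of $G$ equals $A\times\mathbb{Z}_n$ with $A\in\mathcal{M}(P)$, and $\t=\mathcal{T}(P)\times\mathbb{Z}_n$, so $|(M_i\cap M_j)\setminus\t|=\big(|A_i\cap A_j|-|\mathcal{T}(P)|\big)n$ and $|(M_i\cap M_j\cap M_k)\setminus\t|=\big(|A_i\cap A_j\cap A_k|-|\mathcal{T}(P)|\big)n$. Thus condition (ii) becomes $A_i\cap A_j\cap A_k=\mathcal{T}(P)$ for all triples, and condition (i) becomes $\big(|A_i\cap A_j|-|\mathcal{T}(P)|\big)n\le 1$: if $n\ge2$ this forces $A_i\cap A_j=\mathcal{T}(P)$ for all pairs, while if $n=1$ it permits $A_i\cap A_j$ to exceed $\mathcal{T}(P)$ only when $p=2$, $\mathcal{T}(P)=\{e\}$, and $|A_i\cap A_j|=2$. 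To make these concrete I would read off $\mathcal{T}(P)=\mathrm{Dom}(\mathcal{P}_E(P))$ from Theorem \ref{Dominating enhanced power graph} together with Remark \ref{remark}(i): $\mathcal{T}(P)=\{e\}$ when $P$ is abelian non-cyclic or non-abelian non-quaternion, and $\mathcal{T}(P)$ is the order-$2$ center when $P=Q_{2^k}$.

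The classification then splits along the isomorphism type of $P$. For $P=Q_{2^k}$ every maximal cyclic subgroup contains the unique involution, so all pairwise and triple intersections equal $\mathcal{T}(P)$ and both conditions hold for every admissible $n$, yielding (iv). For $P$ non-abelian non-quaternion, $\mathcal{T}(P)=\{e\}$ forces all pairwise intersections to be trivial (the order-$2$ slack is unavailable except when $p=2$ and $n=1$, which is precisely the excluded non-abelian $2$-group case), yielding (v). For $P$ abelian I would show that, when $n\ge2$ or $p$ is odd, the requirement that maximal cyclic subgroups meet pairwise in $\{e\}$ holds exactly for elementary abelian $P$, giving (iii); and when $p=2$ and $n=1$ the relaxed condition admits exactly the two further groups $\mathbb{Z}_2\times\mathbb{Z}_{2^2}$ and $\mathbb{Z}_{2^2}\times\mathbb{Z}_{2^2}$, giving (i) and (ii). I would then run the same computations in reverse to confirm that each listed group satisfies conditions (i) and (ii) of Theorem \ref{arbitrary M EPG}.

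I expect the main obstacle to be the abelian $2$-group analysis with $n=1$: I must prove that among all non-cyclic abelian $2$-groups of exponent at least $4$, only $\mathbb{Z}_2\times\mathbb{Z}_{2^2}$ and $\mathbb{Z}_{2^2}\times\mathbb{Z}_{2^2}$ satisfy both that no two maximal cyclic subgroups share a subgroup of order exceeding $2$ and that no involution lies in three maximal cyclic subgroups. Ruling out exponent $\ge 8$, where two maximal cyclic subgroups can overlap in order $\ge 4$, and ruling out rank $\ge 3$ or too many order-$4$ summands, where a common involution sits in three or more maximal cyclic subgroups and violates the triple-intersection condition, is the delicate computational heart of the argument; the companion claim that for $n\ge2$ only elementary abelian $P$ survives is easier but must still be established carefully from the structure of maximal cyclic subgroups of abelian $p$-groups.
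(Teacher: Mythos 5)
Your proposal is correct and follows a genuinely different route from the paper's. The paper splits the proof into Lemma \ref{one non cyclic}, Proposition \ref{proper EPG abelian} and Proposition \ref{Proper EPG non-abelian nilpotent}. Its Lemma \ref{one non cyclic} (unique non-cyclic Sylow factor) is proved by explicitly constructing an induced $K_{1,3}$ in $\dd$ from three maximal cyclic subgroups differing in one Sylow coordinate, whereas you deduce the same fact numerically from condition (i) of Theorem \ref{arbitrary M EPG}; your bound is valid, since with $\t=\mathcal{T}(P_1)\times\cdots\times\mathcal{T}(P_r)$ and $M_i\cap M_j=(A\cap A')\times B\times\prod_c C_c$ one gets $|(M_i\cap M_j)\setminus \t|\geq |B|-|\mathcal{T}(P_b)|\geq (p_b-1)|\mathcal{T}(P_b)|\geq p_b-1$, using that a maximal cyclic subgroup $B$ of the non-cyclic $p_b$-group $P_b$ properly contains $\mathcal{T}(P_b)$. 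The larger divergence is in the abelian case: the paper does not classify abelian $2$-groups from scratch, but instead uses that $\mathcal{P}_E(P)=\mathcal{P}(P)$ for abelian $P$ {\rm \cite[Theorem 31]{a.Cameron2016}} and imports {\rm \cite[Proposition 3.5]{a.bera2022}} to reduce $P$ to the three candidates $\mathbb{Z}_2\times\mathbb{Z}_{2^2}$, $\mathbb{Z}_{2^2}\times\mathbb{Z}_{2^2}$, $\mathbb{Z}_p\times\cdots\times\mathbb{Z}_p$, applying Theorem \ref{arbitrary M EPG} only to exclude $n>1$ in the two exceptional cases; the converse for (i)--(ii) is likewise quoted from {\rm \cite[Theorem 3.4]{a.bera2022}}. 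Your plan instead runs the whole classification uniformly through the reformulated conditions $(|A_i\cap A_j|-|\mathcal{T}(P)|)\,n\leq 1$ and $A_i\cap A_j\cap A_k=\mathcal{T}(P)$, with the $p=2$, $n=1$ slack isolated correctly; this buys a self-contained argument independent of the power-graph results of \cite{a.bera2022}, at the cost of having to do the abelian $2$-group analysis by hand.

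That analysis is the one step you state as an obligation rather than prove, and for completeness it must be carried out (or cited, as the paper does). It is short along exactly the lines you indicate. If $P$ has exponent $2^a$ with $a\geq 3$, write $P=\langle x\rangle\times Q$ with $o(x)=2^a$ and $Q\neq 1$, pick an involution $u\in Q$; then $\langle x\rangle$ and $\langle xu\rangle$ are maximal cyclic (both of exponent order) and both contain $x^2$, so their intersection has order at least $2^{a-1}\geq 4$, violating condition (i) since $\mathcal{T}(P)=\{e\}$. If $P$ has exponent $4$ and at least three direct summands, pick involutions $u,v$ in two distinct summands complementary to a $\mathbb{Z}_4$-summand $\langle x\rangle$; then $\langle x\rangle$, $\langle xu\rangle$, $\langle xv\rangle$ are three distinct maximal cyclic subgroups all containing $x^2$, violating condition (ii). This leaves only the rank-$2$ exponent-$4$ groups $\mathbb{Z}_2\times\mathbb{Z}_{2^2}$ and $\mathbb{Z}_{2^2}\times\mathbb{Z}_{2^2}$, which satisfy both conditions (pairwise intersections of order at most $2$, trivial triple intersections), matching the paper's list. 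With this paragraph added, your proof is complete and fully self-contained.
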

 Finally, we investigate the question: When  these above mentioned graphs are the complement of line graphs? Consequently, we obtain the following results.
\begin{theorem}{\label{line complement power graph}}
    The power graph $\po$ of a finite group  is the complement of a line graph of some graph $\Gamma$ if and only if $G$ is isomorphic to one of the groups:
    $\mathbb Z_6, \  \mathbb{Z}_2\times \cdots \times \mathbb{Z}_2, \; Q_8, \; \mathbb{Z}_{p^\alpha}, \text{ where $p$ is a prime.}$
\end{theorem}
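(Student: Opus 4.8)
The plan is to invoke Lemma \ref{complement induced lemma}: $\po$ is the complement of a line graph if and only if none of the nine graphs $\overline{\Gamma_i}$ occurs as an induced subgraph. Two of the nine will carry almost all of the argument, namely $\overline{K_{1,3}} = K_3 \cup K_1$ (a triangle together with an isolated vertex) and $\overline{K_5 - e} = K_2 \cup 3K_1$ (an edge together with three isolated vertices). For the latter I would first check that $K_5-e$ is a minimal non-line graph (it is not a line graph, by a short degree count, and each of its $4$-vertex induced subgraphs is $K_4$ or the diamond, both line graphs), so it is one of the nine Beineke graphs and $K_2\cup 3K_1$ is genuinely forbidden. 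Throughout I would use that the identity is a dominating vertex of $\po$, that two elements of coprime order are non-adjacent, and that inside a cyclic subgroup the induced power graph is complete.

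For the \emph{sufficiency} direction I would compute $\po$ explicitly for each listed group and display $\overline{\po}$ as an honest line graph. For $\mathbb{Z}_{p^\alpha}$, Theorem \ref{complete power graph} gives $\po = K_{p^\alpha}$, so $\overline{\po}$ is edgeless $=L(\text{a matching})$. For $\mathbb{Z}_2\times\cdots\times\mathbb{Z}_2$ every nonidentity element has order $2$, so $\po$ is a star and $\overline{\po}=K_{|G|-1}\cup K_1 = L(K_{1,|G|-1}\sqcup K_2)$. For $Q_8$ the involution is dominating and the six order-$4$ elements form three adjacent pairs, so $\po = K_2\vee 3K_2$ and $\overline{\po}=K_{2,2,2}\cup 2K_1 = L(K_4\sqcup 2K_2)$, the octahedron being $L(K_4)$. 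For $\mathbb{Z}_6$ the three dominating vertices become isolated in the complement while the order-$3$ pair and the involution yield a $P_3$, so $\overline{\po}=P_3\cup 3K_1 = L(P_4\sqcup 3K_2)$. In every case $\overline{\po}$ is a line graph.

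For the \emph{necessity} direction, assume $\overline{\po}$ is a line graph and split on the prime divisors of $|G|$. If $|G|$ has at least two prime divisors, the forbidden $K_3\cup K_1$ forces all element orders to be squarefree (an element of order $p^2$ supplies a triangle of nontrivial $p$-elements in its cyclic subgroup, and any element of coprime order is isolated from it) and forces the largest prime divisor to be at most $3$ (a prime $q\ge 5$ gives $q-1\ge 4$ order-$q$ elements inside one cyclic subgroup, a triangle isolated from any $p$-element). Hence $|G|=2^a3^b$ with squarefree exponents. Now the forbidden $K_2\cup 3K_1$, realized by an order-$3$ edge $\{x,x^2\}$ together with three mutually non-adjacent vertices away from it, rules out $b\ge 2$ (too many order-$3$ subgroups) and rules out three involutions; since no group has exactly two involutions, $G$ has a unique involution, its elementary abelian Sylow $2$-subgroup is $\mathbb{Z}_2$, and $a=1$. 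This leaves $|G|=6$ with a single involution, i.e. $G\cong\mathbb{Z}_6$.

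If $G$ is a $p$-group, then for odd $p$ a non-cyclic group has at least $p+1\ge 4$ subgroups of order $p$, and taking the edge $\{x,x^2\}$ from one of them together with one element from each of three others produces $K_2\cup 3K_1$; so $G$ is cyclic, $G\cong\mathbb{Z}_{p^\alpha}$. The case $p=2$ is the main obstacle, since order-$2$ subgroups carry no edge and the uniform "many subgroups'' argument fails. Here one argues that if $G$ is neither cyclic nor elementary abelian it has an element of order $4$, and a second involution $z\ne x^2$ yields the induced $K_3\cup K_1$ on $\{x,x^2,x^3,z\}$; thus $G$ has a unique involution and so is cyclic or generalized quaternion. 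Finally $Q_{2^k}$ with $k\ge 4$ fails because $\langle a\rangle$ has order $\ge 8$ and gives a triangle of elements of order $\ge 8$ isolated from the order-$4$ element $b$, while in $Q_8$ the unique involution is dominating and no forbidden subgraph appears. Assembling the four strands yields exactly the stated list.
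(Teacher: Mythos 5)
Your proposal is correct, and the sufficiency half coincides with the paper's: the same explicit decompositions ($\overline{\mathcal{P}(\mathbb{Z}_{p^\alpha})}=L(p^\alpha K_2)$, $\overline{\mathcal{P}(\mathbb{Z}_2\times\cdots\times\mathbb{Z}_2)}=L(K_2\cup K_{1,2^k-1})$, $\overline{\mathcal{P}(Q_8)}=L(K_4\cup 2K_2)$, $\overline{\mathcal{P}(\mathbb{Z}_6)}=L(P_4\cup 3K_2)$). You also use exactly the same two forbidden subgraphs the paper does: your $K_3\cup K_1$ and $K_2\cup 3K_1$ are the paper's $\overline{\Gamma_1}$ and $\overline{\Gamma_3}$ (so your side-verification that $K_5-e$ is a minimal non-line graph, while correct, is unnecessary --- it is literally $\Gamma_3$ in Beineke's list). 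Where you genuinely diverge is the necessity direction. The paper splits into cyclic versus non-cyclic: the cyclic case (Proposition \ref{cyclic power complement}) is close to your two-prime analysis, but the non-cyclic case (Proposition \ref{complement all non-cyclic}) runs entirely through maximal cyclic subgroups --- $|\m|\geq 3$ by Lemma \ref{M(G) >2}, a case split on $|\m|\geq 4$ versus $|\m|=3$, the bound $\phi(|M_i|)\leq 2$ forcing $|M_i|\leq 4$ and hence $|G|\leq 10$, and finally a citation of an external table classifying small groups with exactly three maximal cyclic subgroups. You instead split on the number of prime divisors and use classical group theory: Cauchy's theorem, squarefreeness of element orders, the fact that a group of even order has an odd number of involutions, and the theorem that a $2$-group with a unique involution is cyclic or generalized quaternion. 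Each route has a payoff: the paper's non-cyclic proposition is stated for all four graphs $\{\c,\po,\dd,\ppo\}$ simultaneously and is then reused verbatim for Theorems \ref{line complement proper power graph}--\ref{line complement proper EPG}, whereas your argument is self-contained (no reliance on the external classification table) but is power-graph-specific --- it exploits that elements of coprime order are non-adjacent in $\po$, which fails in $\c$, so it would not transfer to the enhanced power graph versions without modification.
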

\begin{theorem}{\label{line complement proper power graph}}
    Let $G$ be a finite group which is not a cyclic $p$-group. Then $\ppo$ is the complement of a line graph of some graph $\Gamma$ if and only if $G$ is isomorphic to one of the groups:
    $\mathbb Z_6, \mathbb{Z}_2\times \cdots \times \mathbb{Z}_2, Q_8.$
\end{theorem}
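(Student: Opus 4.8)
The plan is to apply Lemma~\ref{complement induced lemma}. Since an induced subgraph of a line graph is again a line graph, the class of complements of line graphs is closed under induced subgraphs; hence $\ppo$ is the complement of a line graph if and only if it contains no induced copy of any graph that is itself not a complement of a line graph. Two small obstructions will do almost all of the work. First, $\overline{K_{1,3}}=K_3\cup K_1$ (a triangle together with a vertex adjacent to none of it) is one of the $\overline{\Gamma_i}$, because $K_{1,3}$ is the first Beineke graph of Figure~\ref{fig line graph}; so $\ppo$ may contain no triangle avoided by an outside vertex. Second, $K_5-e$ is not a line graph (it contains one of the nine graphs of Figure~\ref{fig line graph}), so its complement $K_2\cup 3K_1$ (an edge together with three pairwise non-adjacent vertices) is also forbidden. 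On the positive side we record that $K_2\cup K_1$, $2K_2$, $3K_2$ and the edgeless graphs \emph{are} complements of line graphs, since their complements $P_3=L(P_4)$, $C_4=L(C_4)$, the octahedron $K_{2,2,2}=L(K_4)$ and the complete graphs $K_m=L(K_{1,m})$ are line graphs.

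For sufficiency we compute $\ppo$ in the three cases using Theorem~\ref{dominating power graph} and Theorem~\ref{complete power graph}. For $G=\mathbb Z_6$ the dominating vertices are the identity and the two generators, leaving the two elements of order $3$ (an edge) and the involution (isolated), so $\ppo=K_2\cup K_1$; for $G=\mathbb Z_2\times\cdots\times\mathbb Z_2$ only the identity dominates and no two involutions are comparable, so $\ppo$ is edgeless; for $G=Q_8$ the identity and the unique involution dominate, and the three cyclic subgroups of order $4$ each contribute an edge on their two generators, giving $\ppo=3K_2$. In each case the complement is a line graph by the first paragraph.

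For necessity assume $\ppo$ is the complement of a line graph and split according to $\mathrm{Dom}(\po)$. If $G$ is non-cyclic and not generalized quaternion, then by Theorem~\ref{dominating power graph} the only dominating vertex is $e$, so $\ppo=\po-e$; using Remark~\ref{remark}(iii), for a maximal cyclic subgroup $M=\langle m\rangle$ one has $N[m]=M$, so when $|M|$ is a prime power $\ge 5$ three generators of $M$ form a triangle avoided by any outside vertex, giving $K_3\cup K_1$. The cases $|M|\in\{4,6\}$ are ruled out by a triangle inside $M$ (through $m$ and, respectively, its involution or its order-$3$ part) together with an outside vertex $v$ not comparable to it; here the hypothesis that $G$ is not generalized quaternion is used through Theorem~\ref{dominating power graph} to guarantee that no involution is a power of every element, so such a $v$ exists. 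This forces every element to have order in $\{1,2,3\}$, and if an order-$3$ element is present then an edge $\{s,s^2\}$ with three pairwise non-comparable vertices (three involutions when the Sylow $2$-subgroup is non-cyclic, or elements from distinct order-$3$ subgroups) yields $K_2\cup 3K_1$, the remaining unique-involution situation producing an element of order $6$; hence $G\cong\mathbb Z_2\times\cdots\times\mathbb Z_2$. In the cyclic case $G=\mathbb Z_n$ with $n$ not a prime power, two vertices of $\ppo$ are adjacent iff one of their orders divides the other; a prime $q\ge 5$ dividing $n$ gives a clique of $\phi(q)\ge 3$ elements of order $q$ whose triangle is avoided by an element of order $p$ for another $p\mid n$, while if $4\mid n$ the two elements of order $4$ and the involution form a triangle avoided by an order-$3$ element and if $9\mid n$ the six elements of order $9$ form a triangle avoided by the involution; thus $n=6$. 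Finally, for $G=Q_{4n}$ Theorem~\ref{dominating power graph} gives $\mathrm{Dom}(\po)=\{e,z\}$ with $z$ the unique involution, so $\ppo$ is the disjoint union of $\po(\langle a\rangle)-\{e,z\}$ with the edges coming from the elements $a^ib$ of order $4$; for $n\ge 3$ the part $\langle a\rangle$ has order $2n\ge 6$ and contributes a triangle avoided by $b$, whereas $n=2$ gives exactly $3K_2=\ppo(Q_8)$.

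I expect the main difficulty to be the borderline configurations that contain no triangle yet still fail: the elementary case $\mathbb Z_3\times\mathbb Z_3$ (where $\ppo=4K_2$) and groups of exponent dividing $6$ such as $S_3$ (where $\ppo=K_2\cup 3K_1$). These cannot be excluded by the claw-complement $K_3\cup K_1$ alone and force us to isolate the second obstruction $K_2\cup 3K_1$ and to establish the component count showing that a disjoint union $aK_2\cup bK_1$ is a complement of a line graph precisely when $a=0$ or $(a-1)+b\le 2$. The technical crux is verifying that $K_5-e$ is genuinely a forbidden configuration, equivalently that $K_{2,2,2}=L(K_4)$ is a line graph but $K_{2,2,2,2}$ is not.
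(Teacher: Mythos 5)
Your strategy is sound, reaches the correct classification, and in fact rests on exactly the same two forbidden induced subgraphs as the paper's proof (which is obtained by combining Propositions \ref{complement all non-cyclic} and \ref{cyclic proper complement}): namely $K_3\cup K_1=\overline{K_{1,3}}=\overline{\Gamma_1}$ and $K_2\cup 3K_1=\overline{K_5-e}=\overline{\Gamma_3}$, together with the same explicit realizations $\overline{K_2\cup K_1}=P_3=L(P_4)$, $\overline{(2^k-1)K_1}=K_{2^k-1}=L(K_{1,2^k-1})$ and $\overline{3K_2}=K_{2,2,2}=L(K_4)$ for sufficiency; in particular your ``technical crux'' about $K_5-e$ is already dispatched by Lemma \ref{complement induced lemma}, since $K_5-e$ is the graph $\Gamma_3$ of Figure \ref{fig line graph}. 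What is genuinely different is the group-theoretic organization of the necessity argument. The paper splits on $|\m|$: for $|\m|\ge 4$ it plants $\overline{\Gamma_3}$ using two generators of a maximal cyclic subgroup of order at least $3$ together with generators of three other maximal cyclic subgroups; for $|\m|=3$ it bounds $|M_i|\le 4$, hence $|G|\le 10$, and quotes an external table of groups with exactly three maximal cyclic subgroups; and it does this once for all four graphs $\po,\ppo,\c,\dd$ simultaneously, with the cyclic case treated separately by element orders. You instead split according to Theorem \ref{dominating power graph} (non-cyclic non-quaternion / cyclic / generalized quaternion) and argue via element orders and elementary group theory (a unique involution is central, forcing an element of order $6$). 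Your route is self-contained where the paper leans on a citation; the paper's route avoids your delicate analysis of groups of exponent dividing $6$.

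There are, however, concrete justification gaps in your write-up. First, in the $|M|=6$ subcase, Theorem \ref{dominating power graph} only guarantees \emph{some} element incomparable to the involution $m^3$ (or to the order-$3$ element $m^2$), and that element may lie inside $M$ itself ($m^2$ and $m^4$ are incomparable to $m^3$, and $m^3$ to $m^2$), in which case your configuration yields no $K_3\cup K_1$; you need the extra observation that if every $v\notin M$ were comparable to both $m^2$ and $m^3$, then $m^2,m^3\in\langle v\rangle$, hence $M=\langle m^2m^3\rangle\subseteq\langle v\rangle$, contradicting maximality of $M$. (Your $|M|=4$ case is fine, since inside $\mathbb{Z}_4$ every nonidentity element is comparable to the involution, so the guaranteed vertex is automatically outside $M$.) Second, your case analysis covers $|M|$ a prime power $\ge 5$ and $|M|\in\{4,6\}$ but omits $|M|\in\{10,12,15,\dots\}$, so the assertion that every element has order in $\{1,2,3\}$ does not yet follow; the fix is immediate, since $\phi(|M|)\ge 3$ for every $|M|\ge 5$ with $|M|\ne 6$, and three generators of $M$ plus any outside vertex give $K_3\cup K_1$. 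Third, your claim $\mathrm{Dom}(\po)=\{e,z\}$ for $Q_{4n}$ misreads Theorem \ref{dominating power graph}, which is a necessary, not sufficient, condition: when $n$ has an odd prime factor the involution is not dominating (in $Q_{12}$ one has $a^3\nsim a^2$), so your disjoint-union description of $\ppo(Q_{4n})$ fails for such $n$ (the paper's own proof of Theorem \ref{proper power q4n} contains the same slip). This last point is harmless for your conclusion, since the triangle $\{a,a^{-1},a^2\}$ and the vertex $b$ are non-dominating in any case, but it should be stated correctly.
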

\begin{theorem}{\label{line complement EPG}}
    Let $G$ be a finite group of order $n$. Then the enhanced power graph $\c$ is the complement of a line graph of some graph $\Gamma$ if and only if $G$ is isomorphic to one of the groups:
    $\mathbb Z_n,   \mathbb{Z}_2\times \cdots \times \mathbb{Z}_2, \; Q_8$.
\end{theorem}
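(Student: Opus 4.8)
The plan is to combine the forbidden--subgraph criterion of Lemma~\ref{complement induced lemma} with the description of adjacency in $\c$ through maximal cyclic subgroups (Remark~\ref{remark}), using throughout that the property of being the complement of a line graph is hereditary: if $\c$ contains an induced subgraph $H$ whose complement $\overline H$ is not a line graph, then $\c$ cannot be the complement of a line graph. The two obstructions I would single out are $K_3\cup K_1=\overline{K_{1,3}}$ and $K_2\cup 3K_1=\overline{K_5-e}$; here $K_{1,3}$ is the claw and $K_5-e$ is not a line graph (a short clique--partition check confirms this), so neither $K_3\cup K_1$ nor $K_2\cup 3K_1$ may appear as an induced subgraph of $\c$.

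For the ``if'' direction I would exhibit $\overline{\c}$ as a line graph in each case. If $G=\mathbb{Z}_n$ then $\c=K_n$ by Theorem~\ref{complete}, so $\overline{\c}$ is edgeless and equals $L(nK_2)$. If $G=\mathbb{Z}_2\times\cdots\times\mathbb{Z}_2$ has order $2^k$ then every nonidentity element is an involution, so $\c$ is the star $K_{1,2^k-1}$ and $\overline{\c}=K_1\cup K_{2^k-1}=L(K_2\cup K_{1,2^k-1})$. For $G=Q_8$ one has $\t=\{1,-1\}$ and the noncentral elements split into the pairs $\{\pm i\},\{\pm j\},\{\pm k\}$, giving $\c=K_2\vee 3K_2$ and hence $\overline{\c}=2K_1\cup K_{2,2,2}$; since $K_{2,2,2}$ is the octahedron $L(K_4)$, $\overline{\c}$ is a line graph. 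Thus all three families yield complements of line graphs.

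For the ``only if'' direction, suppose $\c$ is the complement of a line graph; if $G$ is cyclic then $G=\mathbb{Z}_n$ lies in the list, so assume $G$ non-cyclic, whence $|\m|\neq 1,2$, i.e. $|\m|\ge 3$ by Lemma~\ref{M(G) >2}. Since a generator $m$ of $M\in\m$ satisfies $N[m]=M$ (Remark~\ref{remark}(iii)) and lies in no other maximal cyclic subgroup (Remark~\ref{remark maximal}), if some $M$ has $\phi(|M|)\ge 3$ then any three generators of $M$ together with any element of $G\setminus M$ induce $K_3\cup K_1$; hence $|M|\in\{2,3,4,6\}$ for all $M$. To exclude $|M|=6$ I would place the involution (or an order-$3$ element) of such an $M\cong\mathbb{Z}_6$ into a triangle with two generators of $M$, and take as the fourth vertex a generator of another maximal cyclic subgroup not containing that element; since $\t=\mathrm{Dom}(\c)$ is a subgroup that cannot contain both an involution and an order-$3$ element of $\mathbb{Z}_6$ without equalling $M$ (which would force $|\m|=1$), such a subgroup exists and we again obtain an induced $K_3\cup K_1$. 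Therefore every maximal cyclic subgroup has order in $\{2,3,4\}$.

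Finally I would split on $|\m|$. If $|\m|\ge 4$ and some $M$ has order $\ge 3$, then two generators of $M$ form an edge while generators of three other maximal cyclic subgroups form an independent triple non-adjacent to that edge, giving an induced $K_2\cup 3K_1$; this forces every $M$ to have order $2$, so $G$ is elementary abelian. If $|\m|=3$, then $G=M_1\cup M_2\cup M_3$ with all orders in $\{2,3,4\}$, and an inclusion--exclusion count together with elementary divisibility and order constraints (in particular that two distinct cyclic subgroups of order $4$ in a group of order $8$ must intersect in a subgroup of order $2$) eliminates every order pattern except $\{2,2,2\}$, giving the Klein four-group, and $\{4,4,4\}$, which forces $|G|=8$ with exactly six elements of order $4$ and hence $G\cong Q_8$. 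The main obstacle I expect is precisely this $|\m|=3$ bookkeeping --- ruling out the mixed patterns such as $\{4,4,2\}$ and those involving order $3$, and pinning down the unique group for each surviving pattern --- which is the technical heart of the argument; the remaining delicate point is the direct verification that $K_5-e$ is not a line graph, justifying the use of $K_2\cup 3K_1$ as an obstruction.
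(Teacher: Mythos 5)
Your proposal is correct, and its skeleton coincides with the paper's: the paper proves the non-cyclic case in Proposition \ref{complement all non-cyclic} using exactly your two obstructions (its $\overline{\Gamma_1}=K_3\cup K_1$ and $\overline{\Gamma_3}=K_2\cup 3K_1$; note $\Gamma_3=K_5-e$ is already one of Beineke's nine graphs, so your separate clique-partition check, though correct, is not needed), the same case split $|\m|\geq 4$ versus $|\m|=3$ with the same placement of generators of maximal cyclic subgroups via Remark \ref{remark}, and the same witnesses in the converse direction ($K_n=\overline{L(nK_2)}$, $\overline{\mathcal{P}_E(\mathbb{Z}_2^k)}=L(K_2\cup K_{1,2^k-1})$, $\overline{\mathcal{P}_E(Q_8)}=L(2K_2\cup K_4)$). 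The one genuine divergence is the endgame of the $|\m|=3$ case: the paper bounds $|M_i|\leq 4$, deduces $|G|\leq|M_1\cup M_2\cup M_3|\leq 10$, and then \emph{cites} Table 1 of \cite{a.kumar2022complement} to conclude that $Q_8$ and $\mathbb{Z}_2\times\mathbb{Z}_2$ are the only groups of order at most $10$ with exactly three maximal cyclic subgroups, whereas you classify the order patterns directly by inclusion--exclusion, Lagrange/Cauchy divisibility, and the fact that two distinct cyclic subgroups of order $4$ in a group of order $8$ meet in the involution. Your bookkeeping does go through (mixed patterns and those involving order $3$ die by union bounds against divisibility; $(4,4,4)$ forces $|G|=8$ with six elements of order $4$, hence $Q_8$), so your version buys self-containedness at the cost of a longer case analysis; conversely, the paper's citation shortens the proof but leans on an external classification. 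A further minor structural difference: you establish $|M|\in\{2,3,4\}$ for \emph{all} maximal cyclic subgroups before splitting on $|\m|$, while the paper needs the order restriction only in the $|\m|=3$ case, handling $|\m|\geq 4$ in one stroke with $K_2\cup 3K_1$; your order-$6$ exclusion via $\t$ is also slightly different in form from the paper's (which picks the second subgroup $M_2$ directly and argues it cannot contain both $x^2$ and $x^3$), but both rest on the same maximality observation.
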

\begin{theorem}{\label{line complement proper EPG}}
     Let $G$ be a finite non-cyclic group. Then $\dd$ is the complement of a line graph of some graph $\Gamma$ if and only if $G$ is isomorphic to either    $\mathbb{Z}_2\times \cdots \times \mathbb{Z}_2$ or $Q_8$.
\end{theorem}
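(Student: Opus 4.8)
The plan is to apply Lemma \ref{complement induced lemma}, so that $\dd$ is the complement of a line graph precisely when none of the nine graphs $\overline{\Gamma_i}$ is an induced subgraph; equivalently, $\overline{\dd}$ must be a line graph, and since line graphs are closed under induced subgraphs (Lemma \ref{induced lemma}), $\overline{\dd}$ may not contain any non-line graph as an induced subgraph. The decisive forbidden subgraph will be $\overline{\Gamma_1}=\overline{K_{1,3}}$, which is a triangle together with a vertex nonadjacent to all of it (that is, $K_3\cup K_1$). For sufficiency I would just compute $\dd$ in the two cases. If $G=\mathbb{Z}_2\times\cdots\times\mathbb{Z}_2$, then every $M\in\m$ has order $2$, $\t=\{e\}$, and by Remark \ref{remark} the graph $\c$ is a star centred at $e$; deleting the unique dominating vertex leaves $\dd$ edgeless, whose complement is $K_{|G|-1}=L(K_{1,|G|-1})$. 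If $G=Q_8$, the three maximal cyclic subgroups have order $4$ and pairwise meet in $\t=\{e,-1\}=\mathrm{Dom}(\c)$, so $\dd=3K_2$, whose complement is the octahedron $K_{2,2,2}=L(K_4)$. In both cases $\overline{\dd}$ is a line graph.

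For necessity, assume $G$ is non-cyclic and $\dd$ is the complement of a line graph; by Lemma \ref{M(G) >2}, $|\m|\ge 3$. The first step is to prove $|M\setminus\t|\le 2$ for every $M\in\m$. Since $M$ is a clique of $\c$, the set $M\setminus\t$ is a clique of $\dd$; if $|M_0\setminus\t|\ge 3$ it contains a triangle. If $\phi(|M_0|)\ge 3$, three generators of $M_0$ together with the generator of any other maximal cyclic subgroup induce $\overline{\Gamma_1}$, using Remark \ref{remark}(iv). Otherwise $\phi(|M_0|)=2$, so $M_0\setminus\t$ contains a non-generator $x$; as $x\notin\t$ there is $M_1\ne M_0$ with $x\notin M_1$, and the generator $m_1$ of $M_1$ is then nonadjacent to $x$ and to both generators of $M_0$, again yielding an induced $\overline{\Gamma_1}$. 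Both possibilities contradict Lemma \ref{complement induced lemma}, so $|M\setminus\t|\le 2$. Since $\t$ is a subgroup contained in every $M$ and $M\ne\t$ (else $G$ is cyclic), we have $|M|\ge 2|\t|$, hence $|M|-|\t|\ge |\t|$; with $|M\setminus\t|\le 2$ this forces $|\t|\le 2$.

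I would then split on $|\t|$. If $|\t|=2$, every $M$ has order $4$ and $\t=\{e,t\}$ with $o(t)=2$. By Remark \ref{remark}(i), $t\in\mathrm{Dom}(\c)$, so $t$ is adjacent in $\c$ to every element; but two distinct involutions never lie in a common cyclic group, so $t$ is the unique involution of $G$. Every element lies in some $M$ of order $4$, so $G$ is a $2$-group of exponent $4$ with a unique involution, whence $G\cong Q_8$ by the classical cyclic-or-generalized-quaternion theorem. If $|\t|=1$, then each $M$ has order $2$ or $3$, distinct maximal cyclic subgroups meet trivially, and $\dd$ is a disjoint union of isolated vertices (order-$2$ subgroups) and edges (order-$3$ subgroups). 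I must exclude an order-$3$ subgroup $M_0=\langle a\rangle$. If $G$ has an involution $s$, then $s$ and $a$ cannot commute (a product would have order $6$, forcing a maximal cyclic subgroup of order $\ge 6$), so $\langle s,a\rangle\cong S_3$ contributes three involutions and hence three order-$2$ subgroups; with $M_0$ this gives $|\m|\ge 4$, and the edge $\{a,a^2\}$ together with generators of three further maximal cyclic subgroups induces $K_2\cup 3K_1$ in $\dd$, i.e. $\overline{\dd}\supseteq K_5-e$, which is not a line graph — a contradiction. If $G$ has no involution, then $G$ has odd order with $|\m|$ equal to the number of order-$3$ subgroups, which is $\equiv 1\pmod 3$ and $\ge 3$, hence $\ge 4$; the four corresponding edges again induce $K_2\cup 3K_1$, a contradiction. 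Thus no order-$3$ subgroup exists, every $M$ has order $2$, and $G\cong\mathbb{Z}_2\times\cdots\times\mathbb{Z}_2$.

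The step I expect to be the main obstacle is the case $|\t|=1$. Here the purely graph-theoretic information is insufficient: several configurations $aK_1\cup bK_2$ (for instance $2K_2\cup K_1$, giving $\overline{\dd}=K_{1,2,2}=L(K_4-e)$, or $K_2\cup 2K_1$, giving the diamond $K_4-e$) genuinely are complements of line graphs, so they cannot be ruled out by the forbidden list alone. Their elimination is authentically group-theoretic and relies on marshalling (i) $|\m|\neq 2$, (ii) the congruence $n_3\equiv 1\pmod 3$ for the number of order-$3$ subgroups, and (iii) the $\mathbb{Z}_6$-versus-$S_3$ dichotomy for a pair of elements of orders $2$ and $3$; arranging these so that every admissible pair $(a,b)$ is covered is the delicate part. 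A secondary care point is the uniform proof of $|M\setminus\t|\le 2$ at the residual orders $|M|\in\{4,6\}$, where one must exhibit the non-generator $x$ together with a maximal cyclic subgroup $M_1$ avoiding it.
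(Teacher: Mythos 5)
Your proof is correct, but it takes a genuinely different route from the paper's. The paper deduces this theorem from Proposition \ref{complement all non-cyclic}, which treats all four graphs $\po,\ppo,\c,\dd$ at once and splits on $|\m|$: for $|\m|\geq 4$ it forces $G\cong\mathbb{Z}_2\times\cdots\times\mathbb{Z}_2$ using the forbidden configuration $\overline{\Gamma_3}=K_2\cup 3K_1$ (the same one you use, since $\Gamma_3=K_5-e$), and for $|\m|=3$ it bounds each $|M_i|\leq 4$, hence $|G|\leq 10$, and then invokes an external classification (Table 1 of \cite{a.kumar2022complement}) of small groups with exactly three maximal cyclic subgroups to land on $Q_8$ and $\mathbb{Z}_2\times\mathbb{Z}_2$. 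You instead first prove $|M\setminus\t|\leq 2$ via the forbidden $\overline{\Gamma_1}=K_3\cup K_1$, deduce $|\t|\leq 2$, and split on $|\t|$: for $|\t|=2$ you get a $2$-group of exponent $4$ with a unique involution and conclude $Q_8$ from the classical cyclic-or-generalized-quaternion theorem; for $|\t|=1$ you eliminate order-$3$ maximal cyclic subgroups by producing three conjugate involutions (or by Frobenius counting in the odd-order case) and then hitting $K_2\cup 3K_1$ again. What each buys: the paper's argument is shorter and uniform over the four graph types, but it leans on an external table; yours is specific to $\dd$, exploits $\mathrm{Dom}(\c)=\t$ in a way the uniform argument cannot, and is self-contained modulo classical group theory. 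Your remark that configurations such as $2K_2\cup K_1$ (whose complement $K_{1,2,2}=L(K_4-e)$ \emph{is} a line graph) cannot be excluded by forbidden subgraphs alone correctly identifies why the $|\t|=1$ case requires genuine group theory --- precisely the work the paper outsources to the cited table.

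One small repair: your claim $\langle s,a\rangle\cong S_3$ is not forced. In that case every element of $G$ has order at most $3$, so $o(sa)\in\{2,3\}$ and $\langle s,a\rangle$ could also be $A_4$. This is harmless, since $A_4$ likewise contains three involutions; alternatively, argue directly that $s$, $asa^{-1}$ and $a^{-1}sa$ are pairwise distinct involutions (equality of any two would force $s$ to commute with $a$), which is all your count of maximal cyclic subgroups of order $2$ actually needs.
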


\section{Proof of the main results}
In this section, we provide the proof our main results. 
\subsection*{Proof of the Theorem \ref{line power graph}}
In order to prove the Theorem \ref{line power graph}, first we establish the following proposition.

\begin{proposition}{\label{pg and epg proposition}}
    Let $G$ be a finite non-cyclic group and let $\del \in \{\po , \c \}$. Then there does not exist any graph $\Gamma$ such that $\del = L(\Gamma)$.
\end{proposition}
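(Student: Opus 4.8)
The plan is to invoke Beineke's characterization (Lemma \ref{induced lemma}): to prove that neither $\po$ nor $\c$ is a line graph, it suffices to exhibit one of the nine forbidden induced subgraphs of Figure \ref{fig line graph}. The natural candidate is the claw $K_{1,3}$ (which appears as $\Gamma_1$ in that list), since both graphs inherit a large family of pairwise non-adjacent ``generator'' vertices from the maximal cyclic subgroups of $G$, all of which are joined to the identity. So I would work with a single configuration $\{e, m_1, m_2, m_3\}$ and check that it induces a claw in $\del$ for both choices of $\del$ simultaneously.

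First I would pin down that there are enough maximal cyclic subgroups to run the argument. Since $G$ is non-cyclic, $|\m|\neq 1$ (because $|\m|=1$ if and only if $G$ is cyclic), and Lemma \ref{M(G) >2} gives $|\m|\neq 2$; hence $|\m|\geq 3$. I would then choose three distinct maximal cyclic subgroups $M_1=\langle m_1\rangle$, $M_2=\langle m_2\rangle$, $M_3=\langle m_3\rangle$, with $m_i$ a generator of $M_i$.

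Next I would verify that the induced subgraph on $\{e, m_1, m_2, m_3\}$ is precisely $K_{1,3}$ with center $e$. On the one hand, $e\in\langle m_i\rangle$ for each $i$, so $e\sim m_i$ in $\po$; since $\po$ is a spanning subgraph of $\c$, the same adjacencies persist in $\c$, and thus $e$ is adjacent to each $m_i$ in $\del\in\{\po,\c\}$. On the other hand, by Remark \ref{remark}(iv) the generators of distinct maximal cyclic subgroups are non-adjacent in $\c$, and hence in $\po$; therefore $m_i\nsim m_j$ for all $i\neq j$. This yields exactly the claw, an induced $K_{1,3}$ in $\del$.

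Finally, because $K_{1,3}$ is one of the nine forbidden graphs, Lemma \ref{induced lemma} forces $\del$ to fail to be a line graph, for both $\del=\po$ and $\del=\c$, completing the proof. The argument carries essentially no obstacle: the only non-immediate ingredient is the bound $|\m|\geq 3$, which is supplied by Lemma \ref{M(G) >2}, while the two required adjacency facts are read off directly from Remark \ref{remark}.
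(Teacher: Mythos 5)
Your proposal is correct and follows exactly the paper's own argument: both use Lemma \ref{M(G) >2} to get $|\m|\geq 3$, take generators of three distinct maximal cyclic subgroups together with the identity, and conclude via Remark \ref{remark}(iv) that this set induces a $K_{1,3}$, which is forbidden by Lemma \ref{induced lemma}. The only difference is that you spell out the adjacency of $e$ to the generators and the spanning-subgraph relation between $\po$ and $\c$, details the paper leaves implicit.
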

 \begin{proof}
     Since $G$ is a finite non-cyclic group, by Lemma \ref{M(G) >2}, we have $|\m|\geq 3$. Let $ H_1=\langle x \rangle, \; H_2= \langle y \rangle$ and $H_3= \langle z \rangle $ be three maximal cyclic subgroups of $G$. Then the subgraph induced by the set $\{x,y,z,e\}$  of $\del$ is isomorphic to $K_{1,3}$ (see Remark \ref{remark}(iv)). Consequently, $\del$ is not a line graph of any graph (cf. Lemma \ref{induced lemma}).
 \end{proof}
 On combining Proposition \ref{pg and epg proposition} and {\rm \cite[Theorem 2.1]{a.bera2022}}, we obtain  Theorem \ref{line power graph}.
\subsection*{Proof of Theorem \ref{arbitrary power graph}}
    Let $G$ be a finite non-cyclic group which is not  generalized quaternion. Suppose $\ppo$ is a line graph of some graph $\Gamma$. Then by Theorem \ref{dominating power graph}$, V\left(\ppo \right)=G \setminus \{e\}$. On contrary assume that $G$ does not satisfy condition (i). Then $G$ has a maximal cyclic subgroup $M$ such that neither $|M|=6$ nor $|M|$ is a prime power. Let $|M|=p_1^{\alpha_1} p_2^{\alpha_2} \cdots p_k^{\alpha_k} (k \geq 2)$ be the prime power factorization of $|M|$. If $p_i>3$ for some $i \in[k]$, then $M$ contains at least $4$ elements of order $p_i$ and $4$ elements of order $p_i p_j$ for some $j \in[k] \setminus\{i\}$.
    Let $x_1, x_2, x_3, y_1, z_1 \in M$ such that $o\left(x_1\right)=o\left(x_2\right)=o\left(x_3\right)=p_i p_j$, $o\left(y_1\right)=p_j$ and $o(z)=p_i$. Then by Remark \ref{remark}(ii), the subgraph induced by the set $\left\{x_1, x_2, x_3, y_1, z_1\right\}$ of $\ppo$ is isomorphic to $\Gamma_3$; a contradiction (see Lemma \ref{induced lemma}). Thus, $p_i \leq 3$ for all $i \in[k]$. Therefore, $|M|=2^\alpha 3^\beta$ for some $\alpha \geq 1, \beta \geq 1$. Since $|M| \neq 6$, we get either $\alpha \geq 2$ or $\beta \geq 2$. First suppose that $\alpha \geq 2$. Let $x_1, x_2, y_1, y_2, z_1, z_2 \in M$ such that $o\left(x_1\right)=o\left(x_2\right)=4,\;  o\left(y_1\right)$ $=o\left(y_2\right)=3$ and $o\left(z_1\right)=o\left(z_2\right)=12$. Observe that the subgraph induced by the set $\left\{x_1, x_2, y_1, y_2, z_1, z_2\right\}$ of $\ppo$ is isomorphic to $\Gamma_6$; again a contradiction. Similarly,  we get a contradiction for $\beta \geq 2$. Thus, for each $M\in \m$, we obtain $|M|\in \{6, p^{\alpha}\}$. \\
    If possible, assume that $G$ does not satisfy condition (ii). Further, we have the following cases: \\
 \noindent\textbf{Case-1:} \emph{$G$ does not satisfy} (ii)(a). Then we have the following further two subcases:
 
\textbf{Subcase-1.1:} \emph{$\left|M_1 \cap M_2 \cap M_3\right| \geq 2$ for some $M_1, M_2, M_3 \in \mathcal{M}^{(2)}(G)$.}
Let $x \; (\neq e) \in M_1 \cap M_2 \cap M_3$ and $M_i=\left\langle m_i\right\rangle$ for each $i \in\{1,2,3\}$. Then by Remark \ref{remark}(ii), the subgraph induced by the vertex set $\left\{x, m_1, m_2, m_3\right\}$ of $\ppo$ is isomorphic to $K_{1,3}$, which is not possible.

\textbf{Subcase-1.2:} \emph{$| M_1 \cap M_2 | \geq 3$ for some $M_1, M_2 \in \mathcal{M}^{(2)}(G)$.} Suppose $x, y \in (M_1\cap  M_2)\setminus \{e\}$. Let $M_1=\left\langle m_1\right\rangle$ and $M_2=\left\langle m_2\right\rangle$. By Remark \ref{remark}, the subgraph induced by the set $\left\{x, y, m_1,m_1^{-1}, m_2, m_2^{-1}\right\}$ of $\ppo$ is isomorphic to $\Gamma_6 ;$ a contradiction. \\
Thus, $G$ must satisfy the condition (ii)(a).\\
\noindent\textbf{Case-2:} \emph{$G$ does not satisfy} (ii)(b). Then there exist two maximal cyclic subgroups $M_1 \in \m \setminus \mathcal{M}^{(2)}(G)$ and $M_2\in \m$ such that $\left|M_1 \cap M_2\right| \geq 2$. In view of the condition (i), we discuss the following subcases. 

\textbf{Subcase-2.1:} $\left|M_1\right|=6$. Then $\left|M_1 \cap M_2\right| \in\{2,3\}$. Let $\left|M_1 \cap M_2\right|=2$, $M_1=\langle x\rangle$ and $M_2=\langle y\rangle$. Then $x^3 \in M_1 \cap M_2$ because $x^3$ is the only element of order $2$ in $M_1$. The subgraph induced by the set $\left\{ x, x^2, x^3, x^4, x^5, y\right\}$ of $\ppo$ is isomorphic to $\Gamma_5$; a contradiction. If $\left|M_1 \cap M_2\right|=3$, then $x^2, x^4 \in M_1 \cap M_2$. The subgraph induced by the set $\left\{x, x^2, x^4, x^5, y, y^{-1}\right\}$ is isomorphic to $\Gamma_6$, which is not possible.

\textbf{Subcase-2.2:} $|M_1|=p^\alpha  (p> 2)$. Let $M_1=\langle x\rangle$, $M_2=\langle y\rangle$ and let $m\; (\neq e)\in M_1 \cap M_2 $. Then  the subgraph of $\ppo $ induced by the set $\{ x, x^{-1}, y, y^{-1}, m, m^{-1}\}$  is isomorphic to $\Gamma_6$; a contradiction.

Conversely, suppose that $G$ satisfies both the given conditions. On contrary, assume that $\ppo$ is not a line graph and so it has an induced subgraph $\Gamma$ isomorphic to one of the  nine graphs given in Figure \ref{fig line graph}. In view of Remark \ref{remark maximal}, first we prove the following claim.
\vspace{.1cm} \\
\textit{Claim $2.3$:} If $x\in V(\Gamma)$ such that $x\in M$ for some $M\in \m$, then $M$ must be a $2$-group.\\
\textit{Proof of claim:} If possible, let $x\in M$ and $M\in \m \setminus \mathcal{M}^{(2)}(G)$. Then by condition (ii)(b), we have $M\cap M'=\{e\}$ for all $M'\; (\neq M)\in \m$. Consequently, by Remark \ref{remark}(ii), we get $N(x) \subseteq M$ in $\ppo$. Therefore, if $x\sim y$, then $y\in M$ and so $N(y) \subseteq M$. Connectedness of $\Gamma$ implies that $V(\Gamma) \subseteq M$. For $|M|=p^\alpha$, where
$p$ is an odd prime, note that the subgraph $\ppo$ induced by any non-empty subset of $M$ is a complete graph. It implies that $\Gamma$ is a complete subgraph of $\ppo$ which is not true because $\Gamma \cong \Gamma_i$ for some $i$, where $1\leq i\leq 9$ (see Figure \ref{fig line graph}).\\
If $|M|=6$, then $M \cong \mathbb{Z}_6$. Observe that the subgraph of $\ppo$ induced by the set $M \backslash\{e\}$, shown in Figure \ref{fig Z6 power}, can not contain $\Gamma$ as an induced subgraph. Thus, the claim holds.
 \begin{figure}[ht]
    \centering
    \includegraphics[scale=.9]{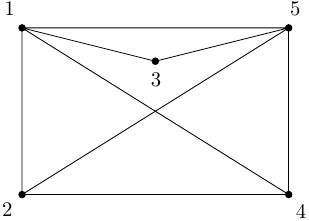}
    \caption{Subgraph induced by $M\setminus \{e\}$ of $\ppo$.}
    \label{fig Z6 power}
\end{figure}

Now if $\Gamma$ is isomorphic to $K_{1,3}$, as shown in Figure \ref{gamma},  then by Remark \ref{remark}(ii) and Claim $2.3$, there exist maximal cyclic subgroups $M_1, M_2, M_3 \in \mathcal{M}^{(2)}(G)$ such that $a, d \in M_1,\;  b, d \in M_2$ and $c, d \in M_3$. Note that $M_1 \neq M_2$. Otherwise, $a\sim b $ in $\ppo$ (see Remark \ref{remark}(ii)), which is not possible. Similarly, $M_2 \neq M_3$ and $M_1 \neq M_3$. Also, $d \in (M_1 \cap M_2 \cap M_3)\setminus \{e\}$; a contradiction to the condition (ii)(a). Thus, $\Gamma$ can not be isomorphic to $K_{1,3}$.

Now, suppose $\Gamma\cong \Gamma_i$ for some $i$, $2\leq i \leq 9$. Further, note that  $\Gamma$ has an induced subgraph isomorphic to $\Gamma'$ as shown in Figure \ref{gamma}.

\begin{figure}[ht]
    \centering
    \includegraphics[scale=.9]{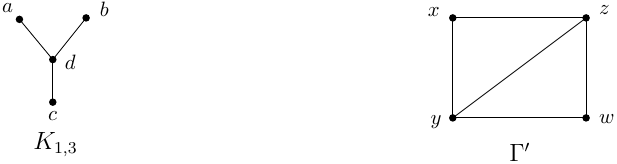}
    \caption{}
    \label{gamma}
\end{figure}
Since $x \sim y, \; y \sim z$ and $z \sim x$,  then by the definition of $\ppo$, it is easy to observe that  one of the following three holds: $x, y \in\langle z\rangle,\;  y, z \in\langle x\rangle,\;  x, z \in\langle y\rangle$. Consequently, there exists $M \in \mathcal{M}^{(2)}(G)$ such that $x, y, z \in M$. Similarly, there exists $M^{\prime} \in \mathcal{M}^{(2)}(G)$ such that $y, z, w \in M^{\prime}$. Notice that $M \neq M^{\prime}$.
Otherwise, $x \sim w$ in $\ppo$, which is not possible. But $y, z \in M \cap  M^{\prime}$;  a contradiction of condition (ii)(a).
Thus, $\ppo$ is a line graph. This completes our proof.

\vspace{.3cm}
For the dihedral group $D_{2n}= \langle a, b  :  a^{n} = b^2 = e,  ab = ba^{-1} \rangle$, note that $\mathcal{P}^{**}(D_6) = L(K_{1,2}\cup 3 K_2)$. It follows that Theorem $1.10$ of \cite{a.bera2022} is not correct. Moreover, we correct the same in the following corollary.
\begin{corollary}
    Let $G$ be the dihedral group $D_{2 n}$ of order $2n$. Then $\ppo$ is a line graph of some graph $\Gamma$ if and only if $n\in \{6,  p^\alpha\}$ for some prime $p$.
 \end{corollary}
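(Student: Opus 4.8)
The plan is to invoke Theorem~\ref{arbitrary power graph} directly. For $n\geq 2$ the group $D_{2n}$ is non-cyclic and is never generalized quaternion, since the reflections $a^ib$ furnish $n$ distinct involutions whereas a generalized quaternion group has a unique element of order $2$; for $n=1$ the group $D_2\cong\mathbb{Z}_2$ is cyclic and excluded. Thus $D_{2n}$ satisfies the hypotheses of Theorem~\ref{arbitrary power graph}, and it suffices to decide when conditions (i) and (ii) hold.

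First I would list the maximal cyclic subgroups of $D_{2n}$: the rotation subgroup $M_0=\langle a\rangle$ of order $n$, together with the $n$ reflection subgroups $\langle a^ib\rangle=\{e,a^ib\}$ for $0\leq i\leq n-1$. Because every element outside $\langle a\rangle$ is an involution, no cyclic subgroup of order greater than $2$ can contain a reflection; hence each $\langle a^ib\rangle$ is maximal cyclic, and these together with $M_0$ exhaust $\m$.

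The crux of the argument is the observation that any two distinct members of $\m$ intersect trivially. Indeed, $M_0$ consists solely of rotations while $\langle a^ib\rangle$ contains the single reflection $a^ib$, so $M_0\cap\langle a^ib\rangle=\{e\}$; and distinct reflection subgroups meet only in $e$ since $a^ib\neq a^jb$ for $i\neq j$. (When $n$ is even, the involution $a^{n/2}$ lies inside $M_0$ and so generates no additional maximal cyclic subgroup.) Granting this, every pairwise intersection among members of $\m$ has size $1$, whence every triple intersection is trivial and $|M_s\cap M_t|=1$ for all distinct $M_s,M_t$; therefore conditions (ii)(a) and (ii)(b) of Theorem~\ref{arbitrary power graph} are satisfied automatically, for every $n$.

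It remains only to pin down condition (i), that $|M|\in\{6,p^\alpha\}$ for each $M\in\m$. Each reflection subgroup has order $2=2^1$, so condition (i) is vacuous on these and reduces to the single requirement $|M_0|=n\in\{6,p^\alpha\}$. This is precisely the asserted dichotomy, completing the equivalence; I would close by noting that the case $n=3$ recovers $\mathcal{P}^{**}(D_6)=L(K_{1,2}\cup 3K_2)$ recorded above, consistent with $3$ being a prime power. I expect the only nontrivial step to be the clean verification of the trivial-intersection property, after which the reduction to condition (i) is immediate.
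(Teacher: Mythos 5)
Your proposal is correct and follows essentially the same route as the paper: both proofs reduce the statement to Theorem~\ref{arbitrary power graph} by identifying the maximal cyclic subgroups of $D_{2n}$ (the rotation subgroup $\langle a\rangle$ of order $n$ and the $n$ reflection subgroups of order $2$), observing that any two of them intersect trivially so that condition (ii) is automatic, and noting that condition (i) then collapses to the requirement $n\in\{6,p^\alpha\}$. Your write-up merely supplies more detail than the paper's (the verification that $D_{2n}$ is neither cyclic nor generalized quaternion, and the explicit maximality argument for the reflection subgroups), which is harmless.
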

 \begin{proof}
          First assume that $\mathcal{P}^{**}\left(D_{2 n}\right)$ is a line graph. Note that $D_{2 n}$ has one maximal cyclic subgroup $M=\langle a \rangle$ of order $n$, and $n$ maximal cyclic subgroups $M_i=\langle a^ib\rangle$, where $1\leq i \leq n$, of order $2$. Then by Theorem \ref{arbitrary power graph}, either $n=6$ or $p^\alpha$.
Conversely, if $n\in \{6,  p^\alpha\}$, then $G$ satisfies the condition (i). Note that the intersection of any two maximal cyclic subgroups of $D_{2n}$ is trivial. Thus, condition (ii) holds. By Theorem \ref{arbitrary power graph}, $\mathcal{P}^{**}\left(D_{2 n}\right)$ is a line graph.
\end{proof}
  \begin{corollary}
      Let $G$ be the  semidihedral group $SD_{8n} = \langle a, b  :  a^{4n} = b^2 = e,  ba = a^{2n -1}b \rangle$. Then $\mathcal{P}^{**}(SD_{8n})$ is not a line graph of any graph.
  \end{corollary}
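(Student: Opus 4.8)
The plan is to reduce the question to finding one forbidden induced subgraph, and the cleanest route runs through a single $K_{1,3}=\Gamma_1$. First, though, I would work out the arithmetic of $SD_{8n}$ (take $n\ge 2$, so that the group is genuinely non-abelian). Using $ba=a^{2n-1}b$ one computes $(a^ib)^2=a^{2ni}$; hence $a^ib$ has order $2$ when $i$ is even and order $4$ when $i$ is odd, and in the odd case $(a^ib)^2=a^{2n}$. From this the maximal cyclic subgroups fall out: the subgroup $M_0=\langle a\rangle$ of order $4n$; the cyclic subgroups $\langle a^ib\rangle=\{e,a^ib,a^{2n},a^{2n+i}b\}$ of order $4$ (one for each odd $i$, giving $n$ of them); and the order-$2$ subgroups $\langle a^ib\rangle$ for $i$ even. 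The decisive structural fact is that $a^{2n}$, the unique involution of $\langle a\rangle$, is central and coincides with the square of every order-$4$ element $a^ib$; thus $a^{2n}$ lies in $M_0$ and in \emph{every} order-$4$ maximal cyclic subgroup at once.

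With this in hand the finish is uniform. Since $SD_{8n}$ is non-abelian, it is non-cyclic, and it is not a generalized quaternion group (it has more than one involution, e.g.\ $b$ and $a^{2n}$), so by Theorem \ref{dominating power graph} the only dominating vertex of $\mathcal{P}(SD_{8n})$ is $e$, whence $V(\mathcal{P}^{**}(SD_{8n}))=SD_{8n}\setminus\{e\}$. I would then exhibit the claw on $\{a^{2n},\,a,\,ab,\,a^3b\}$: the element $a^{2n}$ is a power of each of $a$, $ab$, $a^3b$ (directly for $a$, and via $(ab)^2=(a^3b)^2=a^{2n}$ for the other two), so by Remark \ref{remark}(ii) it is adjacent to all three; meanwhile $a$, $ab$, $a^3b$ generate three \emph{distinct} maximal cyclic subgroups (distinctness of $\langle ab\rangle$ and $\langle a^3b\rangle$ needs only $3\not\equiv 2n+1 \pmod{4n}$, i.e.\ $n\ge 2$), so by Remark \ref{remark}(iv) they are pairwise non-adjacent. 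This induced $K_{1,3}$ contradicts Lemma \ref{induced lemma}, so $\mathcal{P}^{**}(SD_{8n})$ is not a line graph.

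Equivalently, one may phrase the obstruction through Theorem \ref{arbitrary power graph}, exactly as in the preceding dihedral corollary: if $n$ is not a power of $2$ then $|M_0|=4n\notin\{6,p^\alpha\}$ and condition (i) fails, while if $n$ is a power of $2$ then $M_0\in\mathcal{M}^{(2)}(SD_{8n})$ and, choosing two distinct order-$4$ subgroups $M,M'$, the common involution gives $a^{2n}\in M_0\cap M\cap M'$, so $|M_0\cap M\cap M'|\ge 2$, contradicting condition (ii)(a). I expect the only place where care is needed is the order computation $(a^ib)^2=a^{2ni}$ and the consequent listing of maximal cyclic subgroups; everything after the isolation of the shared central involution $a^{2n}$ is immediate, which is why the direct $K_{1,3}$ argument is preferable—it sidesteps the split into the two cases entirely.
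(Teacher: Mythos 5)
Your proposal is correct, and it actually contains two proofs. Your second, ``equivalently'' paragraph is precisely the paper's own proof: the paper exhibits the three maximal cyclic subgroups $M_1=\langle a\rangle$, $M_2=\langle ab\rangle$, $M_3=\langle a^3b\rangle$ with $M_1\cap M_2\cap M_3=\{e,a^{2n}\}$ and then cites Theorem \ref{arbitrary power graph} --- in fact more tersely than you do, since the paper does not spell out the case split you give (condition (ii)(a) of that theorem literally applies only to triples in $\mathcal{M}^{(2)}(G)$, so when $n$ is not a power of $2$ one must instead observe that $|M_1|=4n\notin\{6,p^\alpha\}$, i.e.\ condition (i), or alternatively (ii)(b), fails); on this point your write-up is more careful than the paper's. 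Your preferred primary route --- the explicit claw on $\{a^{2n},a,ab,a^3b\}$ combined with Lemma \ref{induced lemma} --- is genuinely more direct: it bypasses Theorem \ref{arbitrary power graph} entirely, works uniformly in $n$ with no case distinction, and amounts to specializing Subcase-1.1 of that theorem's proof (the claw $\{x,m_1,m_2,m_3\}$ built from a common non-identity element of three distinct maximal cyclic subgroups and their generators) to the semidihedral group. What the direct route costs you, and what you correctly pay, is the hands-on verification that $V(\ppo)=SD_{8n}\setminus\{e\}$ (via Theorem \ref{dominating power graph}, using that $SD_{8n}$ is neither cyclic nor generalized quaternion), the computation $(a^ib)^2=a^{2ni}$, and the distinctness of $\langle ab\rangle$ and $\langle a^3b\rangle$ for $n\ge 2$; the restriction $n\ge 2$ is indeed necessary, since $SD_8\cong\mathbb{Z}_2\times\mathbb{Z}_{2^2}$ does have a proper power graph that is a line graph.
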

 \begin{proof}
     Since $M_1=\langle a\rangle$, $M_2=\langle ab\rangle$ and $M_3=\langle a^3b\rangle$ are three maximal cyclic subgroups of $SD_{8n}$ such that  $M_1\cap M_2\cap M_3=\{e, a^{2n}\}$. By Theorem \ref{arbitrary power graph}, $\mathcal{P}^{**}(SD_{8n})$ is not a line graph of any graph.

 \end{proof}

\subsection*{Proof of Theorem \ref{proper power q4n}}
    Let $G$ be a generalized quaternion group such that $\ppo$ is a line graph. Then $V(\ppo)=G\setminus Z(G)$ and  $G$ has one maximal cyclic subgroup of order of $2 n$ and $n$ maximal cyclic subgroups of order $4$. Let $M$ be the maximal cyclic subgroup of order $2 n$. If possible, assume that $n$ is divisible by two primes $p$ and $q$ such that $p< q$. Let $x_1, x_2, y_1, y_2, z_1, z_2 \in M$ such that $o\left(x_1\right)=o\left(x_2\right)=2 p, \;o\left(y_1\right)=o\left(y_2\right)=2 p q$ and $o\left(z_1\right)=o\left(z_2\right)=q$. By Remark \ref{remark}(ii), the subgraph induced by the set $\left\{x_1, x_2, y_1, y_2, z_1, z_2\right\}$ is isomorphic to $\Gamma_6$; a contradiction. Thus, $n=p^\alpha$.  If $p>2$ and $\alpha \geq 2$, then note that $M$ has at least two elements $x, x'$ of order $2 p$, two elements $y,y'$  of order $p^2$ and two elements $z, z'$ of order $2 p^2$. The induced subgraph by the set $\{x,x',y,y',z,z'\}$ is isomorphic to $\Gamma_6$, again a contradiction. Thus, either $n=2^k$ or $n$ is an odd prime.
 Conversely, If $n$ is an odd prime, then
$
\mathcal{P}^{**}\left(Q_{4 n}\right)=K_{2 n-2} \cup n K_2=L\left(K_{1,2 n-2} \cup n K_{1,2}\right) \text {. }$ If $n=2^k$, then $\mathcal{P}^{**}\left(Q_{4 n}\right)$ is a line graph (cf. {\rm \cite[Theorem 1.9]{a.bera2022}}). This completes the proof.

\vspace{.3cm}
Now we intend to classify all the groups $G$ such that the enhanced power graph $\c$ and the proper enhanced power graph $\dd$ are line graphs, respectively. In order to prove the Theorem \ref{line EPG}, first note that if $G$ is a cyclic group of order $n$, then by Theorem \ref{complete}, we get $\c \cong K_n$. Further,  note that 
$K_n=L(K_{1 , n})$. Consequently, we have the following lemma.
 \begin{lemma}{\label{epg complete lemma}}
    Let $G$ be a finite cyclic group. Then $\c$ is a line graph of some graph. 
 \end{lemma}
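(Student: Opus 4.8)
The plan is to reduce the statement directly to the completeness criterion already recorded in Theorem \ref{complete}, combined with the elementary identity $K_n = L(K_{1,n})$ noted just above the lemma. First I would set $n = |G|$ and invoke Theorem \ref{complete}: since $G$ is cyclic, its enhanced power graph $\c$ is complete, so $\c \cong K_n$.

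It then remains only to exhibit $K_n$ as the line graph of some graph. The natural candidate is the star graph $K_{1,n}$. Indeed, all $n$ edges of $K_{1,n}$ are incident to its common central vertex, hence they are pairwise incident; their images therefore form a clique on $n$ vertices in $L(K_{1,n})$, giving $K_n = L(K_{1,n})$. Combining the two steps yields $\c \cong K_n = L(K_{1,n})$, which is exactly the assertion.

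Because the argument is a two-line reduction, there is no genuine obstacle here; the only point requiring a moment's care is the bookkeeping that the number of edges of $K_{1,n}$ equals the number of vertices of $K_n$, which is immediate. As an alternative to the explicit realization, one could instead verify that $K_n$ is a line graph by appealing to Lemma \ref{induced lemma}: none of the nine forbidden graphs of Figure \ref{fig line graph} is complete, so none can occur as an induced subgraph of the complete graph $\c \cong K_n$. However, the direct identification $\c \cong K_n = L(K_{1,n})$ is the most transparent route and is the one I would present.
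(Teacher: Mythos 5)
Your proof is correct and follows essentially the same route as the paper: the paper likewise invokes Theorem \ref{complete} to get $\c \cong K_n$ and then observes $K_n = L(K_{1,n})$. Your explicit verification of the identity $K_n = L(K_{1,n})$ (all edges of the star share the central vertex) simply fills in a detail the paper states without proof.
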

 Thus, by Proposition \ref{pg and epg proposition} and Lemma \ref{epg complete lemma}, we obtain Theorem \ref{line EPG}. \\

If $G$ is cyclic, then $\dd$ is an empty graph (cf. Theorem \ref{complete}). Consequently, we now characterize all the  finite non-cyclic groups $G$ whose proper enhanced power graphs $\dd$ are line graphs (see Theorem \ref{arbitrary M EPG}).

\subsection*{Proof of  Theorem \ref{arbitrary M EPG}}

    First, suppose that $\dd$ is a line graph of some graph $\Gamma$. On contrary, assume that $G$ does not satisfy condition (i). Then $G$ has two maximal cyclic subgroups $M_1$ and $M_2$ such that $|(M_1\cap M_2) \setminus \t |\geq 2$. Since $e\in \t $, we have $|M_1|\geq 3$ and $|M_2| \geq 3$. Suppose $M_1 =\langle x_1 \rangle = \langle y_1 \rangle$ and  $M_2= \langle x_2 \rangle = \langle y_2 \rangle$. Further, let $x,y \in (M_1\cap M_2) \setminus \t $. Then the subgraph induced by the set $\{x,y,x_1,y_1,x_2, y_2\}$ is isomorphic to $\Gamma_6$ (see Remark \ref{remark}); a contradiction. Thus, $G$ must satisfy the condition (i). Now suppose that $G $ does not satisfy the condition (ii). Then $G$ has three maximal cyclic subgroups $M', M'', M'''$ such that $|(M'\cap M''\cap M''') \setminus \t |\geq 1$. Assume that $m\in (M'\cap M''\cap M''') \setminus \t  $. Consider $ M'= \langle m' \rangle, \;  M''= \langle m'' \rangle$ and $ M'''= \langle m''' \rangle $.  Then the subgraph induced by the set $\{ m, m', m'', m'''\}$ is isomorphic to $\Gamma_1$ (cf. Remark \ref{remark}); a contradiction. 

     Conversely, suppose that $G$ satisfies (i) and (ii). On contrary assume that $\dd$ is not a line graph. Then by Lemma \ref{induced lemma}, $\dd$ has an induced subgraph isomorphic to one of the nine graphs given in Figure \ref{fig line graph}. Let $\dd$ has an induced subgraph isomorphic to $K_{1,3}$ given in Figure \ref{gamma}.  Consequently, $\langle a,d \rangle , \langle b,d \rangle$ and $\langle c , d \rangle$ are cyclic subgroups of $G$. Let $M_1, M_2$ and $M_3$ be maximal cyclic subgroups containing  $\langle a,d \rangle , \langle b,d \rangle$ and $\langle c , d \rangle$, respectively. Note that $M_1\neq M_2$. Otherwise, $a\sim b$ in $\dd$. Similarly, $M_2\neq M_3$ and $M_3\neq M_1$. Since $d\in V(\dd)$, we obtain $d\notin \t $. It follows that $d\in (M_1\cap M_2\cap M_3) \setminus \t $; a contradiction of condition (ii). Thus, $\dd$ can not contain an induced subgraph isomorphic to $K_{1,3}$. Now suppose that $\dd$ has an induced subgraph isomorphic to one of the remaining eight graphs in Figure \ref{fig line graph}. Then observe that $\dd$ has an induced subgraph isomorphic to $\Gamma '$ as shown in Figure \ref{gamma}. Note that that $x, y$ and $z$ belong to a maximal cyclic subgroup of $G$. On contrary, assume that $x, y, z \notin M$ for any $M\in \m$. Since $x\sim y, y\sim z$ and $z\sim x$ in $\dd$, by Remark \ref{remark}(i), we have three maximal cyclic subgroups $M_4, M_5$ and $M_6$ such that $x,y \in M_4, \  y,z \in M_5$ and $z,x\in M_6$. Thus, $x\in (M_4\cap M_6)\setminus \t $. If $o(x)\geq 3$, then $ x^{-1}\; (\neq x)\in M_4\cap M_6$. Further, note that $N(x)=N(x^{-1})$ and so $x^{-1}\notin \t $. It follows that $x^{-1}\in (M_4\cap M_6)\setminus \t $; a contradiction of condition (i). Consequently, $o(x)=2$. Similarly, $o(y)=o(z)=2$. But $M_4$ cannot contain two elements of order $2$. Thus, $x, y, z \in M' $ for some $M'\in \m$. By  similar argument, we get $y, z, w\in M''$ for some $M''\in \m$. Note that $M'\neq M''$. Otherwise, $x\sim w$ in $\dd$. Also, $y,z\in (M'\cap M'')\setminus \t $; again a contradiction. Thus, $\dd$ cannot contain an induced subgraph isomorphic to the graph $\Gamma '$ (see Figure \ref{gamma}). This completes our proof.

\begin{example}
     For $n \geq 2$, consider the \emph{semidihedral group} $SD_{8n} = \langle a, b  :  a^{4n} = b^2 = e,  ba = a^{2n -1}b \rangle.$
     Since $SD_{8n}$ has a maximal cyclic subgroup $M=\langle a^2b \rangle$ of order $2$, therefore $\mathcal{T}(SD_{8n})=\{e\}$.  Consider $M_1=\langle a\rangle$, $M_2=\langle ab\rangle$ and $M_3=\langle a^3b\rangle$ are three maximal cyclic subgroups of $SD_{8n}$. Then note that $M_1\cap M_2\cap M_3=\{e, a^{2n}\}$. Thus, $SD_{8n}$ does not satisfy the condition (ii) of Theorem \ref{arbitrary M EPG}, and so $\mathcal{P}^{**}_E(SD_{8n})$ is not a line graph of any graph.
\end{example}
\begin{corollary}{\label{any order corollary}}
    Let $G$ be a finite non-cyclic group such that the intersection of any two maximal cyclic subgroups is equal to $\t $. Then $\dd = L(\Gamma)$ for some graph $\Gamma$.
\end{corollary}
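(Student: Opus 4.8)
The plan is to deduce this corollary as an immediate consequence of Theorem~\ref{arbitrary M EPG}, which has just been established. The key observation is that the hypothesis---that the intersection of any two maximal cyclic subgroups equals $\t$---forces both conditions (i) and (ii) of that theorem to hold vacuously, so the theorem directly yields that $\dd$ is a line graph.

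First I would verify condition (i). Let $M_i, M_j \in \m$ be any two maximal cyclic subgroups. By the hypothesis, $M_i \cap M_j = \t$. Therefore $(M_i \cap M_j) \setminus \t = \emptyset$, whence $|(M_i \cap M_j) \setminus \t| = 0 \leq 1$, and condition (i) is satisfied. Next I would verify condition (ii). For any three maximal cyclic subgroups $M_i, M_j, M_k \in \m$, we have $M_i \cap M_j \cap M_k \subseteq M_i \cap M_j = \t$ by the hypothesis. Hence $(M_i \cap M_j \cap M_k) \setminus \t = \emptyset$, so $|(M_i \cap M_j \cap M_k) \setminus \t| = 0$, and condition (ii) holds as well.

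Having checked that $G$ satisfies both (i) and (ii), I would invoke the sufficiency direction of Theorem~\ref{arbitrary M EPG} to conclude that $\dd = L(\Gamma)$ for some graph $\Gamma$, completing the proof.

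There is essentially no obstacle here: the corollary is a specialization of Theorem~\ref{arbitrary M EPG} to the case where every pairwise intersection is as small as possible. The only point requiring a moment's care is the containment argument for triple intersections, namely that $M_i \cap M_j \cap M_k$ sits inside the pairwise intersection $M_i \cap M_j$, which then equals $\t$; this is what makes the set-difference empty and guarantees condition (ii). Everything else is a direct substitution into the hypotheses of the theorem.
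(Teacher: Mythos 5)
Your proposal is correct and matches the paper's intent exactly: the paper states this corollary without proof, treating it as an immediate specialization of Theorem~\ref{arbitrary M EPG}, and your verification that the hypothesis forces $(M_i\cap M_j)\setminus \t=\emptyset$ and $(M_i\cap M_j\cap M_k)\setminus \t\subseteq (M_i\cap M_j)\setminus \t=\emptyset$ is precisely the intended deduction.
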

\begin{example}
    For $n \geq 2$, the \emph{generalized quaternion group} $Q_{4n} = \langle a, b  :  a^{2n}  = e, a^n= b^2, ab = ba^{-1} \rangle.$
Then $Q_{4n}$ has one maximal cyclic subgroup of order $2n$ and $n$ maximal cyclic subgroups of order $4$. Observe that the intersection of any two maximal cyclic subgroups of $Q_{4n}$ is $\{e, a^n\}$ and so $\mathcal{T}(Q_{4n})=\{e, a^n\}$. Consequently, $P^{**}_E(Q_{4n})$ is a line graph of some graph $\Gamma$. Indeed, $P^{**}_E(Q_{4n})=K_{2n-2}\cup nK_2= L(K_{1,2n-2}\cup nK_{1,2})$.
\end{example}
\begin{example}
    For $n \geq 3$, consider the \emph{dihedral group} $D_{2n} = \langle a, b  :  a^{n} = b^2 = e,  ab = ba^{-1} \rangle.$
Then $D_{2n}$ has one maximal cyclic subgroup of order $n$ and $n$ maximal cyclic subgroups of order $2$. Consequently, the intersection of any two maximal cyclic subgroups of $D_{2n}$ is trivial. It follows that $\mathcal{T}(D_{2n})=\{e\}$. Thus, by Corollary \ref{any order corollary}, $P^{**}_E(D_{2n})$ is a line graph of some graph $\Gamma$. In fact, $P^{**}_E(D_{2n})= K_{n-1}\cup nK_1= L(K_{1,n-1}\cup nK_2)$.
\end{example}
The converse of the  Corollary \ref{any order corollary} need not be true in general. For instance, if $G= \mathbb{Z}_2 \times \mathbb{Z}_{2^2}$, then $\dd$ is a line graph of some graph but $\mathbb{Z}_2 \times \mathbb{Z}_{2^2}$ has two maximal cyclic subgroups whose intersection is non-trivial. However, if $G$ is of odd order, then the converse is also true.

\begin{corollary}{\label{odd order condition}}
    Let $G$ be a finite group of odd order. Then $\dd$ is a line graph of some graph $\Gamma$ if and only if the intersection of any two maximal cyclic subgroups is equal to $\t $.
\end{corollary}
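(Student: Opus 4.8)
The plan is to prove Corollary \ref{odd order condition} by invoking Theorem \ref{arbitrary M EPG} and Corollary \ref{any order corollary}, and showing that for groups of \emph{odd} order the two conditions (i) and (ii) of Theorem \ref{arbitrary M EPG} collapse to the single statement that every pairwise intersection of maximal cyclic subgroups equals $\t$. The backward direction is immediate: if every two maximal cyclic subgroups meet exactly in $\t$, then Corollary \ref{any order corollary} gives that $\dd$ is a line graph, regardless of parity. So the real content is the forward direction, where I would assume $\dd$ is a line graph and deduce the intersection condition, using that $|G|$ is odd.

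For the forward direction, I would argue as follows. Suppose $\dd$ is a line graph; by Theorem \ref{arbitrary M EPG}, $G$ satisfies conditions (i) and (ii). Take any two distinct maximal cyclic subgroups $M_i, M_j \in \m$; I must show $M_i \cap M_j = \t$. On the contrary, suppose there exists $x \in (M_i \cap M_j) \setminus \t$. Since $|G|$ is odd, every nontrivial element has order at least $3$, so in particular $o(x) \geq 3$ and hence $x^{-1} \neq x$. Because $x \in M_i \cap M_j$ and both are cyclic groups containing $x$, we also have $x^{-1} \in M_i \cap M_j$. The crucial observation (already used in the proof of Theorem \ref{arbitrary M EPG}) is that $N(x) = N(x^{-1})$ in $\dd$, so $x^{-1} \notin \t$ as well, whence $x^{-1} \in (M_i \cap M_j) \setminus \t$. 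Thus $x$ and $x^{-1}$ are two distinct elements of $(M_i \cap M_j) \setminus \t$, giving $|(M_i \cap M_j) \setminus \t| \geq 2$, which directly contradicts condition (i). Therefore no such $x$ exists and $M_i \cap M_j = \t$, as required.

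The main structural point — and the step I expect to be the crux — is the parity argument that upgrades a single nontrivial element of $(M_i \cap M_j)\setminus\t$ into a \emph{pair} $\{x, x^{-1}\}$ of distinct such elements. This is exactly where oddness of $|G|$ is indispensable: if $|G|$ were even, an involution $x = x^{-1}$ could sit alone in $(M_i \cap M_j)\setminus\t$, satisfying condition (i) without forcing $M_i\cap M_j = \t$, which is precisely why the converse of Corollary \ref{any order corollary} fails in general (as the remark before this corollary illustrates with $G = \mathbb{Z}_2 \times \mathbb{Z}_{2^2}$). Note also that condition (ii) is automatically subsumed here, since condition (i) already forces all pairwise intersections to coincide with $\t$, and triple intersections are contained in pairwise ones; so for odd order there is no additional content in (ii) beyond what (i) provides.

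In writing this up I would keep the argument short, essentially a three-line deduction: state the easy backward implication via Corollary \ref{any order corollary}, then handle the forward implication by the $\{x, x^{-1}\}$ pairing argument against condition (i) of Theorem \ref{arbitrary M EPG}. The $N(x) = N(x^{-1})$ fact and the order-$\geq 3$ consequence of odd order are the only two ingredients, both of which appear verbatim in the proof of Theorem \ref{arbitrary M EPG}, so I would simply cite that proof's reasoning rather than re-derive it.
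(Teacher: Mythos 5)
Your proof is correct and takes essentially the same route as the paper: the forward direction is the identical $\{x,x^{-1}\}$ pairing argument against condition (i) of Theorem \ref{arbitrary M EPG}, using $x^{-1}\in M_1\cap M_2$ (closure of the subgroup), $N(x)=N(x^{-1})$ to get $x^{-1}\notin\t$, and oddness of $|G|$ to ensure $x\neq x^{-1}$, with the backward direction supplied by Corollary \ref{any order corollary}. If anything, you spell out the parity step ($o(x)\geq 3$, hence $x\neq x^{-1}$) that the paper leaves implicit.
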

\begin{proof}

    Suppose that $\dd$ is a line graph of some graph $\Gamma$. On contrary, assume that there exist two maximal cyclic subgroups $M_1$ and $M_2$ such that $x\in (M_1\cap M_2)\setminus \t $. Since $M_1\cap M_2$ is a subgroup of $G$, we obtain  $x^{-1}\in M_1\cap M_2$. Also, $N(x)=N(x^{-1})$. It follows that $x^{-1}\in (M_1\cap M_2)\setminus \t $; a contradiction (see Theorem \ref{arbitrary M EPG}). 
\end{proof}

\subsection*{Proof of Theorem \ref{nilpotent proper EPG}} In order to prove Theorem \ref{nilpotent proper EPG}, first we prove some necessary results.
\begin{lemma}{\label{one non cyclic}}
    Let $G$ be a finite non-cyclic nilpotent group. If  $\dd$ is a line graph, then there exists a unique Sylow subgroup of $G$ which is non-cyclic.
\end{lemma}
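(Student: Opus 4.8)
The plan is to argue by contradiction using the intersection criterion of Theorem \ref{arbitrary M EPG}. Write $G = P_1 \times \cdots \times P_r$ as the direct product of its Sylow subgroups (Theorem \ref{nilpotent}). Since $G$ is non-cyclic, at least one $P_i$ is non-cyclic, and I want to rule out that two distinct Sylow subgroups are non-cyclic. The first step is to record the structure of the maximal cyclic subgroups: by Lemma \ref{on maximal connectivity} every $M \in \m$ factors as $M = M^{(1)} \times \cdots \times M^{(r)}$ with $M^{(i)}$ a maximal cyclic subgroup of $P_i$, and conversely every such product is maximal cyclic in $G$ (the orders $|M^{(i)}|$ are pairwise coprime, so the product is cyclic, and maximality follows factorwise). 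Intersecting over all such tuples then yields the key identity $\t = \mathcal{T}(P_1) \times \cdots \times \mathcal{T}(P_r)$.

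Suppose for contradiction that two Sylow subgroups, say $P$ and $P'$ for distinct primes $p \ne p'$, are both non-cyclic while $\dd$ is a line graph. Since $p \ne p'$, at most one of them is the prime $2$; relabel so that $P$ is the Sylow subgroup for an odd prime. Because $P$ is a non-cyclic $p$-group with $p$ odd, it has a maximal cyclic subgroup $A = \langle a \rangle$ with $o(a) = p^k \ge 3$; and since $P$ is non-cyclic it has a second maximal cyclic subgroup, so Remark \ref{remark maximal} gives that $a$ lies in no other maximal cyclic subgroup of $P$, whence $a \notin \mathcal{T}(P)$ and likewise $a^{-1} \notin \mathcal{T}(P)$. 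Because $P'$ is non-cyclic it has two distinct maximal cyclic subgroups $B_1 \ne B_2$.

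The heart of the argument is to exhibit a violation of condition (i). Fixing an arbitrary maximal cyclic subgroup in each remaining factor, I form the two maximal cyclic subgroups $M_1, M_2$ of $G$ that agree in every coordinate except the $P'$-coordinate, where they use $B_1$ and $B_2$ respectively; these are maximal cyclic by the first step, and $M_1 \ne M_2$. The elements $g_1, g_2 \in G$ that carry $a$, respectively $a^{-1}$, in the $P$-coordinate and the identity in every other coordinate both lie in $A \times \{e\} \times \cdots \subseteq M_1 \cap M_2$; they are distinct since $o(a) \ge 3$, and neither lies in $\t = \mathcal{T}(P_1) \times \cdots \times \mathcal{T}(P_r)$ because their $P$-coordinate avoids $\mathcal{T}(P)$. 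Hence $|(M_1 \cap M_2) \setminus \t| \ge 2$, contradicting condition (i) of Theorem \ref{arbitrary M EPG}.

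The main obstacle is the degenerate situation in which the ``two elements $a, a^{-1}$'' device collapses because the chosen maximal cyclic subgroup has order $2$ (so $a = a^{-1}$); this is precisely why I do not simply vary a coordinate inside an arbitrary non-cyclic factor. The resolution is the observation that two non-cyclic Sylow subgroups correspond to distinct primes, so at least one is associated to an odd prime and thereby supplies an element of order $\ge 3$; the other non-cyclic factor is used only to manufacture the distinct subgroups $M_1, M_2$. Having reached a contradiction, I conclude that at most one Sylow subgroup of $G$ is non-cyclic, and since $G$ is non-cyclic exactly one is, which proves the lemma.
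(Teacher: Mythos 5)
Your proof is correct, but it takes a genuinely different route from the paper's. The paper argues directly at the graph level: using Lemma \ref{M(G) >2} it picks three maximal cyclic subgroups of one non-cyclic Sylow factor $P_1$, forms their products with fixed maximal cyclic subgroups of the remaining factors (Lemma \ref{on maximal connectivity}), and shows that the three generators together with the hub element $t=e_1x_2\cdots x_r$ induce a $K_{1,3}=\Gamma_1$ in $\dd$; the second non-cyclic factor $P_2$ is used only to certify that $t$ is not a dominating vertex (via the cited adjacency criterion for enhanced power graphs of direct products, \cite[Theorem 5.4]{a.kumar2023}), so that $t$ genuinely lies in $V(\dd)$. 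You instead reduce everything to the already-established characterization Theorem \ref{arbitrary M EPG}: you prove the converse of Lemma \ref{on maximal connectivity} and the product formula $\t=\mathcal{T}(P_1)\times\cdots\times\mathcal{T}(P_r)$, then exhibit two maximal cyclic subgroups of $G$ differing only in the $P'$-coordinate whose intersection contains the two elements with $P$-coordinates $a$ and $a^{-1}$, neither in $\t$, violating condition (i). What each approach buys: yours avoids the external citation and any fresh forbidden-subgraph analysis (the graph theory is all quarantined inside Theorem \ref{arbitrary M EPG}), at the cost of proving the converse decomposition and needing the odd-prime relabeling to ensure $a\neq a^{-1}$; the paper's argument is uniform in the prime and self-contained at the graph level. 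A small simplification worth noting: you could drop the parity trick entirely by invoking condition (ii) of Theorem \ref{arbitrary M EPG} instead of (i) --- taking three maximal cyclic subgroups $B_1,B_2,B_3$ of $P'$ (Lemma \ref{M(G) >2}) yields three maximal cyclic subgroups of $G$ whose common intersection contains the single element with $P$-coordinate $a\notin\mathcal{T}(P)$, which already contradicts (ii) whether or not $a=a^{-1}$.
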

\begin{proof}
Let $G=P_1P_2  \cdots  P_r$ be a finite nilpotent group such that $P_i$'s are Sylow $p_i$-subgroups of $G$. On contrary, assume that $G$ has two Sylow subgroups which are non-cyclic. Without loss of generality, suppose that $P_1$ and $P_2$ are non-cyclic. It implies that $\mathcal{M}(P_i)\geq 3$ for every $i\in \{1,2\}$ (cf. Lemma \ref{M(G) >2}). Consider $M_1, M_1', M_1'' \in \mathcal{M}(P_1)$ such that 
$M_1= \langle x_1\rangle $, $M_1'=\langle y_1\rangle$ and $M_1''=\langle z_1 \rangle $. Since $G$ is non-cyclic, we get $M, M', M''\in \m$ where $M=M_1M_2\cdots M_r,\; M'=M_1'M_2\cdots M_r$ and $M''=M_1''M_2\cdots M_r$ (cf. Lemma \ref{on maximal connectivity}). Assume that $M_i =\langle x_i\rangle$ for $i\in [r]\setminus \{1\}$. Note that $M= \langle x\rangle  ,\; M'= \langle y\rangle $ and  $M''=\langle z \rangle $, where $x=x_1x_2\cdots x_r, \; y=y_1x_2\cdots x_r$ and   $z=z_1x_2\cdots x_r$. By Remark $\ref{remark}$, $x\nsim y$, $y\nsim z$ and $z\nsim x$ and so $x,y,z\in V(\dd)$. Now consider 
 $t=e_1x_2\cdots x_r$ and $t'=e_1x_2'x_3\cdots x_r$, where $e_1$ is the identity element of $P_1$,  $\langle x_2\rangle \in \mathcal{M}(P_2)$ and $\langle x_2'\rangle \neq \langle x_2\rangle $. By Remark \ref{remark}(iv), we have $x_2\nsim x_2'$ in $\mathcal{P}_E(P_2)$ and so  $t'\nsim t $ in $\c$ (cf. {\rm \cite[Theorem 5.4]{a.kumar2023}}). Also, $x\sim t$, $y\sim t$ and $z\sim t$  in $\c$ and so in $\dd$.
Thus, the subgraph induced by the set $\{x,y,z,t\}$ is isomorphic to $\Gamma_1$ (see Figure \ref{fig line graph}); a contradiction. Thus, the result holds.
\end{proof}
\begin{proposition}{\label{proper EPG abelian}}
     Let $G$ be a finite non-cyclic abelian group. Then $\dd$ is a line graph of some graph $\Gamma$ if and only if $G$ is isomorphic to one of the following groups:
    \begin{itemize}
        \item[(i)] $\mathbb{Z}_2\times \mathbb{Z}_{2^2}$ 
        \item[(ii)]  $\mathbb{Z} _{2^2}\times \mathbb{Z}_{2^2}$ 
        \item[(iii)] $\mathbb{Z}_n\times \mathbb{Z}_p\times \mathbb{Z}_p \times \cdots \times\mathbb{Z}_p$, where $p$ is a prime and $\mathrm{gcd}(n,p)=1$. 
        \end{itemize}
\end{proposition}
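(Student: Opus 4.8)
The plan is to combine the intersection criterion of Theorem \ref{arbitrary M EPG} with the structural reduction supplied by Lemma \ref{one non cyclic}. Since abelian groups are nilpotent (Theorem \ref{nilpotent}), Lemma \ref{one non cyclic} says that whenever $\dd$ is a line graph the non-cyclic group $G$ has a unique non-cyclic Sylow subgroup; so I would write $G=\mathbb{Z}_n\times P$ with $P$ the non-cyclic Sylow $p$-subgroup and $\mathrm{gcd}(n,p)=1$, and use Lemma \ref{on maximal connectivity} to see that the maximal cyclic subgroups of $G$ are exactly the $\mathbb{Z}_n\times M$ with $M\in\mathcal{M}(P)$. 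The first step is to prove that $\mathcal{T}(P)=\{e\}$ for every non-cyclic abelian $p$-group $P$: each maximal cyclic subgroup $M$ has a unique subgroup of order $p$, its socle $\Omega_1(M)$, and since $P$ has rank $\ge 2$ there are (by Remark \ref{remark maximal}) two maximal cyclic subgroups whose socles are distinct lines, which forces their intersection, a $p$-group with trivial socle, to be trivial. Hence $\t=\mathbb{Z}_n\times\{e\}$, and for distinct $M_i,M_j\in\mathcal{M}(P)$ one gets $|((\mathbb{Z}_n\times M_i)\cap(\mathbb{Z}_n\times M_j))\setminus\t|=n(|M_i\cap M_j|-1)$, with the analogous identity for triple intersections. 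Thus Theorem \ref{arbitrary M EPG} collapses to purely numerical conditions on $\mathcal{M}(P)$.

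Next I would split on $n$. If $n\ge 2$, condition (i) forces $|M_i\cap M_j|=1$ for all distinct $i,j$; if $n=1$ (so $G=P$) condition (i) reads $|M_i\cap M_j|\le 2$, which for odd $p$ again forces trivial pairwise intersections but for $p=2$ permits intersections of order $2$. The core is then a dichotomy for $P\cong\mathbb{Z}_{p^{a_1}}\times\cdots\times\mathbb{Z}_{p^{a_s}}$ with $a_1\ge\cdots\ge a_s$ and $s\ge 2$. I would show that if $a_1\ge 2$ then two distinct maximal cyclic subgroups already meet non-trivially: taking $M_1=\langle e_1\rangle$ and $M_2=\langle e_1+c\rangle$ for a suitable element $c$ of a second factor arranges $p^{a_1-1}e_1\in M_1\cap M_2$ while $M_1\neq M_2$. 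Consequently \emph{pairwise-trivial intersection is equivalent to $P$ being elementary abelian}. This disposes of the cases $n\ge 2$ and ($n=1$, $p$ odd), both yielding $P\cong\mathbb{Z}_p^s$, which is case (iii).

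For the last case, $n=1$, $p=2$, $a_1\ge 2$, I would pin down the exponent and the rank separately. With $M_1=\langle e_1\rangle$ and $M_2=\langle e_1+f\rangle$ for an involution $f\notin\langle e_1\rangle$, the intersection contains $\langle 2e_1\rangle$ of order $2^{a_1-1}$; if $a_1\ge 3$ this exceeds $2$ and breaks condition (i), so $a_1=2$ and $P\cong\mathbb{Z}_4^{\,r}\times\mathbb{Z}_2^{\,t}$. For such $P$ the involution $2e_1$ is a square, hence lies only in maximal cyclic subgroups of order $4$, and counting the generators $(a,\mathbf{w})$ with $a$ odd and $2\mathbf{w}=0$ shows $2e_1$ belongs to exactly $2^{s-1}$ of them; condition (ii) (no non-identity element in three maximal cyclic subgroups) then forces $2^{s-1}\le 2$, i.e. $s=2$. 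This leaves precisely $\mathbb{Z}_2\times\mathbb{Z}_4$ and $\mathbb{Z}_4\times\mathbb{Z}_4$, namely cases (i) and (ii).

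For sufficiency, case (iii) is immediate from Corollary \ref{any order corollary}, since all maximal cyclic subgroups of $\mathbb{Z}_n\times\mathbb{Z}_p^s$ meet in $\mathbb{Z}_n\times\{e\}=\t$. For $\mathbb{Z}_2\times\mathbb{Z}_4$ and $\mathbb{Z}_4\times\mathbb{Z}_4$ I would list their four (respectively six) maximal cyclic subgroups explicitly and verify directly that every pairwise intersection has order at most $2$ and that no non-identity element lies in three of them, so that Theorem \ref{arbitrary M EPG} applies. I expect the main obstacle to be the $p=2$, $a_1=2$ analysis: getting the count of maximal cyclic subgroups through the fixed involution $2e_1$ exactly right, and confirming those subgroups are genuinely maximal cyclic rather than merely cyclic, is precisely what distinguishes the two surviving groups from $\mathbb{Z}_4\times\mathbb{Z}_2\times\mathbb{Z}_2$ and all larger abelian $2$-groups.
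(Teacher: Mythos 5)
Your proposal is correct, and it reaches the classification by a genuinely different route from the paper. After the common first step (Lemma \ref{one non cyclic} plus Lemma \ref{on maximal connectivity} to write $G=\mathbb{Z}_n\times P$ with $P$ the unique non-cyclic Sylow subgroup), the paper's necessity argument outsources the hard case analysis: it notes $\mathcal{P}_E(P)=\mathcal{P}(P)$ via \cite[Theorem 31]{a.Cameron2016}, invokes \cite[Proposition 3.5]{a.bera2022} to conclude that any $P$ other than $\mathbb{Z}_2\times\mathbb{Z}_{2^2}$, $\mathbb{Z}_{2^2}\times\mathbb{Z}_{2^2}$, $\mathbb{Z}_p\times\cdots\times\mathbb{Z}_p$ forces an induced $\Gamma_1$ inside $\dd$, and then kills the possibility $n>1$ for the two $2$-groups by exhibiting explicit pairs of maximal cyclic subgroups violating Theorem \ref{arbitrary M EPG}; its sufficiency for cases (i)--(ii) likewise cites \cite[Theorem 3.4]{a.bera2022}, while for (iii) it writes $\dd=\frac{p^k-1}{p-1}K_{(p-1)n}=L\bigl(\frac{p^k-1}{p-1}K_{1,(p-1)n}\bigr)$ explicitly. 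You instead stay entirely inside the paper's own Theorem \ref{arbitrary M EPG}: the socle argument giving $\mathcal{T}(P)=\{e\}$, hence $\t=\mathbb{Z}_n\times\{e\}$, converts the two conditions into the numerical statements $n\bigl(|M_i\cap M_j|-1\bigr)\leq 1$ and $|M_i\cap M_j\cap M_k|=1$ on $\mathcal{M}(P)$, after which your dichotomy (exponent $\geq p^2$ forces a non-trivial pairwise intersection) and the count of $2^{s-1}$ maximal cyclic subgroups through the square involution $2e_1$ recover the three families without any external input; your checks of the small cases ($4$ maximal cyclic subgroups for $\mathbb{Z}_2\times\mathbb{Z}_{2^2}$, $6$ for $\mathbb{Z}_{2^2}\times\mathbb{Z}_{2^2}$, pairwise intersections of order at most $2$, no involution in three of them) are accurate. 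What each approach buys: the paper's proof is shorter on the page because the $p$-group classification is borrowed from \cite{a.bera2022}, whereas yours is self-contained, treats $n\geq 2$ and $n=1$ uniformly through the single factor $n(|M_i\cap M_j|-1)$, and makes transparent exactly where $\mathbb{Z}_{2^2}\times\mathbb{Z}_2\times\mathbb{Z}_2$ and higher-rank or higher-exponent $2$-groups fail (the $2^{s-1}\leq 2$ count, respectively the $\langle 2e_1\rangle$ bound); the only thing you give up is the explicit description of a graph $\Gamma$ with $\dd=L(\Gamma)$ in case (iii), which the statement does not require.
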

\begin{proof}
    Let $G$  be a finite non-cyclic abelian group. Then $G=P_1\times P_2 \times \cdots \times P_r$, where $P_i$'s are Sylow $p_i$-subgroups of $G$. Suppose that $\dd$ is a line graph. Then by Lemma \ref{one non cyclic}, $G$ has a unique non-cyclic Sylow subgroup. Consequently, $G\cong \mathbb{Z}_n\times P$, where $P$ is a non-cyclic abelian Sylow $p$-subgroup of $G$ and $\mathrm{gcd}(p,n)=1$. Then by Theorem \ref{Dominating enhanced power graph}, $V(\dd )= G\setminus \{(a,e): a\in \mathbb{Z}_n\}$, where $e$ is the identity element of $P$.  Observe that $\mathcal{P}_E(P)$ is an induced subgraph of $\c$. Also, $\mathcal{P}_E(P)= \mathcal{P}(P)$ (cf. {\rm \cite[Theorem 31]{a.Cameron2016}}). By  {\rm \cite[Proposition 3.5]{a.bera2022}}, if $P\ncong \mathbb{Z}_2\times \mathbb{Z}_{2^2}, \;  \mathbb{Z} _{2^2}\times \mathbb{Z}_{2^2},\;  \mathbb{Z}_p\times \mathbb{Z}_p \times \cdots \times\mathbb{Z}_p$, then $\mathcal{P}^{**}(P)$ has an induced subgraph isomorphic to $\Gamma_1$ and so $\dd$ contains an induced subgraph isomorphic to $\Gamma_1$; a contradiction. Thus, $P$ is isomorphic to one of the three groups: $\mathbb{Z}_2\times \mathbb{Z}_{2^2},\;  \mathbb{Z} _{2^2}\times \mathbb{Z}_{2^2}, \; \mathbb{Z}_p\times \mathbb{Z}_p \times \cdots \times\mathbb{Z}_p$. If $n>1$ and $G\cong \mathbb{Z}_n\times \mathbb{Z}_2\times \mathbb{Z}_{2^2}$, then $G$ has two maximal cyclic subgroups $M_1=\langle (\overline{1},\overline{0},\overline{1})\rangle$ and $M_2=\langle (\overline{1},\overline{1},\overline{1})\rangle$ of order $4n$ such that $(\overline{1},\overline{0},\overline{2}), (\overline{2},\overline{0},\overline{2})\in (M_1\cap M_2)\setminus \t $, where $\t =\{(a,\overline{0},\overline{0}): a\in \mathbb{Z}_n\}$; a contradiction (see Theorem \ref{arbitrary M EPG}). Thus, $n=1$.

    If $n>1$ and $G\cong \mathbb{Z}_n\times \mathbb{Z}_{2^2}\times \mathbb{Z}_{2^2}$, then $G$ has two maximal cyclic subgroups $M_3=\langle (\overline{1},\overline{1},\overline{0})\rangle$ and $M_4=\langle (\overline{1},\overline{1},\overline{2})\rangle$ of order $4n$ such that $(\overline{1},\overline{2},\overline{0}), (\overline{2},\overline{2},\overline{0})\in (M_3\cap M_4)\setminus \t $, where $\t =\{(a,\overline{0},\overline{0}): a\in \mathbb{Z}_n\}$; again a contradiction.

    Conversely, if either $G\cong \mathbb{Z}_2\times \mathbb{Z}_{2^2}$ or $G \cong \mathbb{Z}_{2^2}\times \mathbb{Z}_{2^2}$, then $\dd= \ppo$. By {\rm \cite[Theorem 3.4]{a.bera2022}}, $\dd = L(\Gamma)$ for some graph $\Gamma$.  Now suppose $G\cong \mathbb{Z}_n\times \mathbb{Z}_p\times \mathbb{Z}_p \times \cdots \times\mathbb{Z}_p \; (k\text{-times})$, where $k\geq 2$. Then $\dd = \frac{p^k-1}{p-1}K_{(p-1)n} = L(\frac{p^k-1}{p-1}K_{1,(p-1)n})$. This completes the proof.
\end{proof} 
 \begin{proposition}{\label{Proper EPG non-abelian nilpotent}}
     Let $G$ be a finite non-abelian nilpotent group (except non-abelian $2$-group). Then $\dd$ is a line graph of some graph $\Gamma$ if and only if $G$ is isomorphic to one of the following groups:
     \begin{itemize}
        \item[(i)] $\mathbb{Z}_n\times Q_{2^k}$ such that $\mathrm{gcd}(2,n)=1$.
         \item[(ii)] $\mathbb{Z}_n\times P$ such that $P$ is a non-abelian $p$-group with $\mathrm{gcd}(n,p)=1$ and the intersection of any two maximal cyclic subgroups of $P$ is trivial.
     \end{itemize}
 \end{proposition}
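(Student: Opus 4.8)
The plan is to reduce the problem to the unique non-cyclic Sylow factor of $G$ and then apply the intersection criterion of Theorem \ref{arbitrary M EPG}. First I would treat the forward implication: assume $\dd=L(\Gamma)$. Writing $G=P_1\times\cdots\times P_r$ as the internal direct product of its Sylow subgroups (Theorem \ref{nilpotent}), Lemma \ref{one non cyclic} guarantees that exactly one Sylow factor $P$ (a $p$-group) is non-cyclic, while the product of the remaining (cyclic) factors is a cyclic group $\mathbb{Z}_n$ with $\gcd(n,p)=1$; thus $G\cong \mathbb{Z}_n\times P$, and $P$ is non-abelian because the $\mathbb{Z}_n$-part is abelian while $G$ is not. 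Since $\mathbb{Z}_n$ is its own unique maximal cyclic subgroup, Lemma \ref{on maximal connectivity} shows that the maximal cyclic subgroups of $G$ are precisely the subgroups $\mathbb{Z}_n\times M_i$ with $M_i\in\mathcal{M}(P)$. This gives the book-keeping identities $\t=\mathbb{Z}_n\times\mathcal{T}(P)$ and $(\mathbb{Z}_n\times M_i)\cap(\mathbb{Z}_n\times M_j)=\mathbb{Z}_n\times(M_i\cap M_j)$, which I will use throughout.

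Feeding these identities into Theorem \ref{arbitrary M EPG}, a direct count yields $|((\mathbb{Z}_n\times M_i)\cap(\mathbb{Z}_n\times M_j))\setminus\t|=n\,|(M_i\cap M_j)\setminus\mathcal{T}(P)|$, and similarly for triple intersections. Hence $\dd$ is a line graph if and only if $n\,|(M_i\cap M_j)\setminus\mathcal{T}(P)|\le 1$ and $M_i\cap M_j\cap M_k\subseteq\mathcal{T}(P)$ for all $M_i,M_j,M_k\in\mathcal{M}(P)$. The remaining task is to pin down $\mathcal{T}(P)$ and read off the consequences.

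The crux, and the step I expect to be hardest, is determining $\mathcal{T}(P)=\mathrm{Dom}(\mathcal{P}_E(P))$ (the analogue of Remark \ref{remark}(i) applied to $P$) via Theorem \ref{Dominating enhanced power graph}. If $P\cong Q_{2^k}$, then $\mathcal{T}(P)$ is the central subgroup of order $2$, and one checks that every pairwise intersection $M_i\cap M_j$ of distinct maximal cyclic subgroups equals exactly this central subgroup; both conditions above then hold trivially, and with $\gcd(2,n)=1$ we land in case (i). If $P\not\cong Q_{2^k}$, then $P$ has no cyclic or generalized quaternion Sylow subgroup, so Theorem \ref{Dominating enhanced power graph} gives $\mathcal{T}(P)=\{e\}$, and the first condition becomes $n(|M_i\cap M_j|-1)\le 1$. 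When $n>1$ this forces $|M_i\cap M_j|=1$; when $n=1$ we have $G=P$, which by hypothesis is \emph{not} a non-abelian $2$-group, so $p$ is odd and $|M_i\cap M_j|$, being a power of $p$ bounded by $2$, equals $1$. Either way the pairwise intersections in $P$ are trivial, giving case (ii). It is precisely here that the exclusion of non-abelian $2$-groups is needed: for $n=1$ and $p=2$ the bound only gives $|M_i\cap M_j|\le 2$, which does not force triviality, so such groups can be line graphs while escaping the stated list.

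For the converse I would avoid redoing the case analysis and instead verify the single hypothesis of Corollary \ref{any order corollary}. Using the product formula, the intersection of any two maximal cyclic subgroups of $G$ equals $\mathbb{Z}_n\times(M_i\cap M_j)$; in case (i) this is $\mathbb{Z}_n\times Z(Q_{2^k})=\t$, and in case (ii) it is $\mathbb{Z}_n\times\{e\}=\t$. Thus in both families the intersection of any two maximal cyclic subgroups equals $\t$, and Corollary \ref{any order corollary} immediately yields $\dd=L(\Gamma)$ for some graph $\Gamma$, completing the proof.
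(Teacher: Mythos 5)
Your proof is correct and follows essentially the same route as the paper: Lemma \ref{one non cyclic} reduces to $G\cong\mathbb{Z}_n\times P$, Lemma \ref{on maximal connectivity} together with Theorem \ref{arbitrary M EPG} (and $\t$ determined via Theorem \ref{Dominating enhanced power graph} and Remark \ref{remark}(i)) gives the forward direction, and Corollary \ref{any order corollary} gives the converse. The only cosmetic differences are that you package the paper's explicit contradiction via the two elements $(\overline{1},x),(\overline{2},x)$ as a uniform counting identity $n\,|(M_i\cap M_j)\setminus\mathcal{T}(P)|\leq 1$, and you settle the $n=1$ (odd $p$) case by a direct prime-power argument where the paper instead cites Corollary \ref{odd order condition}.
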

 \begin{proof}
       Let $G=P_1\times P_2\times \cdots \times P_r$ be a finite non-abelian nilpotent group which is not a $2$-group. Suppose that $\dd$ is a line graph. By Lemma \ref{one non cyclic}, exactly one $P_i$ is non-cyclic. Consequently, $G\cong \mathbb{Z}_n\times P$ such that $P$ is a non-abelian $p$-group and $\mathrm{gcd}(n,p)=1$. If $P=Q_{2^k}$, then there is nothing to prove. We may now suppose that $P$ is not a generalized group and $n>1$. On contrary, assume that $P$ has two maximal cyclic subgroups $M'$ and $M''$ such that $x \; (\neq e)\in M'\cap M''$. Consequently, $G$ has two maximal cyclic subgroup $M_1=\mathbb{Z}_n \times M'$ and $M_2=\mathbb{Z}_n \times M''$ (see Lemma \ref{on maximal connectivity}) such that $(\overline{1},x), (\overline{2},x)\in M'\cap M''$. Since $\t = \{(a,e): a\in \mathbb{Z}_n\}$ (see Theorem \ref{Dominating enhanced power graph} and Remark \ref{remark}(i)), we get a contradiction of Theorem \ref{arbitrary M EPG}. Thus, $G$ is isomorphic to the group described in (ii). Now suppose $n=1$ then $G=\mathbb{Z}_1\times P$ is a $p$-group, where $p$ is an odd prime. Then by Corollary \ref{odd order condition}, the intersection of any two maximal cyclic subgroups of $G$ is equal to $\t$. By Theorem \ref{Dominating enhanced power graph}, $\t =\{e\}$. Thus, $G$ is isomorphic to the group described in (ii).

       Conversely, suppose that  $G\cong \mathbb{Z}_n\times Q_{2^k}$ such that $\mathrm{gcd}(2,n)=1$. Also, the intersection of any two maximal cyclic subgroups of $Q_{2^k}$ is $Z(Q_{2^k})$. Consequently, the intersection of any two maximal cyclic subgroups of $G$ is the set $\{(a,b): a\in \mathbb{Z}_n, b\in Z(Q_{2^k})\}$ (see Lemma \ref{on maximal connectivity}). Indeed, $\t = \{(a,b): a\in \mathbb{Z}_n, b\in Z(Q_{2^k})\}$. By Corollary \ref{any order corollary}, $\dd$ is a line graph of some graph $\Gamma$. If $G\cong \mathbb{Z}_n\times P$, where $P$ is a non-abelian $p$-group such that $\mathrm{gcd}(n,p)=1$ and the intersection of any two maximal cyclic subgroups of $P$ is trivial. Then by Lemma \ref{on maximal connectivity}, the intersection of any two maximal cyclic subgroups of $G$ is the set $\{(a,e): a\in \mathbb{Z}_n\}$. Moreover, $\t =\{(a,e): a\in \mathbb{Z}_n\}$. By Corollary \ref{any order corollary}, $\dd$ is a line graph of some graph $\Gamma$.
 \end{proof}
 On combining Proposition \ref{proper EPG abelian} and Proposition \ref{Proper EPG non-abelian nilpotent}, we obtain Theorem \ref{nilpotent proper EPG}.
 \subsection*{Proof of Theorems \ref{line complement power graph}-\ref{line complement proper EPG}}
The following propositions play an important role to prove the Theorems \ref{line complement power graph}, \ref{line complement proper power graph}, \ref{line complement EPG}, \ref{line complement proper EPG}.
 \begin{proposition}{\label{cyclic power complement}}
     Let $G$ be a finite cyclic group. Then $\po$ is the complement of a line graph of some graph $\Gamma$ if and only if either $G\cong \mathbb{Z}_6$ or $G\cong \mathbb{Z}_{p^{\alpha}}$ for some prime $p$.
 \end{proposition}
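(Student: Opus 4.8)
The plan is to apply the forbidden-subgraph criterion of Lemma~\ref{complement induced lemma}: $\po$ is the complement of a line graph exactly when none of the nine graphs $\overline{\Gamma_i}$ occurs as an induced subgraph, equivalently when $\overline{\po}$ is a line graph. Throughout I would exploit the fact that in a cyclic group two elements $a,b$ are adjacent in $\po$ iff $o(a)\mid o(b)$ or $o(b)\mid o(a)$, since $\langle a\rangle$ is the unique subgroup of order $o(a)$, so $a\in\langle b\rangle$ iff $o(a)\mid o(b)$. In particular the $\phi(d)$ elements of a fixed order $d$ form a clique, and elements of incomparable orders are non-adjacent. The only forbidden graph I will actually need is $\overline{K_{1,3}}=K_3\cup K_1$ (a triangle together with a vertex non-adjacent to all three of its vertices), which is one of the nine graphs of Lemma~\ref{complement induced lemma}.

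For sufficiency I would handle the two families directly. If $G\cong\mathbb{Z}_{p^\alpha}$, then $\po\cong K_{p^\alpha}$ by Theorem~\ref{complete power graph}; since the edgeless graph on $p^\alpha$ vertices is $L(p^\alpha K_2)$, we get $\po=\overline{L(p^\alpha K_2)}$, a complement of a line graph. If $G\cong\mathbb{Z}_6$, a short computation of element orders gives $\po=K_3\vee(K_2\cup K_1)$, hence $\overline{\po}=3K_1\cup P_3$; as $P_3=L(P_4)$ and $3K_1=L(3K_2)$, we obtain $\overline{\po}=L(P_4\cup 3K_2)$, so $\po$ is again the complement of a line graph.

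For necessity I would assume $G\cong\mathbb{Z}_n$ with $n$ neither a prime power nor $6$ and exhibit an induced $\overline{K_{1,3}}=K_3\cup K_1$ in $\po$. Since $n$ has at least two distinct prime divisors, I split into cases by the primes dividing $n$. First, if some prime $r\ge 5$ divides $n$, I choose another prime divisor $p$: the $\phi(r)=r-1\ge 4$ elements of order $r$ furnish a triangle, and any element of order $p$ is non-adjacent to all of them (as $p,r$ are incomparable), giving $K_3\cup K_1$. Otherwise the only prime divisors are $2$ and $3$, so $n=2^a3^b$ with $\max(a,b)\ge 2$. If $9\mid n$, I take three of the $\phi(9)=6$ elements of order $9$ as the triangle and an element of order $2$ as the isolated vertex. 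If instead $b=1$ and $a\ge 2$, so $4\mid n$, I form the triangle from one element of order $2$ and two of order $4$ (pairwise comparable orders) and use an element of order $3$, which is incomparable to both $2$ and $4$. In every case $\po$ contains an induced $\overline{K_{1,3}}$, so by Lemma~\ref{complement induced lemma} $\po$ is not the complement of a line graph.

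The delicate point is that $\mathbb{Z}_6$ is the unique non-prime-power exception: there the only incomparable pair of divisors is $(2,3)$, and $\phi(2)=1$, $\phi(3)=2$ leave too few elements to populate three pairwise-comparable leaves, so no $K_3\cup K_1$ arises. The main work is therefore organizing the case analysis so that it covers every $n=2^a3^b$ with $\max(a,b)\ge 2$ — in particular the borderline $n=12$, where the triangle must be built by mixing orders $2$ and $4$ rather than by taking three elements of one order. Once the forbidden $\overline{K_{1,3}}$ is located in each remaining case, the criterion closes the argument and no other $\overline{\Gamma_i}$ needs to be inspected.
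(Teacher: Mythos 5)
Your proposal is correct and follows essentially the same route as the paper: both directions hinge on locating the single forbidden subgraph $\overline{\Gamma_1}=\overline{K_{1,3}}=K_3\cup K_1$ (elements of one prime-power order giving the triangle, an element of coprime prime order giving the isolated vertex), and sufficiency is handled identically via $K_{p^\alpha}=\overline{L(p^\alpha K_2)}$ and $\mathcal{P}(\mathbb{Z}_6)=\overline{L(3K_2\cup P_4)}$. Your explicit computation of $\overline{\mathcal{P}(\mathbb{Z}_6)}=3K_1\cup P_3$ replaces the paper's appeal to a figure, and your case split on $9\mid n$ versus $4\mid n$ is a minor reorganization of the paper's split on $\alpha_1\geq 2$ versus $\alpha_2\geq 2$, but these are cosmetic differences.
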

 \begin{proof}
     Let $G$ be a  cyclic group of order $n$. First, suppose that $\po$ is the complement of line graph of some graph $\Gamma$. On contrary assume that neither $n=6$ nor $n$ is a prime power. Consider $n=p_1^{\alpha _1}p_2^{\alpha _2}\cdots p_r^{\alpha _r} \; (r\geq 2)$ is the prime factorization of $n$ such that $p_1<p_2<\cdots <p_r$. Now, if $p_i \geq 5$ for any $i\in [r]$, then $G$ has at least $4$  elements of order $p_i$. Let $x,y ,z,w \in G$ such that $o(x)=o(y)=o(z)=p_i$ and $o(w)=p_j$ for some $j\in [r]\setminus \{i\}$. Then by Remark \ref{remark}(ii), the subgraph induced by the set $\{x,y,z,w\}$ is isomorphic to $\overline{\Gamma_1}$ (see Lemma \ref{induced lemma}); a contradiction. Thus, $p_i\leq 3$ for all $i\in [r]$. Consequently, $r=2$ and $p_1=2, p_2=3$. Since $n\neq 6$, we have either $\alpha _1\geq 2$ or $\alpha _2 \geq 2$. If $\alpha _1\geq 2$, then consider $x_1, x_2 , x_3, x_4 \in G$ such that $o(x_1)=2^{\beta _1}, o(x_2)=2^{\beta_2 }, o(x_3)=2^{\beta_3} $ and $o(x_4)=3$. The subgraph of $\po$ induced by the set $\{x_1, x_2, x_3, x_4\}$ is isomorphic to $\overline{\Gamma_1}$; a contradiction. Similarly, if $\alpha_2 \geq 2$ then again we get a contradiction. Thus, either $G\cong \mathbb{Z}_6$ or $G\cong \mathbb{Z}_{p^{\alpha}}$ for some prime $p$.

  Conversely, if $G\cong \mathbb{Z}_{p^{\alpha}}$, then $\po = K_{p^{\alpha}}$ (cf. Theorem \ref{complete power graph}). Observe that $K_n=\overline{L(n K_2)}$ and so $\po = \overline{L(p^{\alpha} K_2)}$. If $G\cong \mathbb{Z}_6$ then by Figure \ref{fig z6}, we have $\po = \overline{L(3K_2\cup P_4)}$. This completes our proof.
   \begin{figure}[ht]
    \centering
    \includegraphics[scale=.6]{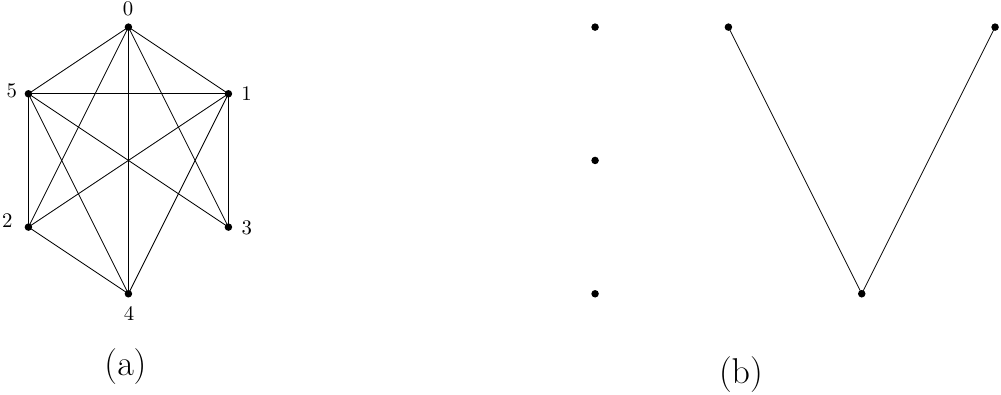}
    \caption{(a) $\mathcal{P}(\mathbb{Z}_6) $
 (b) $L(3K_2\cup P_4)$.}
    \label{fig z6}
\end{figure}
 \end{proof}
 
 \begin{proposition}{\label{complement all non-cyclic}}
     Let $G$ be a finite non-cyclic group and $\del \in \{\c , \po , \dd , \ppo \}$. Then $\del$ is the complement of a line graph of some graph $\Gamma$ if and only if $G$ is isomorphic to $Q_8$ or $\mathbb{Z}_2\times \cdots \times \mathbb{Z}_2$.
 \end{proposition}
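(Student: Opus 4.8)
The plan is to prove the two implications separately: the forward (``only if'') direction by exhibiting, for every non-cyclic $G$ other than $Q_8$ and an elementary abelian $2$-group, one of the forbidden induced subgraphs $\overline{\Gamma_i}$ of Lemma~\ref{complement induced lemma}; and the converse by directly computing the four graphs. Throughout, the most useful configuration is $\overline{\Gamma_1}=\overline{K_{1,3}}=K_3\cup K_1$ (a triangle together with a vertex adjacent to none of it), exactly as in Proposition~\ref{cyclic power complement}. Since $\t=\mathrm{Dom}(\c)$ consists of vertices adjacent to all others, any triangle meeting $\t$ cannot be completed to an induced $K_3\cup K_1$; such a configuration must therefore live entirely among the non-dominating vertices.

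For the forward direction I would first establish the key reduction. \textbf{Claim A:} if $\del$ is the complement of a line graph, then $|M\setminus\t|\le 2$ for every $M\in\m$. Indeed, if some $M=\langle g\rangle$ had three distinct $a,b,c\in M\setminus\t$, chosen of prime-power order inside $M$ so as to be pairwise adjacent in $\po$ (hence in $\c$), they would form a triangle in the non-dominating part; since $\t=\bigcap_{M'}M'$ one may pick $M'\neq M$ with $a,b,c\notin M'$, and its generator $m'$ satisfies $N[m']=M'$ (Remark~\ref{remark}(iii)), so $m'\nsim a,b,c$. As all four are non-dominating, $\{a,b,c,m'\}$ induces $\overline{\Gamma_1}$ in each of $\c,\po,\dd,\ppo$, contradicting Lemma~\ref{complement induced lemma}. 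The force of Claim A is that $\t\subseteq M$ is a cyclic subgroup of the cyclic group $M$, so $|\t|\mid|M|$ and $|M|-|\t|\le2$; this is impossible for $|\t|\ge3$, hence $|\t|\in\{1,2\}$, and moreover $|M|\in\{2,3\}$ when $|\t|=1$, while $|M|=4$ when $|\t|=2$ (the value $|M|=2=|\t|$ being excluded, else $M=\t$ would be the only maximal cyclic subgroup, contradicting Lemma~\ref{M(G) >2}).

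I would then finish by cases. If $|\t|=2$, every maximal cyclic subgroup is a $\mathbb{Z}_4$ whose unique involution generates $\t$, so every element has order in $\{1,2,4\}$; thus $G$ is a $2$-group with a unique involution, hence cyclic or generalized quaternion, and non-cyclicity with exponent $4$ give $G\cong Q_8$. If $|\t|=1$ with every $|M|=2$, then every non-identity element is an involution and $G\cong\mathbb{Z}_2\times\cdots\times\mathbb{Z}_2$. The remaining and most delicate case is $|\t|=1$ with some $M_0=\langle a\rangle$ of order $3$; here I must produce a forbidden subgraph \emph{not} using the identity, so that the argument also covers $\dd,\ppo$ (from which $e$ is deleted). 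The plan is to show that an order-$3$ maximal cyclic subgroup forces $|\m|\ge4$ (a covering argument: by Scorza's theorem $G=\bigcup_{M\in\m}M$ cannot be a union of three proper subgroups unless it has a $\mathbb{Z}_2\times\mathbb{Z}_2$ quotient, which an order-$6$ group with several order-$3$ subgroups does not admit), and then to check that $a,a^2$ together with generators $m_1,m_2,m_3$ (and, in the non-proper graphs, $e$) induce a six-vertex $\overline{\Gamma_i}$: for $\c$ this is $K_1\vee(K_2\cup 3K_1)$, whose complement $K_1\cup(K_5-e)$ is not a line graph, while for $\dd$ the relevant configuration sits inside a disjoint union of $K_2$'s and is again a non-line-graph complement. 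This is the step I expect to be the main obstacle, as it genuinely requires a second forbidden graph beyond $\overline{\Gamma_1}$ and a separate verification in the proper case where the dominating vertices are absent.

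For the converse I would simply compute. If $G\cong\mathbb{Z}_2\times\cdots\times\mathbb{Z}_2$ of order $2^k$, all maximal cyclic subgroups have order $2$, so $\c=\po=K_{1,2^k-1}$ and $\dd=\ppo=\overline{K_{2^k-1}}$; their complements $K_{2^k-1}\cup K_1=L(K_{1,2^k-1}\cup K_2)$ and $K_{2^k-1}=L(K_{1,2^k-1})$ are line graphs. If $G\cong Q_8$, the three maximal cyclic subgroups are copies of $\mathbb{Z}_4$ sharing the centre $\t=\{e,a^2\}$, so $\c=\po=K_2\vee 3K_2$ and $\dd=\ppo=3K_2$; using that the complement of a join is the disjoint union of complements, their complements $\overline{K_2}\cup K_{2,2,2}=L(2K_2\cup K_4)$ and the octahedron $K_{2,2,2}=L(K_4)$ are line graphs. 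This completes the plan.
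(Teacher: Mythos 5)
Your overall architecture (reduce everything to Claim A, that $|M\setminus\t|\le 2$ for every $M\in\m$, then classify) is viable, but your proof of Claim A has a genuine hole. You insist on choosing three elements $a,b,c\in M\setminus\t$ \emph{of prime-power order} that are pairwise adjacent in $\po$. Pairwise adjacency in $\po$ of prime-power elements forces all three to be powers of one prime, and such a triple need not exist: if $M=\langle g\rangle\cong\mathbb{Z}_6$ and $\t=\{e\}$, the prime-power elements of $M\setminus\t$ are $g^2,g^4$ (order $3$) and $g^3$ (order $2$), and $g^3$ is adjacent to neither $g^2$ nor $g^4$ in $\po$. This is not an ignorable corner case: for $G=D_{12}$ one has $\t=\{e\}$ and $M=\langle r\rangle\cong\mathbb{Z}_6$ with $|M\setminus\t|=5$, so Claim A must exclude $D_{12}$, yet your argument produces no forbidden subgraph for it, and $D_{12}$ then also escapes your subsequent case analysis, which presupposes $|\t|\le 2$ and $|M|\in\{2,3,4\}$ throughout. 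A second, smaller defect: the sentence ``since $\t=\bigcap M'$ one may pick $M'\neq M$ with $a,b,c\notin M'$'' is, as written, a non sequitur --- the definition of $\t$ only gives, for each \emph{single} element outside $\t$, some maximal cyclic subgroup missing it, not one missing all three at once. Both problems disappear with a better triangle: since $|M\setminus\t|\ge 3$ forces $|M|\ge 4$, take $a=g$, $b=g^{-1}$ and $c$ any third element of $M\setminus\t$. These are pairwise adjacent in $\po$; the generators $a,b$ lie in no other maximal cyclic subgroup (Remark \ref{remark maximal}), so any $M'$ witnessing $c\notin\t$ avoids all three, and $\{a,b,c,m'\}$ induces $K_3\cup K_1=\overline{\Gamma_1}$ in all four graphs, as you intended.

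With Claim A repaired, the rest of your plan is sound: the arithmetic giving $|\t|\le 2$; the unique-involution argument yielding $Q_8$; the exponent-$2$ case; and the order-$3$ case via Scorza (if $|\m|=3$, Scorza forces all three members to have index $2$ and $4$ to divide $|G|$, incompatible with a member of order $3$). In that last case you do not need the identity or any six-vertex verification: $\{a,a^2,m_1,m_2,m_3\}$ already induces $K_2\cup 3K_1=\overline{\Gamma_3}$, which is exactly the forbidden graph the paper itself uses, so only $\overline{\Gamma_1}$ and $\overline{\Gamma_3}$ ever occur. Your converse computations coincide with the paper's. For comparison, the paper's forward direction is organized quite differently: it splits on $|\m|\ge 4$ versus $|\m|=3$, uses $\overline{\Gamma_3}$ and $\overline{\Gamma_1}$ to bound every $|M_i|\le 4$, hence $|G|\le 10$, and then cites an external table of groups with exactly three maximal cyclic subgroups to land on $Q_8$ and $\mathbb{Z}_2\times\mathbb{Z}_2$. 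Your (repaired) route replaces that table by Scorza's theorem and the classification of $2$-groups with a unique involution; this is more self-contained and more structural, at the cost of invoking those two standard group-theoretic facts, which the paper never needs.
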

 \begin{proof}
     Let $G$ be a finite non-cyclic group such that $\del$ is the complement of a line graph of some graph $\Gamma$. Since $G$ is non-cyclic, by Lemma \ref{M(G) >2}, we have $|\m|\geq 3$. We now discuss the following cases.\\
     \noindent\textbf{Case-1:} $|\m|\geq 4$. In this case, we show that $G\cong \mathbb{Z}_2\times \cdots \times \mathbb{Z}_2$ ($k$-copies), where $k\geq 3$. On contrary, if $G$ is not isomorphic to $\mathbb{Z}_2\times \cdots \times \mathbb{Z}_2$  then $G$ has a maximal cyclic subgroup $M$ such that $|M|\geq 3$. Consequently, $M$ has at least $2$ generators. Let $x, y \in M$ such that $M=\langle x \rangle = \langle y \rangle$ and let $z, t, w$ be generators of other three maximal cyclic subgroups of $G$. Then by Remark \ref{remark}, the subgraph induced by the set $\{x,y,z,t,w\}$ is isomorphic to $\overline{\Gamma_3}$ (see Figure \ref{fig complement_line_graph}); which is a contradiction. \\
     \noindent\textbf{Case-2:} $|\m|=3$. Consider $M_1, M_2, M_3\in \m$ such that $\phi(|M_1|)\geq \phi(|M_2|)\geq \phi(|M_3|) $. Now we have the following subcases:

     \textbf{Subcase-2.1:} $\phi(|M_1|)\geq 3$. Let $M_1=\langle x \rangle =\langle y \rangle =\langle z \rangle$  and let $M_2= \langle t \rangle $. Then the subgraph of $\del$ induced by the set $\{x,y,z,t\}$ is isomorphic to $\overline{\Gamma_1}$; a contradiction. Therefore, this subcase is not possible.

     \textbf{Subcase-2.2:} $\phi(|M_1|)\leq 2$. Then $|M_1|\in \{2,3,4,6\}$. Let $|M_1|=6$ and let $M_1= \langle x \rangle $. Then $x^2$ and $x^3$ are elements of order $3$ and $2$, respectively. Let $M_2=\langle y \rangle $. Then $M_2$ cannot contain both the elements $x^2$ and $x^3$. Otherwise, $M_1\subseteq M_2$ which is not possible. Without loss of generality, assume that $x^2 \notin M_2$. Then $x^2\nsim y$ in $\c$ and so $x^2\nsim y$ in $\del$. Consequently, the subgraph of $\del$ induced by the set $\{x, x^2, x^5, y\}$ is isomorphic to $\overline{\Gamma_1}$; again a contradiction (see Remark \ref{remark}). Thus, $|M_1|\leq 4$. Similarly, we get $|M_2|,|M_3|\leq 4$. It follows that $o(G)\leq |M_1\cup M_2\cup M_3|\leq 10$. By Table $1$ of \cite{a.kumar2022complement}, there exist only two groups $Q_8$ and $\mathbb{Z}_2\times \mathbb{Z}_2$ (whose order is at most $10$) with exactly three maximal cyclic subgroups. Thus, $G\cong Q_8$ or $G\cong \mathbb{Z}_2\times \mathbb{Z}_2$.

    Conversely, let $G\cong Q_8$. For $\del \in \{\c, \po\}$, we obtain $\del = K_2 \vee 3K_2 = \overline{L(2K_2\cup K_4)}$ (see Figure \ref{Q8}). If $\del \in \{\dd, \ppo\}$, then we have $\del = 3K_2 = \overline{L(K_4)}$. Now assume that $G \cong \mathbb{Z}_2\times \cdots \times \mathbb{Z}_2$ ($k$-times), where $k\geq 2$. For $\del \in \{\c, \po\}$, we have $\del = K_{1, 2^k-1}= \overline{L(K_2\cup K_{1, 2^k-1})}$. If $\del \in \{\dd , \ppo\}$, then note that $\del = (2^k-1)K_1= \overline{L(K_{1, 2^k-1})}$.
    \begin{figure}[ht]
    \centering
    \includegraphics[scale=.9]{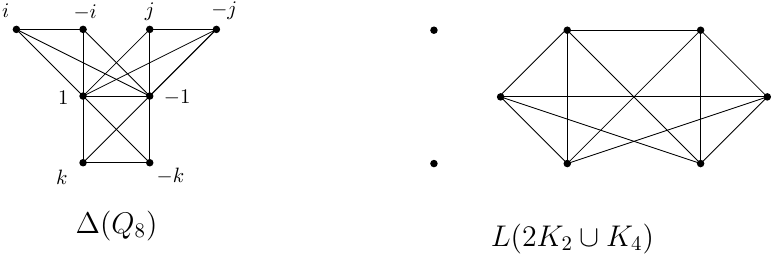}
    \caption{}
    \label{Q8}
\end{figure}
 \end{proof}
 \begin{proposition}{\label{cyclic proper complement}}
     Let $G$ be a finite cyclic group which is not a $p$-group. Then $\ppo$ is the complement of a line graph of some graph $\Gamma$ if and only if  $G\cong \mathbb{Z}_6$.
 \end{proposition}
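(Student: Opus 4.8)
The plan is to mirror the argument of Proposition~\ref{cyclic power complement}, invoking Lemma~\ref{complement induced lemma} together with the fact that $\overline{\Gamma_1}\cong K_3\cup K_1$ (the complement of the claw $K_{1,3}$), and then to adapt it to the proper power graph by keeping track of which elements survive the deletion of the dominating vertices. Write $G=\mathbb{Z}_n$ with $n=p_1^{\alpha_1}\cdots p_r^{\alpha_r}$, where $r\ge 2$ since $n$ is not a prime power. Because $G$ is cyclic and not a $p$-group, Theorem~\ref{dominating power graph} shows that the dominating vertices of $\po$ are exactly $e$ and the generators of $G$; hence $V(\ppo)=G\setminus(\{e\}\cup\{\text{generators}\})$, and the point I would use repeatedly is that any element whose order is a proper prime-power divisor of $n$ is never a generator, so it lies in $V(\ppo)$.

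For the forward direction, assume $\ppo$ is the complement of a line graph. First I would show that every prime divisor of $n$ is at most $3$: if some $p_i\ge 5$, then the unique subgroup of order $p_i$ supplies at least four elements of order $p_i$, all pairwise adjacent in $\po$ and all lying in $V(\ppo)$; choosing three of them together with an element $w$ of order $p_j$ (with $j\ne i$, which exists as $r\ge2$ and likewise lies in $V(\ppo)$) yields an induced $K_3\cup K_1\cong\overline{\Gamma_1}$, contradicting Lemma~\ref{complement induced lemma}. Thus $n=2^{\alpha}3^{\beta}$ with $\alpha,\beta\ge1$. Next I would rule out $\alpha\ge2$ and $\beta\ge2$: if $\alpha\ge2$, the cyclic Sylow $2$-subgroup has at least three non-identity elements, all of $2$-power order, hence pairwise adjacent and all in $V(\ppo)$; together with an element of order $3$ (adjacent to none of them, by Remark~\ref{remark}(ii)) they again induce $K_3\cup K_1\cong\overline{\Gamma_1}$, a contradiction, and the case $\beta\ge2$ is symmetric via $3$-power elements and an element of order $2$. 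Therefore $\alpha=\beta=1$, that is, $G\cong\mathbb{Z}_6$.

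For the converse, a direct computation gives $V(\ppo)=\mathbb{Z}_6\setminus\{\overline 0,\overline 1,\overline 5\}=\{\overline2,\overline3,\overline4\}$, with $\overline2\sim\overline4$ (both of order $3$) and $\overline3$ (of order $2$) isolated, so $\ppo\cong K_2\cup K_1$. Since this graph has only three vertices whereas each of the nine forbidden graphs $\overline{\Gamma_i}$ has at least four, no $\overline{\Gamma_i}$ can occur as an induced subgraph, and Lemma~\ref{complement induced lemma} shows $\ppo$ is the complement of a line graph; explicitly $\ppo=\overline{L(P_4)}$.

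I expect the only real care needed, and hence the main (minor) obstacle, is the bookkeeping that every element I select genuinely belongs to $V(\ppo)$ after the dominating vertices (the identity and the $\phi(n)$ generators) have been removed; the graph-theoretic core, namely producing the induced $\overline{\Gamma_1}=K_3\cup K_1$, is immediate once the right ``triangle-plus-isolated-vertex'' configuration is identified.
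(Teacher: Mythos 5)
Your proposal is correct and follows essentially the same route as the paper: the paper's forward direction is exactly the argument of Proposition~\ref{cyclic power complement} (producing induced copies of $\overline{\Gamma_1}\cong K_3\cup K_1$), which you reproduce while carefully verifying that the chosen elements survive the removal of the dominating vertices, and the converse is the same identification $\mathcal{P}^{**}(\mathbb{Z}_6)\cong K_2\cup K_1=\overline{L(P_4)}$. Your added observation that a three-vertex graph cannot contain any of the forbidden $\overline{\Gamma_i}$ is a harmless strengthening of the paper's bare computation.
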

 \begin{proof}
     First suppose that $\ppo$ is the complement of a line graph of some graph. Then in the similar lines of the proof of Proposition \ref{cyclic power complement}, we obtain $G\cong \mathbb{Z}_6$. Conversely, note that $\mathcal{P}^{**}(\mathbb{Z}_6)=\overline{L(P_4)}$. This completes our proof.
 \end{proof}
 
 Proposition \ref{cyclic power complement} together with Proposition \ref{complement all non-cyclic} yields Theorem \ref{line complement power graph}. On combining Proposition \ref{complement all non-cyclic} and Proposition \ref{cyclic proper complement}, we get Theorem \ref{line complement proper power graph}. If $G$ is a cyclic group, then $\c \cong K_n$ (cf. Theorem \ref{complete}). Observe that $K_n=\overline{L(nK_2)}$. Using these facts and Proposition \ref{complement all non-cyclic}, we obtain Theorem \ref{line complement EPG}. 




\section*{Declarations}

\textbf{Funding}: The first author gratefully acknowledge for providing financial support to CSIR  (09/719(0110)/2019-EMR-I) government of India. The second author wishes to  acknowledge the support of Core Research Grant (CRG/2022/001142) funded by  SERB.

\vspace{.3cm}
\textbf{Conflicts of interest/Competing interests}: There is no conflict of interest regarding the publishing of this paper. 

\vspace{.3cm}
\textbf{Availability of data and material (data transparency)}: Not applicable.

\vspace{.3cm}
\textbf{Code availability (software application or custom code)}: Not applicable.







\vspace{1cm}
\noindent
{\bf Parveen\textsuperscript{\normalfont 1}, \bf Jitender Kumar\textsuperscript{\normalfont 1}}
\bigskip

\noindent{\bf Addresses}:


\end{document}